\def\R{\mathbb R}
\def\de{\delta}
\def\ep{\varepsilon}
\def\var{\varphi}
\def\na{\nabla}
\def\Om{\Omega}  
\def\De{\Delta}      
\def\L{\mathcal L}
\def\wq{\infty}
\def\pa{\partial}
\def\loc{\text{\rm loc}}
\newcommand{\wto}{\rightharpoonup}                
\numberwithin{equation}{section}
\newtheorem{theorem}{Theorem}[section]
\newtheorem{lemma}[theorem]{Lemma}
\newtheorem{proposition}[theorem]{Proposition}
\newtheorem{remark}[theorem]{Remark}
\theoremstyle{definition}
\begin{document}
\title[A logarithmic Schr\"{o}dinger equations]{Positive multi-peak solutions for a logarithmic Schr\"{o}dinger equation}

 \author[P. Luo and Y. Niu]{Peng Luo and Yahui Niu}

\address[Peng Luo]{ School of Mathematics and Statistics and Hubei Key Laboratory of Mathematical Sciences,  Central China Normal University, Wuhan,  430079, P.R. China}
\email[]{pluo@mail.ccnu.edu.cn}
\address[Yahui Niu]{School of Mathematics and Statistics and Hubei Key Laboratory of Mathematical Sciences, Central China Normal University,  Wuhan,  430079, P.R. China}
\email[]{yahuniu@163.com}

\begin{abstract}
 In this paper, we consider the logarithmic Schr\"{o}dinger equation
 \begin{eqnarray*} -\ep^2\Delta u+V(x)u=u\log u^{2},\,\,\,u>0,  & \text{in }\mathbb{R}^{N}, \end{eqnarray*} where $N\geq3$, $\varepsilon>0$ is a small parameter. Under some assumptions on $V(x)$, we show the
   existence of positive multi-peak solutions by Lyapunov-Schmidt reduction. It seems to be the first time to study singularly perturbed logarithmic Schr\"{o}dinger problem by reduction.
   And here using a new norm is the crucial technique to overcome the difficulty caused by the  logarithmic nonlinearity.
   At the same time,
   we consider the local uniqueness of the multi-peak solutions by using a type of local Pohozaev identities.
\end{abstract}


\maketitle

{\small
\keywords {\noindent {\bf Keywords:} Logarithmic Schr\"{o}dinger equations;  $k$-peak solutions; Lyapunov-Schmidt reduction}
\smallskip
\newline
\subjclass{\noindent {\bf 2010 Mathematics Subject Classification:}  35B25 $\cdot$ 35J10 $\cdot$ 35J60}
}
\bigskip

\section{Introduction and main results}

In this paper, we consider the
following logarithmic Schr\"{o}dinger equations
\begin{eqnarray}
-\ep^2\Delta u+V(x)u=u\log u^{2},\,\, u>0, &\text{in}~\mathbb{R}^{N},\label{eq: Kirchhoff}
\end{eqnarray}
where $\ep>0$ is a parameter, $N\geq3$.

Eq. \eqref{eq: Kirchhoff} is closely related to the time-dependent logarithmic Schr\"{o}dinger equations
\begin{equation}\label{time-dependent equ}
  i\ep\partial_{t}u+\frac{\ep^{2}}{2}\triangle u-V(x)u+u\log u^{2}=0.
\end{equation}
Eq. \eqref{time-dependent equ} was proposed by Bialynicki-Birula and Mycielski \cite{Bialynicki-1976} as a model of nonlinear wave mechanics. This NLS Eq. \eqref{time-dependent equ} has wide applications in quantum optics \cite{Buljan-2003}, nuclear physics \cite{Hefter-1985},
geophysical applications of magma transport \cite{De Martino-2003}, effective quantum and gravity, theory of superfluidity, Bose-Einstein condensation and open quantum systems(see \cite{Zloshchastiev-2010,Zloshchastiev-2011} and the references therein).
For the existence, stability of standing waves and the Cauchy problem in a suitable functional framework about
Eq. \eqref{time-dependent equ}, we can refer to \cite{Ardila-2016,Ardila-2017,Cazenave-1983,Cazenave-1980,Cazenave-1982}.

 We call $u\in H^1(\R^{N})$ a (weak) solution to Eq. \eqref{eq: Kirchhoff} if  it holds that
\[
\ep^{2}\int_{\R^{N}}\nabla u\nabla\psi+\int_{\R^{N}}V(x)u\psi=\int_{\R^{N}}u\psi\log u^2,~\mbox{for any}~\psi\in H^1(\R^{N}).
\]

From a variational point of view, the search of nontrivial solutions to  \eqref{eq: Kirchhoff} can be
formally associated with the study of critical points of the functional on $H^{1}(\R^{N})$ defined by
\[
I_{\varepsilon}(u)
=\frac{\ep^{2}}{2}\int_{\R^{N}}|\nabla u|^{2}+\frac{1}{2}\int_{\R^{N}}(V(x)+1)u^{2}-\frac{1}{2}\int_{\R^{N}} u^{2}\log u^{2},\ \ u\in H^1(\R^{N}).
\]
By using the following standard logarithmic Sobolev inequality (see Theorem 8.14 in \cite{Lieb-2001})
\[
\int_{\R^{N}} u^{2}\log u^{2}\leq\frac{a^{2}}{\pi}\|\nabla u\|^{2}_{2}+\Big(\log\|u\|^{2}_{2}-N(1+\log a)\Big)\|u\|^{2}_{2},\ u\in H^{1}(\R^{N}),\ a>0,
\]
it is easy to see that $\displaystyle\int_{\R^{N}} u^{2}\log u^{2}<+\infty$ for all $u\in H^{1}(\R^{N})$, but there exists $u\in H^{1}(\R^{N})$ such that $\displaystyle\int_{\R^{N}} u^{2}\log u^{2}=-\infty$. For example, if $N=1$, $u$ is a smooth function satisfying
\[
\begin{aligned}
u(x)=\left\{
       \begin{array}{ll}
         \big(\sqrt{x}\log x\big)^{-1}, & x\geq3; \\
         0, & x\leq2.
       \end{array}
     \right.
\end{aligned}
\]
One can verify directly that $u\in H^{1}(\R^{N})$ and $\displaystyle\int_{\R^{N}} u^{2}\log u^{2}=-\infty$.
 Thus, in general, $I_{\varepsilon}(u)$ fails to be finite and $C^{1}$ smooth on $H^{1}(\R^{N})$.

Due to this loss of smoothness,  the classical critical point theory cannot be applied for $I_{\varepsilon}$. In order to study existence of solutions to logarithmic Schr\"{o}dinger equation, several approaches were used so far in the literature as far as we know.
For  problem \eqref{eq: Kirchhoff} with $\varepsilon=1$, Cazenave \cite{Cazenave-1983} worked in a suitable Banach space $W$ endowed with a Luxemburg type norm in order to make the functional $I_{1} : W \rightarrow \R$ well defined and  $C^{1}$ smooth. In recent years, non-smooth critical point theory was applied , such as Squassina and Szulkin \cite{Squassina-2015,Squassina-2017} studied the following logarithmic Schr\"{o}dinger equation
\begin{equation}\label{double potential eq}
  -\Delta u+V(x)u=Q(x)u\log u^{2}, \ \ \text{in}\ \R^{N},
\end{equation}
where $V (x)$ and $Q(x)$ are spatially periodic. They showed the existence of ground state and infinitely many possibly sign-changing solutions, which are geometrically distinct under $\mathbb{Z}^{N}$-action. See also \cite{dAvenia-2014,dAvenia-2015,Ji-2016} for more non-smooth variational framework to logarithmic Schr\"{o}dinger equation. At the same time,
 by using penalization technique, Tanaka and Zhang \cite{Tanaka-2017}  obtained infinitely many multi-bump geometrically distinct positive solutions of (\ref{double potential eq}).
We also refer to  \cite{Guerrero-2010} for the approach of using penalization.
Another interesting work concerning with Eq. \eqref{eq: Kirchhoff} with $\varepsilon=1$ is \cite{Shuai-2019},  by using the constrained minimization method, which avoided using Luxemburg type norm, non-smooth critical point theory
and penalization technique. Here Shuai \cite{Shuai-2019}  proved directly the minimizers of $I_{1}(u)$ on a Nehari
set or a sign changing Nehari set are indeed solutions by direction derivative.

Recently,  problem \eqref{eq: Kirchhoff}  was studied in \cite{Alves-2018} if  $V(x)$ is a continuous function with a global minimum. By using variational method developed by Szulkin in \cite{Szulkin-1986} for functionals which are sum of a ${C}^{1}$ functional with a convex
lower semi-continuous functional, Alves et al in \cite{Alves-2018} proved, for $\ep>0$ small enough, the existence of positive solutions and concentration around of a minimum point of $V(x)$.
Later, Alves and Ji in \cite{Alves-2019} studied the existence of multiple solutions for problem \eqref{eq: Kirchhoff} under the following conditions on potential $V(x)$:

\smallskip

\noindent(I). $V :\R^{N} \rightarrow \R$ is a continuous function such that
\[
\lim_{|x|\rightarrow\infty}V(x)=V_{\infty} ~\mbox{and}~0<V(x) < V _{\infty}~\mbox{for any}~x\in\R^{N}.
\]
\noindent(II). There exist $l$ points $z_{1},\cdots,\ z_{l}$ in  $\R^{N}$  such that
\[
1=V(z_{i})=\min_{x\in\R^{N}}V(x),\ \  \text{for}\ 1\leq i\leq l.
\]
They proved that for $\ep>0$ small enough, the "shape" of the graph of the function $V$ affects the number of nontrivial solutions,  specifically, Eq. \eqref{eq: Kirchhoff} has at least $l$ positive solutions for $\ep$ small enough.

From the above results, we summarize that all existing results on logarithmic Schr\"{o}din-ger equations are obtained by variational methods. In this paper, we intend to study logarithmic Schr\"{o}dinger  equation \eqref{eq: Kirchhoff} by Lyapunov-Schmidt reduction.

\smallskip

\noindent More precisely, we suppose that $V(x)\in C^{1}:\R^{N}\to\R$
satisfies the following conditions:

\smallskip

\noindent $(V_{1}). $ $V(x)\in L^{\wq}(\R^{N})$ and $0<\inf_{\R^{N}}V(x)\le\sup_{\R^{N}}V(x)<\wq$;

\smallskip

\noindent $(V_{2}). $ There exist $k$ points $\xi_{1},\cdots,\xi_k$ such that
$$\nabla V(\xi_{j})=0,~det\Big(\big(\frac{\partial^{2}V(\xi_{j})}{\partial\xi_{j,i}\partial\xi_{j,l}}\big)_{1\leq i,l\leq N}\Big)\neq 0,~\mbox{for any}~ j=1,\cdots,k.$$

\smallskip

Here we also give the definition of $k$-peak solutions of Eq.
\eqref{eq: Kirchhoff} as usual.

\smallskip

\noindent\textbf{Definition A.}\emph{ Let $k\in \mathbb{N}$ and $\xi_{j}\in \mathbb{R}^{N}$ with $j=1,\cdots,k$. We say that $u_{\ep}\in H^{1}(\R^{N})$
is a $k$-peak solution of \eqref{eq: Kirchhoff} concentrated at $\xi_{1},\cdots,\xi_{k}$ if}

\smallskip

\noindent \emph{\textup{(i)} $u_{\ep}$ has $k$ local maximum points $y_{\ep,j}\in\R^{N}$,
$j=1,\ldots,k$, satisfying
\[
y_{\ep,j}\to \xi_{j},~\mbox{as}~\ep\to 0.
\]
\noindent \textup{(ii)} For any given $\tau>0$, there exists $R\gg1$, such that
\begin{eqnarray*}
|u_{\ep}(x)|\le\tau,  & \text{for}~x\in\R^{N}\backslash\displaystyle\bigcup_{j=1}^{k}B_{R\ep}(y_{\ep,j});
\end{eqnarray*}
\noindent \textup{(iii)} There exists $C>0$ such that
\[
\int_{\R^{N}}(\ep^{2}|\na u_{\ep}|^{2}+u_{\ep}^{2})\le C\ep^{N}.
\] }

Our first result concerning on the existence of  $k$-peak solutions to \eqref{eq: Kirchhoff} is as follows.

\begin{theorem} \label{thm: main reuslt-existence}
Assume that $N\geq3$, $(V_{1})$ and $(V_{2})$ holds. Then, Eq.
\eqref{eq: Kirchhoff} has a $k$-peak solution concentrated at $\xi_{1},\cdots,\xi_{k}$ for $\ep>0$ sufficiently small.\end{theorem}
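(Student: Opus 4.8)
\medskip
\noindent\textbf{Strategy of the proof.}
The plan is to carry out a Lyapunov--Schmidt reduction built around the explicit profile of the limit equation. First I would rescale, putting $u(x)=v(x/\ep)$, which turns \eqref{eq: Kirchhoff} into
\[
-\Delta v+V(\ep y)\,v=v\log v^{2},\qquad v>0,\ \ y\in\R^{N}.
\]
For a constant $a>0$ the frozen equation $-\Delta U+aU=U\log U^{2}$ has the unique (up to translation) positive solution, the \emph{Gausson}
\[
U_{a}(y)=e^{(N+a)/2}\,e^{-|y|^{2}/2},
\]
as one checks from $\Delta U_{a}=(|y|^{2}-N)U_{a}$; its Gaussian decay is what will eventually force properties (ii) and (iii) of Definition~A. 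I then look for a solution of the form $v=W_{\ep,z}+\phi$, where
\[
W_{\ep,z}(y)=\sum_{j=1}^{k}U_{V(z_{j})}\!\Big(y-\tfrac{z_{j}}{\ep}\Big),
\]
the parameter $z=(z_{1},\dots,z_{k})$ ranges over a small neighbourhood of $(\xi_{1},\dots,\xi_{k})$, and $\phi$ is a small correction; because the centres $z_{j}/\ep$ are at mutual distance $\sim 1/\ep$, the bumps interact only through exponentially small terms.

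\emph{The new norm.} Linearising the rescaled equation at $U_{a}$ produces the operator $L_{a}=-\Delta+(|y|^{2}-N-2)$, since $\log U_{a}^{2}=(N+a)-|y|^{2}$; this is a shifted harmonic oscillator, whose spectrum is $\{-2,0,2,4,\dots\}$, so that $L_{a}$ has one negative eigenvalue and kernel exactly $\span\{\pa_{y_{i}}U_{a}:1\le i\le N\}$ --- the non-degeneracy on which the whole reduction rests. The essential observation is that $\log W_{\ep,z}^{2}$ grows like $-\,\mathrm{dist}(y,\{z_{l}/\ep\})^{2}$, so the linearised problem does not live naturally in $H^{1}(\R^{N})$ --- on which $I_{\ep}$ is not even $C^{1}$ and $t\mapsto t\log t^{2}$ is not Lipschitz near $0$ --- but in the Hilbert space $H_{\ep,z}$ equipped with the weighted norm
\[
\|\phi\|_{\ep,z}^{2}=\int_{\R^{N}}\Big(|\na\phi|^{2}+\big(1+|\log W_{\ep,z}^{2}|\big)\,\phi^{2}\Big),
\]
which is comparable to $\int_{\R^{N}}\big(|\na\phi|^{2}+(1+\min_{l}|y-z_{l}/\ep|^{2})\,\phi^{2}\big)$. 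In this norm the quadratic form of the linearisation is coercive off the kernels, and the quantities $\int W_{\ep,z}^{2}\log W_{\ep,z}^{2}$, $\int\phi^{2}\log W_{\ep,z}^{2}$, as well as the nonlinear remainder $(W_{\ep,z}+\phi)\log(W_{\ep,z}+\phi)^{2}-W_{\ep,z}\log W_{\ep,z}^{2}-(\log W_{\ep,z}^{2}+2)\phi$, become controllable because the Gaussian weight absorbs the logarithm. This is the device announced in the abstract, and engineering it correctly is the main point of the argument.

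\emph{Reduction.} With $E_{\ep,z}=\{\phi\in H_{\ep,z}:\langle\phi,\pa_{y_{i}}U_{V(z_{j})}(\cdot-z_{j}/\ep)\rangle=0\ \text{for all }i,j\}$, the non-degeneracy of each $L_{a}$ together with the separation of the bumps gives that the projection of the linearised operator onto $E_{\ep,z}$ is invertible with inverse bounded uniformly in small $\ep$ and in $z$. Projecting the equation onto $E_{\ep,z}$ and estimating the consistency error $-\Delta W_{\ep,z}+V(\ep y)W_{\ep,z}-W_{\ep,z}\log W_{\ep,z}^{2}$ --- which is small because $V(\ep y)-V(z_{j})=O(\ep)$ on the bulk of the $j$-th bump, using $V\in C^{1}$ --- the contraction mapping principle yields a unique small $\phi_{\ep,z}\in E_{\ep,z}$, depending in $C^{1}$ fashion on $z$. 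It then remains to make the finite-dimensional part of the equation vanish, i.e.\ to find a critical point of the reduced functional $\mathcal{J}_{\ep}(z)=I_{\ep}\big((W_{\ep,z}+\phi_{\ep,z})(\cdot/\ep)\big)$ near $(\xi_{1},\dots,\xi_{k})$. A direct expansion gives, in the $C^{1}$ topology in $z$,
\[
\mathcal{J}_{\ep}(z)=\ep^{N}\Big(c_{0}\sum_{j=1}^{k}e^{V(z_{j})}+O(\ep^{2})\Big)+o(\ep^{N}),\qquad c_{0}>0,
\]
so that $\nabla_{z_{j}}\mathcal{J}_{\ep}(z)=c_{0}\ep^{N}e^{V(z_{j})}\nabla V(z_{j})+o(\ep^{N})$. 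Since $\nabla V(\xi_{j})=0$ and $\det D^{2}V(\xi_{j})\neq0$ by $(V_{2})$, the point $(\xi_{1},\dots,\xi_{k})$ is a non-degenerate zero of $z\mapsto(e^{V(z_{j})}\nabla V(z_{j}))_{j}$, so a Brouwer-degree argument (or the implicit function theorem applied to $\nabla_{z}\mathcal{J}_{\ep}=0$) produces, for $\ep$ small, a critical point $z^{\ep}\to(\xi_{1},\dots,\xi_{k})$. By the standard reduction lemma, $u_{\ep}(x)=(W_{\ep,z^{\ep}}+\phi_{\ep,z^{\ep}})(x/\ep)$ is then a genuine solution of \eqref{eq: Kirchhoff}; the maximum principle gives $u_{\ep}>0$, and the Gaussian decay of $U_{a}$ together with $\|\phi_{\ep,z^{\ep}}\|_{\ep,z^{\ep}}=o(1)$ gives $k$ maxima near the $\xi_{j}$, the smallness of $u_{\ep}$ outside $\bigcup_{j}B_{R\ep}(y_{\ep,j})$, and the bound $\int_{\R^{N}}(\ep^{2}|\na u_{\ep}|^{2}+u_{\ep}^{2})\le C\ep^{N}$ --- precisely (i), (ii), (iii) of Definition~A.

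\emph{Main difficulty.} The delicate step is the construction of the weighted space: $\|\cdot\|_{\ep,z}$ must be chosen so that, simultaneously, $I_{\ep}$ and its first two derivatives are well defined and continuous on $W_{\ep,z}+E_{\ep,z}$ in spite of the logarithm (which equals $-\infty$ at the zeros of $v$ and is not Lipschitz near $0$), and the linearised operator --- a singular perturbation of a harmonic oscillator --- remains Fredholm and uniformly invertible on $E_{\ep,z}$ as $\ep\to0$. Once the functional framework is in place, the invertibility, the contraction estimates for $\phi_{\ep,z}$ and the energy expansion all follow from now-standard perturbation techniques; but everything hinges on getting this norm right, which is exactly what distinguishes the argument from the familiar power-nonlinearity reductions.
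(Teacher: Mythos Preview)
Your overall architecture --- Gausson ansatz, Lyapunov--Schmidt reduction, then a finite-dimensional problem solved via the non-degeneracy in $(V_2)$ --- matches the paper's. The genuine discrepancy, and the place where your proposal would likely break, is the choice of norm. You propose the weighted Hilbert norm $\|\phi\|_{\ep,z}^{2}=\int|\na\phi|^{2}+(1+|\log W_{\ep,z}^{2}|)\phi^{2}$, which is roughly $\int|\na\phi|^{2}+(1+\min_{l}|y-z_{l}/\ep|^{2})\phi^{2}$; this controls a polynomial-in-$|y|$ moment of $\phi$ but does \emph{not} force Gaussian decay. The nonlinear remainder is $R(\phi)=O(\phi^{2}/W)$, and since $W^{-1}\sim e^{|y|^{2}/2}$ this is uncontrollable unless $|\phi|\lesssim W$ pointwise --- a bound your norm does not deliver (take for instance $\phi\sim e^{-|y|}$, which lies in your space but makes $\phi^{2}/W$ blow up). The paper resolves this by introducing, in addition to the $H^{1}$-type norm $\|\cdot\|_{\ep}$, a weighted $L^{\infty}$ norm
\[
\|\phi\|_{*}=\sup_{x}\Big(\sum_{j}e^{-|x-y_{j}|^{2}/(2\ep^{2})}\Big)^{-1}|\phi(x)|,
\]
which \emph{encodes} $|\phi|\le\|\phi\|_{*}\cdot W$ pointwise, and then runs the contraction in the set $\{\phi:\|\phi\|_{\ep}\le\ep^{N/2+1-\tau},\ \|\phi\|_{*}\le|\ln\ep|^{-(1-\theta)}\}$. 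Showing that the fixed-point map preserves the $\|\cdot\|_{*}$-constraint is the hard step and takes a separate Green's-function argument (their Proposition~3.2); your proposal has no analogue of this.

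A secondary difference: for the finite-dimensional step you expand the reduced energy $\mathcal{J}_{\ep}$ and invoke Brouwer degree, whereas the paper avoids the energy altogether --- precisely because $I_{\ep}$ is not $C^{1}$ without pointwise control of $\phi$ --- and instead computes the orthogonality residuals $\langle L_{\ep}\varphi_{\ep}-l_{\ep}-R_{\ep}(\varphi_{\ep}),\,\partial_{x_{i}}U_{\ep,y_{j}}\rangle$ directly, obtaining $\partial_{x_{i}}V(y_{j})=O(|\ln\ep|^{-(1-\theta)})$ and solving via $(V_{2})$. Your route would work once the $\|\cdot\|_{*}$-control is in place (since then $W+\phi>0$ and the functional is smooth along the reduced manifold), but as written it presupposes the very regularity that the norm issue leaves open.
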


Now we outline the main ideas and difficulties  in the proof of Theorem \ref{thm: main reuslt-existence}.
The basic idea is to use the unique positive solution to the limiting equation of \eqref{eq: Kirchhoff} as a building block to construct solutions for \eqref{eq: Kirchhoff}.
We first reduce the problem to a finite dimensional one by  Lyapunov-Schmidt reduction.
Since the singularity of the nonlinear term $u\log u^{2}$, traditional reduction method (for example refer to \cite{Bahri-1989}) can't be used directly, we make a few modifications.


Here we introduce some notations. Denote
\[
\langle u,v\rangle_{\ep}=\int_{\R^{N}}\Big(\ep^{2}\na u\cdot\na v+(V(x)+1)uv\Big),~H_{\ep}=\big\{u\in H^{1}(\R^{N}):\|u\|_{\ep}:=\langle u,u\rangle_{\ep}^{1/2}<\wq\big\}.
\]
And then we will construct $k$-peak solutions of Eq. \eqref{eq: Kirchhoff} of the forms
\[
u_{\ep}=\sum_{j=1}^{k}U_{\ep,y_{j}}+\var,
\]
where $U_{\ep,y_{j}}$ is the solution of limiting equation of \eqref{eq: Kirchhoff} which will be defined later.
So, Eq. \eqref{eq: Kirchhoff} can be rewritten as the following equation about $\var$:
\begin{equation}\label{rewrite equation}
\begin{cases}
L_{\ep}\var=l_{\ep}+R_{\ep}(\var), \ \ x\in\R^{N},\\
\var\in H^{1}(\R^{N}),
\end{cases}
\end{equation}
where the linear operator $L_{\ep}$,  the  terms $l_{\ep}$ and  $R_{\ep}(\var)$ are be defined in Section 2 Later.

In the traditional calculations, under the general $H^{1}(\R^{N})$ norm, we find
\begin{equation}\label{R}
  \|R_{\ep}(\var)\|_{\ep}=o(\|\var\|_{\ep}).
\end{equation}
Then, for $\var$ small, \eqref{rewrite equation} can be seen as a perturbation of the following problem
 \begin{equation}\label{linear equation}
  \begin{cases}
L_{\ep}\var=l_{\ep}, \ \ \ x\in\R^{N},\\
\var_{\ep}\in H^{1}(\R^{N}),
\end{cases}
 \end{equation}
Suppose that $L_{\ep}$ is a bounded invertible map in some suitable space, then \eqref{linear equation} has a solution
$\var_{\ep}=L_{\ep}^{-1}l_{\ep}$.  So we can use the contraction mapping theorem in the following small ball
\[
\left\{\var\in H^{1}(\R^{N}): \|\var\|_{\ep}\leq \ep^{\tau}\|l_{\ep}\|_{\ep}, 0<\tau<1 \right\}
\]
to solve \eqref{rewrite equation}.
While, for the logarithmic Schr\"{o}dinger equations \eqref{eq: Kirchhoff},
\begin{equation}\label{R1}
 |R_{\ep}(\var)|=O\Big(\var^{2}\big(\sum^k_{j=1}U_{\ep,y_{j}}\big)^{-1}\Big).
\end{equation}
In the general $H^{1}(\R^{N})$ space, $\|R_{\ep}(\var)\|_{\ep}$ isn't a higher order small term of $\|\var\|_{\ep}$, that is, \eqref{R} doesn't hold.
To overcome this difficulty, we define a new type of norm
\begin{equation}\label{new norm}
  \|\var\|_{*}=\sup_{x\in\R^{N}}\Big(\sum_{j=1}^{k}e^{-\frac{|x-y_{j}|^{2}}{2\ep^{2}}}\Big)^{-1}|\var(x)|,
\end{equation}
where $\varphi\in H_{\varepsilon}$, $y_{j}\in B_{\delta}(\xi_{j})$, and restrict $\var$ in the the following space
\begin{equation}\label{new space}
\wp_{\ep}:=\left\{\var\in H_{\ep}: \ \|\var\|_{*}\leq \frac{1}{|\ln\ep|^{1-\theta}}\right\},~\mbox{with some small}~\theta>0.
\end{equation}
Then we conduct the contraction mapping in a small ball $S$\big(see \eqref{SS}\big) endowed with the norm $\|\cdot\|_{*}$.

After this reduction progress, we only need to solve a finite dimensional problem about $y_{j}$. Different from the general minimum or maximum progress, inspired by  \cite{Peng-2018}, we use the Pohozaev identity of \eqref{eq: Kirchhoff} to ensure the existence of $y_{j}$. And this methods allow the peak points $y_{j}$ of $u_{\ep}$ can be the non-degenerate critical points of $V(x)$, not just minimum points or maximum points of $V(x)$.

We also consider  the local uniqueness of the $k$-peak solution of \eqref{eq: Kirchhoff}.

\begin{theorem}\label{thm: uniqueness}
  Assume that
$(V_{1})$ and $(V_{2})$ hold. If $
u^{(i)}_{\ep}$ with $i=1,2$ are the positive solution of  \eqref{eq: Kirchhoff}
 concentrated at $\xi_{1},\cdots,\xi_{k}$.
Then
$u_{\ep}^{(1)}\equiv u_{\ep}^{(2)}$ for $\ep$ sufficiently small.
\end{theorem}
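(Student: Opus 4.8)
The plan is a contradiction argument combining a blow-up analysis of the normalized difference of the two solutions with local Pohozaev identities, following the strategy of \cite{Peng-2018}. Assume that along some sequence $\ep\to 0$ one has $u^{(1)}_\ep\not\equiv u^{(2)}_\ep$. By Theorem \ref{thm: main reuslt-existence} and the reduction carried out in its proof, each solution can be written as $u^{(i)}_\ep=\sum_{j=1}^{k}U_{\ep,y^{(i)}_j}+\var^{(i)}_\ep$, with $y^{(i)}_j\to\xi_j$ and $\var^{(i)}_\ep\in\wp_\ep$, so that $\|\var^{(i)}_\ep\|_*\to 0$. A preliminary step is to locate the peaks sharply: multiplying $-\ep^2\De u^{(i)}_\ep+Vu^{(i)}_\ep=u^{(i)}_\ep\log(u^{(i)}_\ep)^2$ by $\pa_{x_l}u^{(i)}_\ep$ and integrating over $B_\de(\xi_j)$ gives, since the boundary terms are of order $e^{-c/\ep^2}$, an identity whose interior part is $\tfrac12\int_{B_\de(\xi_j)}\pa_{x_l}V\,(u^{(i)}_\ep)^2$; inserting the ansatz and expanding $\pa_{x_l}V$ at $\xi_j$ (using $\na V(\xi_j)=0$ and $\det(D^2V(\xi_j))\neq0$) forces $y^{(i)}_j-\xi_j=o(\ep)$, in particular $|y^{(1)}_j-y^{(2)}_j|=o(\ep)$, so after rescaling both solutions concentrate around the same center at each site.

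Set $y_j:=y^{(1)}_j$, $\rho_\ep:=\|u^{(1)}_\ep-u^{(2)}_\ep\|_*\in(0,\wq)$ and $\eta_\ep:=\rho_\ep^{-1}(u^{(1)}_\ep-u^{(2)}_\ep)$, so that $\|\eta_\ep\|_*=1$ and, by definition of the $*$-norm, $|\eta_\ep(x)|\le\sum_{j=1}^{k}e^{-|x-y_j|^2/(2\ep^2)}$. Since $s\log s^2$ has derivative $\log s^2+2$, subtracting the two equations yields the linear equation
\[
-\ep^2\De\eta_\ep+V(x)\,\eta_\ep=D_\ep(x)\,\eta_\ep,\qquad D_\ep(x):=\int_0^1\Big(\log\big(t u^{(1)}_\ep+(1-t)u^{(2)}_\ep\big)^2+2\Big)\,dt .
\]
Near each $\xi_j$ one has $D_\ep=\log U_{\ep,\xi_j}^2+2+o(1)$, while away from all peaks $D_\ep\to-\wq$ at a Gaussian rate; hence $V-D_\ep$ is a large positive potential there, and, together with interior elliptic estimates for the displayed equation, this provides the companion gradient bound $\ep|\na\eta_\ep(x)|\le C\sum_{j=1}^k e^{-|x-y_j|^2/(4\ep^2)}$. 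Rescaling $\eta_{\ep,j}(z):=\eta_\ep(\xi_j+\ep z)$ and using these weighted bounds for tail control, we pass (along a subsequence) to a limit $\eta_{\ep,j}\to\eta_{0,j}$ in $C^1_{\loc}(\R^N)$, where $\eta_{0,j}$ solves the equation linearized at the Gausson $U_0(z)=e^{(N+V(\xi_j))/2}e^{-|z|^2/2}$. The linearized operator equals $-\De+(|z|^2-N-2)$, a shifted harmonic oscillator whose kernel is exactly $\span\{\pa_{z_l}U_0:1\le l\le N\}$; therefore $\eta_{0,j}=\sum_{l=1}^{N}b^j_l\,\pa_{z_l}U_0$ for some constants $b^j_l$.

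The core of the argument is to show $b^j_l=0$ for all $j,l$, via the local Pohozaev identity for the difference. Writing the $\pa_{x_l}$-identity above for $u^{(1)}_\ep$ and $u^{(2)}_\ep$ on $B_\de(\xi_j)$ and subtracting,
\[
\tfrac12\int_{B_\de(\xi_j)}\pa_{x_l}V\,\big(u^{(1)}_\ep+u^{(2)}_\ep\big)\big(u^{(1)}_\ep-u^{(2)}_\ep\big)=\big(\text{boundary terms on }\pa B_\de(\xi_j)\big).
\]
Expressed through $\eta_\ep$ and $\na\eta_\ep$ the boundary terms are $O(\rho_\ep e^{-c/\ep^2})$, hence negligible next to $\rho_\ep\ep^{N+1}$. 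On the left, insert $u^{(1)}_\ep-u^{(2)}_\ep=\rho_\ep\eta_\ep$ and $u^{(1)}_\ep+u^{(2)}_\ep=2U_{\ep,\xi_j}(1+o(1))$ on $B_\de(\xi_j)$, Taylor-expand $\pa_{x_l}V$ at $\xi_j$ with $\na V(\xi_j)=0$, rescale $x=\xi_j+\ep z$, and divide by $\rho_\ep\ep^{N+1}$; letting $\ep\to0$ gives $D^2V(\xi_j)\,b^j=0$ with $b^j=(b^j_1,\dots,b^j_N)$ (the relevant Gaussian moments being nonzero). Since $\det(D^2V(\xi_j))\neq0$ by $(V_2)$, all $b^j_l$ vanish. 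Consequently the component of $\eta_\ep$ along $\span\{\pa_{x_l}U_{\ep,y_j}\}$ is $o(1)$ in $\|\cdot\|_*$; decomposing $\eta_\ep$ into this component and a part $*$-orthogonal to it and feeding the linear equation for $\eta_\ep$ into the invertibility of $L_\ep$ on the orthogonal complement established in the proof of Theorem \ref{thm: main reuslt-existence}, we obtain $\|\eta_\ep\|_*=o(1)$, contradicting $\|\eta_\ep\|_*=1$. Hence $u^{(1)}_\ep\equiv u^{(2)}_\ep$ for $\ep$ small.

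I expect the main difficulty to be the analytic control of $\eta_\ep$ underpinning both the blow-up and the Pohozaev estimates: establishing the weighted pointwise and gradient bounds and the $C^1_{\loc}$ compactness for the linear equation $-\ep^2\De\eta_\ep+V\eta_\ep=D_\ep\eta_\ep$ whose zeroth-order coefficient $D_\ep$ is \emph{unbounded} because of the logarithm (this is the linearized counterpart of the obstruction \eqref{R1} that forced the introduction of $\|\cdot\|_*$), and in making the preliminary location estimate $y^{(i)}_j-\xi_j=o(\ep)$ precise enough, which in turn relies on the refined estimates for $\var^{(i)}_\ep$ coming from Section 2.
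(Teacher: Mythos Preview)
Your overall architecture matches the paper's: assume two distinct solutions, normalize the difference, blow up near each peak, identify the limit as an element of the kernel of the linearized operator at the Gausson, use the subtracted local Pohozaev identity together with $(V_2)$ to kill the kernel coefficients, and then derive a contradiction. The preliminary localization $y^{(i)}_j-\xi_j=o(\ep)$ via the single-equation Pohozaev identity is exactly Proposition~\ref{prop: rate of concentration}.

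Two implementation choices diverge from the paper and the second one creates a gap. First, you normalize by $\|\cdot\|_*$ whereas the paper normalizes by $\|\cdot\|_{L^\infty}$. With the $L^\infty$ normalization the target contradiction is $|\eta_\ep|=o(1)$ pointwise, which splits cleanly into ``near peaks'' (handled by $C^1_{\loc}$ convergence plus $d_{\beta,j}=0$) and ``far from peaks'' (handled by the maximum principle, exactly as referenced in the last paragraph of the proof of Theorem~\ref{thm: uniqueness} via \cite{Cao-Li-Luo-2015}). With your $*$-normalization the target is the much more delicate statement $|\eta_\ep(x)|=o(1)\sum_j e^{-|x-y_j|^2/(2\ep^2)}$ \emph{uniformly in $x$}; knowing $\eta_{\ep,j}\to 0$ in $C^1_{\loc}$ only controls the ratio on balls $B_{R\ep}(y_j)$ for fixed $R$, and the supremum in $\|\cdot\|_*$ may well be attained at scales $|x-y_j|\sim \ep\sqrt{|\ln\ep|}$ or beyond.

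Second, your closing step appeals to ``the invertibility of $L_\ep$ on the orthogonal complement established in the proof of Theorem~\ref{thm: main reuslt-existence}''. What is actually established there is Proposition~\ref{inversibility}, an $\|\cdot\|_\ep$ (i.e.\ $H^1$) coercivity on $E_{\ep,y}$, not a $\|\cdot\|_*$ bound; Proposition~\ref{contract lemma} upgrades $\|\cdot\|_\ep$ to $\|\cdot\|_*$ only for solutions of the specific inhomogeneous problem $P_\ep L_\ep u=l_\ep+R_\ep(\var)$ with the particular right-hand side. The equation for $\eta_\ep$ has the form $L_\ep\eta_\ep=\big(D_\ep-2\log(\sum_j U_{\ep,y_j})-2\big)\eta_\ep$, and you would need a separate argument (analogous to, but not the same as, Proposition~\ref{contract lemma}) to get $\|\eta_\ep^\perp\|_*=o(1)$ from this. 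The paper bypasses this entirely: with $L^\infty$ normalization, the exterior is handled directly by the maximum principle (Lemma~\ref{minus lemma} plus the argument at the very end of Section~\ref{Local uniqueness results}), which exploits that $V-D_\ep\to+\infty$ off the peaks. If you keep your $*$-normalization, you should replace the invertibility appeal by a barrier/maximum-principle argument in $\R^N\setminus\bigcup_j B_{R\ep}(y_j)$ tailored to the Gaussian weight; otherwise switch to $L^\infty$ normalization as in the paper.
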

\begin{remark}
 In Theorem \ref{thm: uniqueness} with  $k=1$, we find the uniqueness result about single-peak solution concentrated at  a non-degenerate critical point of $V(x)$.
On the other hand,  the ground state of \eqref{eq: Kirchhoff} must concentrate at a minimum point of $V(x)$. So if we impose an other condition on $V(x)$ as follows:
\begin{equation*}
V(\xi_1)=\inf_{\R^N}V(x)~\mbox{and}~V(x)>V(\xi_1)~\mbox{for any}~x\in \R^N\setminus \{\xi_1\}.
\end{equation*}
Then the ground state of \eqref{eq: Kirchhoff} is unique by Theorem \ref{thm: uniqueness}.
\end{remark}
We will prove Theorem \ref{thm: uniqueness} inspired by \cite{Cao-Li-Luo-2015}. Let $u_{\ep}^{(l)}$ with $l=1,2$ be two different positive solutions concentrated at $k$ points $\xi_{1},\cdots,\xi_{k}$.  Set
\[
\eta_{\ep}=\frac{u_{\ep}^{(1)}-u_{\ep}^{(2)}}{\|u_{\ep}^{(1)}-u_{\ep}^{(2)}\|_{L^{\wq}(\R^{N})}}.
\]
Then we prove $\|\eta_{\ep}\|_{L^{\wq}(\R^{N})}=o(1)$ to obtain a contradiction with  $\|\eta_{\ep}\|_{L^{\wq}(\R^{N})}=1$. We will use the blow-up analysis and local Pohozaev
type of identities to deal with the
estimate near the concentrated points. But we will use the
maximum principle for the calculations away from the concentrated points.

In this paper, we write $\int u$ to denote Lebesgue integrals over
$\R^{N}$, unless otherwise stated, $\|u\|_{p}=\big(\int u^{p}\big)^{\frac{1}{p}}$ and $\langle u,v\rangle=\int uv$.
 We will use $C$ to denote various positive
constants, and $O(t)$, $o(t)$ and $o(1)$ to mean $|O(t)|\le C|t|$, $o(t)/t\to0$
as $t\to0$ and $o(1)\rightarrow0$ as $\ep\rightarrow 0$, respectively.

The paper is organized as follows. In Section \ref{sec: form and location}
we give some notations and preliminary estimates.
 In Section \ref{Finite dimensional reduction}, we carry out the reduction argument.
In Sections \ref{Proof of the first Theorem} and \ref{Local uniqueness results}, we will complete the proofs of Theorems \ref{thm: main reuslt-existence} and  \ref{thm: uniqueness} correspondingly.

\section{Preliminaries \label{sec: form and location}}
From \cite{dAvenia-2014}, we know that $U(x):=e^{\frac{w+N-|x|^{2}}{2}}$ is the unique positive solution of the following problem
\begin{equation*}
-\De u+wu=u\log u^2, \ \  u>0, \ \  \text{in}~\R^{N}.
\end{equation*}
Furthermore, it is  non-degenerate in
$H^{1}(\R^{N})$ in the sense that
\[
\operatorname{Ker}\L={\rm span}\left\{ \frac{\pa U}{\pa x_{j}}:1\le j\le N\right\} ,
\]
where the linearized operator $\L:H^{1}(\R^{N})\to H^{1}(\R^{N})$ is defined as
\begin{equation*}
  \L\var\equiv-\De\var+(\omega-2-2\log U)\var,
~\mbox{for}~\var\in H^{1}(\R^{N}).
\end{equation*}
For any $y_{j}\in \R^{N}$ with $j=1,\cdots,k$, we
denote \[
U_{\ep,y_{j}}(x)=e^{\frac{V(y_{j})+N}{2}}e^{-\frac{|x-y_{j}|^{2}}{2\ep^{2}}},
\]
which is the solution of
\begin{equation}
-\ep^2\De U_{\ep,y_{j}}(x)+V(y_{j})U_{\ep,y_{j}}(x)=U_{\ep,y_{j}}(x)\log U_{\ep,y_{j}}^2(x)\ \ \text{in }\R^{N}.
\label{eq: our main part}
\end{equation}
The linearized operator of (\ref{eq: our main part}) at $U_{\ep,y_{j}}(x)$ is $\L_{\ep}:\equiv-\ep^{2}\De+V(y_{j})-2(\log U_{\ep,y_{j}}+1)$, whose kernel is
\[
K_{\ep}=span\big\{\frac{\partial U_{\ep,y_{j}}}{\partial x_{i}},\ i=1,\cdots,N,\ j=1,\cdots,k\big\}.
\]
We note $y=(y_1,\cdots,y_k)$ and
\[
E_{\ep,y}=\left\{v\in H_{\ep}:\left\langle v,\frac{\partial U_{\ep,y_{j}}}{\partial x_{i}}\right\rangle_{\ep}=0,\ i=1,\cdots,N,\ j=1,\cdots,k\right\}.
\]
Let $\xi_{j}(j=1,\cdots,k)$ be the critical points of $V(x)$, we want to construct a solution $u_{\varepsilon}$  to Eq. \eqref{eq: Kirchhoff} of the form
\begin{equation*}
u_{\varepsilon}(x)=\sum_{j=1}^{k}U_{\ep,y_{\ep,j}}(x)+\var_{\ep}(x),\label{3.11}
\end{equation*}
where $y_{\ep,j}\in\R^{N},\ \var_{\ep}\in E_{\ep,y}$ satisfies
\begin{equation*}
|y_{\ep,j}-\xi_{j}|=o(1),\ \  \|\varphi_{\ep}\|_{\varepsilon}=o(\varepsilon^{\frac{N}{2}}),\ j=1,\cdots,k\label{3.1-2}.
\end{equation*}
Then $\varphi_{\ep}$ satisfies the following equation:
\begin{equation}\label{new form equation}
\begin{cases}
L_{\ep}\var_{\ep}=l_{\ep}+R_{\ep}(\var_{\ep}),~x\in\R^{N},\\
\var_{\ep}\in H^{1}(\R^{N}),
\end{cases}
\end{equation}
where
\begin{equation}\label{linear part}
  L_{\ep}\var=-\ep^{2}\Delta\var+V(x)\var-2\Big(\log\big(\sum_{j=1}^{k}U_{\ep,y_{\ep,j}}\big)+1\Big)\var,
\end{equation}
\begin{equation}\label{zero order}
\begin{aligned}
  l_{\ep}=&\sum_{j=1}^{k}\big(V(y_{\ep,j})-V(x)\big)U_{\ep,y_{\ep,j}} +2\sum_{j=1}^{k} U_{\ep,y_{\ep,j}}\Big(
    \log\big(\sum_{t=1}^{k}U_{\ep,y_{\ep,t}}\big)- \log U_{\ep,y_{\ep,j}})\Big),
    \end{aligned}
\end{equation}
and
\begin{equation}\label{nonlinear term}
\begin{aligned}
R_{\ep}(\var)=&2\bigg[\Big(\sum_{j=1}^{k}U_{\ep,y_{\ep,j}}+\var\Big)
    \log\Big(\sum_{t=1}^{k}U_{\ep,y_{\ep,t}}+\var\Big)\\
&-\Big(\sum_{j=1}^{k}U_{\ep,y_{\ep,j}}\Big)
    \log\Big(\sum_{t=1}^{k}U_{\ep,y_{\ep,t}}\Big)-
    \Big(\log\big(\sum_{t=1}^{k}U_{\ep,y_{\ep,t}}\big)+1\Big)\var\bigg].
 \end{aligned}
\end{equation}

\noindent The procedure to construct a $k$-peak solution for \eqref{eq: Kirchhoff} consists
of two steps:

\smallskip

\noindent\textbf{Step {(1)}.}\emph{ Finite dimensional reduction: We solve (\ref{new form equation}) up to an
approximate kernel $K_{\ep} $ of $L_{\ep}$.
That is, for any given $y_{j}\in \R^{N}(j=1,\cdots,k)$, we prove
the existence of $\var_{\ep}\in E_{\ep,y}$, such that
\begin{equation}\label{new equation}
  L_{\ep}\var_{\ep}=l_{\ep}+R_{\ep}(\var_{\ep})+\sum_{j=1}^{k}\sum_{i=1}^{N}a_{\ep,i,j}\frac{\partial U_{\ep,y_{j}}}{\partial x_{i}},~
\mbox{for some constants}~a_{\ep,i,j}.
\end{equation}}

\noindent\textbf{Step {(2)}.} \emph{ Solve the finite dimensional problem. We need to choose $y_{j}$ suitably, such that all
the constants $a_{\ep,i,j}$ in (\ref{new equation}) are zero.}

 In order to use the contraction mapping theorem to carry out the reduction for $(\ref{new form equation})$, we need the following invertible result and estimate $\|l_{\ep}\|_{\ep}$ and $\|R_{\ep}(\var_{\ep})\|_{\ep}$.

\begin{proposition}\label{inversibility}
There exist $\ep_{1},\de_{1},\rho>0$, independent of $y_{j},\ j=1,\cdots,k$,
such that for any $\ep\in(0,\ep_{1}]$, $\de\in(0,\de_{1})$ and
$y_{j}\in B_{\de}(\xi_{j})$, $P_{\ep}L_{\ep}$ is bijective in $E_{\ep,y}$. Moreover, it holds
\begin{eqnarray*}
\|P_{\ep}L_{\ep}\var\|_{\ep}\ge\rho\|\var\|_{\ep}, &  & \var\in E_{\ep,y},
\end{eqnarray*}
with the projection $P_{\ep}$
from $H^{1}(\R^{N})$ to $E_{\ep,y}$ as follows:
\begin{equation}\label{propection}
  P_{\ep}u=u-\sum_{j=1}^{k}\sum_{i=1}^{N}\big\langle u,\frac{\partial U_{\ep,y_{j}}}{\partial x_{i}}\big\rangle\frac{\partial U_{\ep,y_{j}}}{\partial x_{i}}.
\end{equation}
\end{proposition}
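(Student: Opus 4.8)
The plan is to argue by contradiction, following the standard Lyapunov--Schmidt scheme but carefully tracking the effect of the logarithmic term. Suppose the claimed coercivity estimate fails: there exist sequences $\ep_n\to 0$, points $y_{n,j}\in B_{\delta_n}(\xi_j)$ with $\delta_n\to 0$, and functions $\var_n\in E_{\ep_n,y_n}$ with $\|\var_n\|_{\ep_n}=1$, such that $\|P_{\ep_n}L_{\ep_n}\var_n\|_{\ep_n}\to 0$. Write $h_n:=P_{\ep_n}L_{\ep_n}\var_n$, so that $L_{\ep_n}\var_n = h_n + \sum_{j=1}^k\sum_{i=1}^N b_{n,i,j}\,\partial_{x_i}U_{\ep_n,y_{n,j}}$ for suitable constants $b_{n,i,j}$, and our goal is to derive a contradiction with $\|\var_n\|_{\ep_n}=1$. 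The first step is to rescale around each peak: fixing $j$, set $\widetilde\var_n^{(j)}(z):=\var_n(y_{n,j}+\ep_n z)$ and similarly rescale $L_{\ep_n}$. Because $U_{\ep_n,y_{n,j}}(y_{n,j}+\ep_n z)=e^{(V(y_{n,j})+N)/2}e^{-|z|^2/2}$ and the other bumps $U_{\ep_n,y_{n,t}}$ for $t\neq j$ are exponentially small on any fixed ball around $y_{n,j}$ (the peaks are at mutually bounded-away-from-zero distances), the rescaled operator converges to the limiting linearized operator $\mathcal L$ associated with the ground state $U$ at $\omega=V(\xi_j)$. Standard elliptic estimates give $\widetilde\var_n^{(j)}\rightharpoonup \var_\infty^{(j)}$ in $H^1_{\loc}(\R^N)$, and passing to the limit in the equation shows $\mathcal L\var_\infty^{(j)}=0$, hence $\var_\infty^{(j)}\in\Ker\mathcal L=\span\{\partial_{z_i}U\}$.

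The second step is to upgrade this to $\var_\infty^{(j)}\equiv 0$. The orthogonality conditions $\langle \var_n,\partial_{x_i}U_{\ep_n,y_{n,j}}\rangle_{\ep_n}=0$ survive the rescaling: after changing variables, $\int (\ep_n^2\nabla\var_n\cdot\nabla\partial_{x_i}U_{\ep_n,y_{n,j}}+(V+1)\var_n\,\partial_{x_i}U_{\ep_n,y_{n,j}})=0$ becomes, to leading order, an orthogonality relation between $\var_\infty^{(j)}$ and $\partial_{z_i}U$ in the relevant inner product; since $\var_\infty^{(j)}$ lies in the span of exactly those functions, it must vanish. Thus $\widetilde\var_n^{(j)}\to 0$ weakly and strongly in $H^1_{\loc}$ for every $j$. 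The third step is to recover control of the full norm. Testing the equation $L_{\ep_n}\var_n=h_n+\sum b_{n,i,j}\partial_{x_i}U_{\ep_n,y_{n,j}}$ against $\var_n$ itself gives
\[
\ep_n^2\|\nabla\var_n\|_2^2+\int (V(x)+1)\var_n^2 - \int \Big(2\log\big(\textstyle\sum_j U_{\ep_n,y_{n,j}}\big)+2+1\Big)\var_n^2 + \int 3\var_n^2 = \langle h_n,\var_n\rangle,
\]
where I have added and subtracted $3\int\var_n^2$; note $\langle h_n,\var_n\rangle\to 0$ and the $b_{n,i,j}$ terms drop by orthogonality. The point is that $\|\var_n\|_{\ep_n}^2=\ep_n^2\|\nabla\var_n\|_2^2+\int(V+1)\var_n^2$, so it suffices to show the remaining ``potential'' contribution $\int\big(2\log(\sum_j U_{\ep_n,y_{n,j}})+2\big)\var_n^2$ is $o(1)$. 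Split $\R^N$ into the union of balls $B_{d}(y_{n,j})$ (for a small fixed $d>0$) and the complement. On each ball, rescale: the contribution becomes $\int_{B_{d/\ep_n}}(2\log U + 2 + o(1))(\widetilde\var_n^{(j)})^2$ up to lower-order terms, and since $\widetilde\var_n^{(j)}\to 0$ in $H^1_{\loc}$ while $2\log U+2 = V(\xi_j)+N - |z|^2$ is locally bounded, the local part is $o(1)$ — one uses that the $H^1$-mass of $\widetilde\var_n^{(j)}$ escaping to the region $|z|\gtrsim \log(1/\ep_n)$ is controlled because there $2\log U + 2$ is large and negative, so that portion of the integral has a favorable sign and can be absorbed. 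On the complement of the balls, $\sum_j U_{\ep_n,y_{n,j}}$ is exponentially small, so $2\log(\sum_j U_{\ep_n,y_{n,j}})+2$ is large and negative there too, making that part of $-\int(\cdots)\var_n^2$ nonnegative; combined with $\int 3\var_n^2 \le 3\|\var_n\|_{\ep_n}^2$ being bounded, a careful bookkeeping yields $\|\var_n\|_{\ep_n}^2 = o(1) + \langle h_n,\var_n\rangle = o(1)$, contradicting $\|\var_n\|_{\ep_n}=1$.

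Finally, once the a priori estimate $\|P_\ep L_\ep\var\|_\ep\ge\rho\|\var\|_\ep$ is established uniformly, bijectivity of $P_\ep L_\ep:E_{\ep,y}\to E_{\ep,y}$ follows from Fredholm-type reasoning: $P_\ep L_\ep$ is a compact perturbation of (a positive multiple of) the identity on $E_{\ep,y}$ — indeed $L_\ep\var = \|\var\|_\ep^2$-type leading term minus a lower-order term involving multiplication by $2\log(\sum_j U_{\ep,y_j})+2$, which is a compact operator relative to $H_\ep$ on bounded regions and handled by the sign outside — so injectivity (a consequence of the estimate) implies surjectivity. The main obstacle is clearly the third step: the logarithmic weight $2\log(\sum_j U_{\ep,y_j})$ is unbounded below and not a standard lower-order perturbation, so one cannot simply quote the usual nondegeneracy machinery; the resolution is to exploit the \emph{sign} of this term in the far region (where it is hugely negative and thus helps the coercivity) and its local boundedness near each peak (where $H^1_{\loc}$ convergence to zero kills it), which is precisely the reason the authors introduce the weighted norm $\|\cdot\|_*$ later, though for the $H_\ep$-coercivity statement of this Proposition the sign argument suffices.
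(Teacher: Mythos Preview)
Your strategy is exactly the paper's: argue by contradiction, test the equation against $\var_n$ itself, exploit the sign of $2\log\big(\sum_j U_{\ep_n,y_{n,j}}\big)+3$ outside neighborhoods of the peaks, and near each peak rescale, pass to a weak limit solving the linearized limit equation, and kill the limit using nondegeneracy plus the orthogonality conditions. The paper organizes the steps in the opposite order (energy estimate first, reducing everything to showing $\int_{\cup_j B_{R\ep_n}(y_{n,j})}\var_n^2=o(\ep_n^N)$, then the blow-up), but the content is the same.

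There is one genuine slip you should fix: your normalization $\|\var_n\|_{\ep_n}=1$ is incompatible with the rescaling step. After setting $\widetilde\var_n^{(j)}(z)=\var_n(y_{n,j}+\ep_n z)$ one has $\int|\nabla\widetilde\var_n^{(j)}|^2+\widetilde\var_n^{(j)\,2}\sim \ep_n^{-N}\|\var_n\|_{\ep_n}^2$, which blows up, so you have no $H^1_{\loc}$ (or even $L^2_{\loc}$) bound and cannot extract a weak limit. The paper normalizes $\|\var_n\|_{\ep_n}=\ep_n^{N/2}$ for precisely this reason; with that change your argument goes through. A second, more cosmetic point: the paper splits at balls of radius $R\ep_n$ rather than fixed $d$, so that after rescaling the ``near'' region is the fixed ball $B_R(0)$ and strong $L^2_{\loc}$ convergence of $\widetilde\var_n^{(j)}\to 0$ immediately gives the needed smallness, avoiding the extra annulus argument you sketch. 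Your Fredholm remark for bijectivity goes beyond what the paper writes (it proves only the lower bound), but note that multiplication by $2\log\big(\sum_j U_{\ep,y_j}\big)+2$ is not compact since the coefficient is unbounded below; the cleanest route is to use the symmetry of the associated bilinear form together with the coercivity you have just established and invoke Lax--Milgram on $E_{\ep,y}$.
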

\begin{proof}
We use a contradiction argument. Assume, on the contrary, that there
exist $\ep_{n}\to0$, $\de_{n}\to0$, $y_{n,j}\in B_{\de_{n}}(\xi_{j})$
and $\var_{n}\in E_{n}\equiv E_{\ep_{n},y_{n,j}}$ such that
\begin{eqnarray}
\langle P_{\ep_{n}}L_{\ep_{n}}\var_{n},\psi_{n}\rangle=o_{n}(1)\|\var_{n}\|_{\ep_{n}}\|\psi_{n}\|_{\ep_{n}}, &  & \forall\:\psi_{n}\in E_{n}.\label{eq: B.1}
\end{eqnarray}
Since the equality is homogeneous, we may assume, with no loss of
generality, that $\|\var_{n}\|_{\ep_{n}}=\ep_{n}^{N/2}$.
Using  (\ref{eq: B.1}), we get
\begin{equation}\label{A.2}
\begin{aligned}
\int & (\ep_{n}^{2}|\nabla\var_{n}|^{2}+V(x)\var_{n}^{2})-2\int\Big(\log \big(\sum_{j=1}^{k}U_{\ep_{n},y_{n,j}}\big)+1\Big)\var_{n}^{2}\\
 =&\langle L_{\ep_{n}}\var_{n},\var_{n}\rangle_{\ep}=\langle P_{\ep_{n}}L_{\ep_{n}}\var_{n},\var_{n}\rangle_{\ep}=
 o(1)\|\var_{n}\|^{2}_{\ep_{n}}=o(\ep_{n}^{N}).
\end{aligned}
\end{equation}
On the other hand, for $R>0$ large enough, we have
\[
2\log \big(\sum_{j=1}^{k}U_{\ep_{n},y_{n,j}}\big)
+3\leq\frac{1}{2}V(x),\ \ \text{in}\ \R^{N}\setminus\bigcup_{j=1}^{k}B_{\ep_{n}R}(y_{n,j}).
\]
So,
\begin{equation*}
\begin{aligned}
 \int&\ep_{n}^{2}|\nabla\var_{n}|^{2}+V(x)\var_{n}^{2}-2\int\Big(\log \big(\sum_{j=1}^{k}U_{\ep_{n},y_{n,j}}\big)+1\Big)\var_{n}^{2}\\
\geq&\ep^{N}_{n}-\int_{\bigcup_{j=1}^{k}B_{\ep_{n}R}(y_{n,j})}\Big(2\log \big(\sum_{j=1}^{k}U_{\ep_{n},y_{n,j}}\big)+3\Big)\var_{n}^{2}
-\int_{\R^{N}\setminus\bigcup_{j=1}^{k}B_{\ep_{n}R}(y_{n,j})}\frac{V(x)}{2}\var_{n}^{2}\\
\geq&\frac{1}{2}\ep^{N}_{n}-\int_{\bigcup_{j=1}^{k}B_{\ep_{n}R}(y_{n,j})}\Big(2\log \big(\sum_{j=1}^{k}U_{\ep_{n},y_{n,j}}\big)+3\Big)\var_{n}^{2}
\geq\frac{1}{2}\ep^{N}_{n}-C\int_{\bigcup_{j=1}^{k}B_{\ep_{n}R}(y_{n,j})}\var_{n}^{2}.
\end{aligned}
\end{equation*}
Combining with (\ref{A.2}), we get
\begin{equation}\label{A.4}
 \ep^{N}_{n}\leq o(\ep^{N}_{n})+C\int_{\bigcup_{j=1}^{k}B_{\ep_{n}R}(y_{n,j})}\var_{n}^{2}.
\end{equation}
To deduce contradiction from (\ref{A.4}),  we only need to prove
\begin{equation}\label{A.5}
  \int_{\bigcup_{j=1}^{k}B_{\ep_{n}R}(y_{n,j})}\var_{n}^{2}= o(\ep^{N}_{n}).
\end{equation}
For this purpose, we will discuss the local behaviors of $\var_{n}$ near each $y_{n,j}(j=1,\cdots,k)$. So we introduce
\[
\widetilde{\var}_{n,j}(x)=\var_{n}(\ep_{n}x+y_{n,j}).
 \]
Then, since $V(x)$ is bounded and $\inf_{\R^{N}}V>0$, we have
\[
\int\left(|\nabla\widetilde{\var}_{n,j}|^{2}+|\widetilde{\var}_{n,j}|^{2}\right)\leq C.
\]
 Hence, up to a subsequence, we may assume that
 \begin{eqnarray*}
 \widetilde{\var}_{n,j}\wto\var_{j},\quad\text{weakly in }H^{1}(\R^{N}),~~~
 \widetilde{\var}_{n,j}\to\var_{j},\quad\text{in }L_{\loc}^{q}(\R^{N}),\quad(1\le q<\frac{2N}{N-2}).
\end{eqnarray*}
for some $\var_{j}\in H^{1}(\R^{N})$, we will prove $\var_{j}\equiv0$.
Define
\[
\widetilde{E}_{\ep_{n}}=\left\{w:w\in H^{1}(\R^{N}),\left\langle w\left(\frac{x-y_{n,j}}{\ep_{n}}\right),
       \frac{\partial U_{\ep_{n},y_{n,j}}}{\partial x_{i}}\right\rangle_{\ep_{n}}=0,\ i=1,\cdots,N,j=1,\cdots,k\right\}.
\]
Now, for any $\widetilde{\phi}_{n,j}\in\widetilde{E}_{\ep_{n}}$, by (\ref{eq: B.1}), it holds
\begin{equation}
\begin{aligned}
\int& \nabla\widetilde{\var}_{n,j}\nabla\widetilde{\phi}_{n,j}+
\Big(V(\ep_{n}y+y_{n,j})-2 \log \big(\sum_{t=1}^{k}U_{\ep_{n},y_{n,t}}(\ep_{n}y+y_{n,j})\big)+1\Big)\widetilde{\var}_{n,j}\widetilde{\phi}_{n,j}\\
 =&\ep^{-N}_{n}\int\ep^{2}_{n}\nabla\var_{n,j}\nabla\phi_{n,j}+ \Big(V(x)  -2\log\big(\sum_{t=1}^{k}U_{\ep_{n},y_{n,t}}(\ep_{n}y+y_{n,j})\big)+1\Big)\var_{n,j}\phi_{n,j}\\
=&\ep^{-N}_{n}\langle\L_{\ep_{n}}\var_{n,j},\phi_{n,j}\rangle
=o(\ep^{-N}_{n})\|\var_{n,j}\|_{\ep_{n}}\|\phi_{n,j}\|_{\ep_{n}},
\end{aligned}
\label{A.6}
\end{equation}
 where $\phi_{n,j}(x)=\widetilde{\phi}_{n,j}(\frac{x-y_{n,j}}{\ep_{n}})\in E_{\ep_{n}}$.
 For any $\phi\in H^{1}(\R^{N})$, there exists $c_{\ep_{n},i,j}\in \R$ satisfying
 \[
\widetilde{\phi}_{n,j}=\phi-\sum_{j=1}^{k}\sum_{i=1}^{N}c_{\ep_{n},i,j}\frac{\partial U_{\ep_{n},y_{n,j}}(\ep_{n}x+y_{n,j})}{\partial x_{i}}\in\widetilde{E}_{\ep_{n}}.
 \]
If $\phi$ satisfies
\[
\int\nabla\phi\nabla\frac{\partial U_{\ep_{n},y_{n,j}}(\ep_{n}x+y_{n,j})}{\partial x_{i}}+V(y)\phi\frac{\partial U_{\ep_{n},y_{n,j}}(\ep_{n}x+y_{n,j})}{\partial x_{i}}=0,
\]
 for $i=1,\cdots,N,\ j=1,\cdots,k$, then $c_{\ep_{n},i,j}=0$.
Inserting $\widetilde{\phi}_{n,j}$ into (\ref{A.6})  and letting  $n\rightarrow\infty$, we find
 \begin{equation}\label{A.7}
 \int\nabla\var_{j}\nabla\phi+V(\xi_{j})\var_{j}\phi-2\int(\log U^{j}+1)\varphi_{j}\phi=0,
  \end{equation}
where $U^{j}=U_{\ep_{n},y_{n,j}}(\ep_{n}y+y_{n,j})=e^{\frac{V(\xi_{j})+N-|x|^{2}}{2}}$  satisfies
$
-\triangle U^{j}+V(\xi_{j})U^{j}=U^{j}\log (U^{j})^{2}.$

Furthermore, we know
\[
-\triangle \frac{\partial U^{j}}{\partial x_{i}}+V(\xi_{j})\frac{\partial U^{j}}{\partial x_{i}}
-2(\log U^{j}+1)\frac{\partial U^{j}}{\partial x_{i}}=0.
\]
And then \eqref{A.7} also holds for $\phi=\displaystyle\sum_{i=1}^{N}\frac{\partial U^{j}}{\partial x_{i}}$.
Thus, \eqref{A.7} holds for any $\phi\in H^{1}(\R^{N})$. So we have
\[
-\triangle\varphi_{j}+V(\xi_{j})\varphi_{j}-2(\log U^{j}+1)\varphi_{j}=0.
\]
Thus, the non-degeneracy  of $U^{j}$ gives $
\varphi_{j}=\displaystyle\sum_{i=1}^{N}c_{i}\frac{\partial U^{j}}{\partial x_{i}}$.

On the other hand, $\widetilde{\varphi}_{n,j}\in \widetilde{E}_{\ep_{n}}$ implies $\langle\varphi_{j}, \frac{\partial U^{j}}{\partial x_{i}}\rangle_{\ep}=0$ for any $i=1,\cdots,N$. As a result, $\varphi_{j}=0$  and thus (\ref{A.5})
follows. We complete the proof.
\end{proof}
\begin{lemma}\label{lem: estimate for the first order} Assume that
$\ensuremath{V}$ satisfies (V1) and (V2). Then, there exists a constant
$\ensuremath{C>0}$, independent of $\ensuremath{\ep,\de}$, such
that for any $\ensuremath{y_{j}\in B_{\de}(\xi_{j})}$ there holds
\begin{equation}\label{estimates of linear part}
 \|l_{\ep}\|_{\ep}=O\Big (\sum_{j=1}^{k}\big|\nabla V(y_{j})\big|\ep^{\frac{N}{2}+1}+\ep^{\frac{N}{2}+2}\Big).
\end{equation}
 \end{lemma}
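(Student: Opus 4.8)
The plan is to split $l_\ep$ into the two structurally different error terms appearing in \eqref{zero order} and estimate each separately. Write $l_\ep = l_\ep^{(1)} + l_\ep^{(2)}$ with
\[
l_\ep^{(1)} = \sum_{j=1}^{k}\big(V(y_j)-V(x)\big)U_{\ep,y_j},\qquad
l_\ep^{(2)} = 2\sum_{j=1}^{k}U_{\ep,y_j}\,\log\frac{\sum_{t=1}^{k}U_{\ep,y_t}}{U_{\ep,y_j}}.
\]
Here $l_\ep^{(1)}$ is the ``potential error'' produced by replacing $V(x)$ with $V(y_j)$ in \eqref{eq: our main part}, and $l_\ep^{(2)}$ is the ``interaction error'' coming from the superposition of the $k$ bumps. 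Since $\|u\|_\ep \le \ep\|\nabla u\|_2 + \sqrt{1+\|V\|_\infty}\,\|u\|_2$ and the functions $l_\ep^{(i)}$ are smooth and Gaussian-decaying, it suffices to bound $\big\|l_\ep^{(i)}\big\|_{2}$ and $\ep\big\|\nabla l_\ep^{(i)}\big\|_{2}$ for $i=1,2$.

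For $l_\ep^{(1)}$ I would work on a fixed ball $B_{\rho}(\xi_j)$ on which $V$ is twice differentiable with bounded derivatives. For $y_j\in B_\delta(\xi_j)$ with $\delta$ small and $|x-y_j|\le\rho$, a Taylor expansion gives $|V(x)-V(y_j)|\le C\big(|\nabla V(y_j)|\,|x-y_j|+|x-y_j|^2\big)$, while for $|x-y_j|>\rho$ we use $|V(x)-V(y_j)|\le 2\|V\|_\infty$ together with $U_{\ep,y_j}(x)\le Ce^{-\rho^2/(2\ep^2)}$, so the outer region only contributes $O(e^{-c/\ep^2})$. After the change of variables $x=\ep z+y_j$, using $U_{\ep,y_j}(x)^2=e^{V(y_j)+N}e^{-|z|^2}$ and the finiteness of $\int_{\R^N}|z|^m e^{-|z|^2}\,dz$, one gets $\int|x-y_j|^{2}U_{\ep,y_j}^2=O(\ep^{N+2})$ and $\int|x-y_j|^{4}U_{\ep,y_j}^2=O(\ep^{N+4})$, hence $\|l_\ep^{(1)}\|_2=O\big(\sum_j|\nabla V(y_j)|\,\ep^{\frac N2+1}+\ep^{\frac N2+2}\big)$. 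For the gradient, $\nabla\big[(V(y_j)-V(x))U_{\ep,y_j}\big]=-\nabla V(x)U_{\ep,y_j}-\big(V(x)-V(y_j)\big)\frac{x-y_j}{\ep^2}U_{\ep,y_j}$; expanding $\nabla V(x)=\nabla V(y_j)+O(|x-y_j|)$ near $\xi_j$ and inserting the above bound for $V(x)-V(y_j)$, the same scaling yields $\ep^2\|\nabla l_\ep^{(1)}\|_2^2=O\big(\sum_j|\nabla V(y_j)|^2\ep^{N+2}+\ep^{N+4}\big)$, which matches.

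For $l_\ep^{(2)}$ the point is that the peaks are well separated: with $d_0:=\min_{j\ne t}|\xi_j-\xi_t|>0$ and $\delta<d_0/4$ we have $|y_j-y_t|\ge d_0/2$ for $j\ne t$. Writing $U_{\ep,y_t}/U_{\ep,y_j}=e^{(V(y_t)-V(y_j))/2}\exp\!\big(\frac{|x-y_j|^2-|x-y_t|^2}{2\ep^2}\big)$, one distinguishes the region where $\sum_{t\ne j}U_{\ep,y_t}/U_{\ep,y_j}\le\frac12$ — there $\log(1+s)\le s$ and each $U_{\ep,y_t}(x)$ is pointwise exponentially small since $|x-y_t|\ge d_0/4$ — from the region where that sum is $\ge\frac12$, where instead $|x-y_j|\ge d_0/8$ and $\log\big(1+\sum_{t\ne j}U_{\ep,y_t}/U_{\ep,y_j}\big)\le C(1+|x-y_j|^2/\ep^2)$, so $U_{\ep,y_j}$ times this is controlled by a Gaussian tail starting at distance $d_0/8$. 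Either way $\|l_\ep^{(2)}\|_2$ and $\ep\|\nabla l_\ep^{(2)}\|_2$ are $O(e^{-c/\ep^2})$ for some $c>0$, which is absorbed into the $\ep^{\frac N2+2}$ term. Combining with the estimate for $l_\ep^{(1)}$, and noting all constants depend only on $N$, $k$, $d_0$, $\|V\|_\infty$ and the $C^2$-norm of $V$ near the $\xi_j$ — not on $\ep$ or $\delta$ — gives \eqref{estimates of linear part}.

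The Gaussian integrals are the bulk of the computation but present no real difficulty; the only genuinely delicate point is the treatment of $l_\ep^{(2)}$, where the naive bound $\log(1+s)\le s$ is too lossy near a peak and one must split into the two regimes above. One should also note that, although $(V_1)$ only states $V\in C^1$, the second-order Taylor expansion of $V$ (and of $\nabla V$) used for $l_\ep^{(1)}$, as well as condition $(V_2)$ itself, requires $V$ to be $C^2$ in a neighbourhood of each $\xi_j$, which is used here.
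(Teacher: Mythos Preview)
Your proof is correct and reaches the same conclusion as the paper, but the mechanics differ. The paper estimates $\|l_\ep\|_\ep$ by duality: it bounds $\big|\int l_\ep\,\eta\big|$ for arbitrary $\eta\in H_\ep$ via Cauchy--Schwarz, which yields the bound without ever differentiating $l_\ep$. You instead compute the $H_\ep$-norm of $l_\ep$ directly as a function, which forces you to also control $\ep\|\nabla l_\ep^{(i)}\|_2$; this is extra work (the gradient computation for $l_\ep^{(1)}$, and the implicit one for $l_\ep^{(2)}$) but is routine and gives a marginally stronger statement. For the interaction term $l_\ep^{(2)}$, the paper splits into $B_\delta(y_j)$ and its complement, using $\log(1+s)\le s$ on the ball and a crude bound $U_{\ep,y_j}\log\big(\sum_t U_{\ep,y_t}/U_{\ep,y_j}\big)\le C\,U_{\ep,y_j}^{1/2}\big(\sum_t U_{\ep,y_t}\big)^{1/2}$ outside; your splitting according to whether $\sum_{t\ne j}U_{\ep,y_t}/U_{\ep,y_j}\lessgtr\tfrac12$ is more geometric but lands on the same $O(e^{-c/\ep^2})$. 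The paper's route is shorter; yours is more self-contained. Your closing remark that the second-order Taylor expansion of $V$ (and condition $(V_2)$ itself) tacitly requires $V\in C^2$ near each $\xi_j$ is a valid observation that the paper leaves implicit.
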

\begin{proof}
From $(\ref{zero order})$, for any $\eta\in H_{\varepsilon}$, we have
\[
\begin{aligned}\langle l_{\ep},\eta\rangle_{\ep}
=&\int\sum_{j=1}^{k}\Big(V(y_{j})-V(x)\Big)U_{\ep,y_{j}}\eta+2\int\bigg(\sum_{j=1}^{k}U_{\ep,y_{j}}
   \Big(\log\big(\sum_{t=1}^{k}U_{\ep,y_{t}}\big)-\log U_{\ep,y_{j}}\Big)\bigg)\eta.
\end{aligned}
\]
As
\begin{equation}\label{example1}
\begin{aligned}
\int&\left(V(y_{j})-V(x)\right)U_{\ep,y_{j}}\eta\\=&O\Big(\int\Big(\nabla V(y_{j})(x-y_{j})+O(|x-y_{j}|^{2})\Big)U_{\ep,y_{j}}\eta\Big)\\
=&O\Big(\int\Big|\nabla V(y_{j})(x-y_{j})+O(|x-y_{j}|^{2})\Big|^{2}U^{2}_{\ep,y_{j}}\Big)^{\frac{1}{2}}\|\eta\|_{\ep}\Big)\\
=&O\left(\big|\nabla V(y_{j})\big|\ep^{\frac{N}{2}+1}+\ep^{\frac{N}{2}+2}\right)\|\eta\|_{\ep},
\end{aligned}
\end{equation}
and
\begin{equation}\label{example}
\begin{aligned}
 \int &U_{\ep,y_{j}}
   \Big(\log\big(\sum_{t=1}^{k}U_{\ep,y_{t}}\big)-\log U_{\ep,y_{j}}\Big) \eta\\
= &\int_{B_{\delta}(y_{j})}
 U_{\ep,y_{j}}\Big(\log\big(1+\frac{\sum_{t\neq j}U_{\ep,y_{t}}}{U_{\ep,y_{j}}}\big)\Big)\eta+\int_{\R^{N}\backslash B_{\delta}(y_{j} )}U_{\ep,y_{j}}\left(\log\frac{\sum_{t=1}^{k}U_{\ep,y_{t}}}{U_{\ep,y_{j}}}\right)\eta\\
=& O\Big(\int_{B_{\delta}(y_{j})}
         \Big(\sum_{t\neq j}U_{\ep,y_{t}}\Big)|\eta|
        + \int_{\R^{N}\backslash B_{\delta}(y_{j} )} U^{\frac{1}{2}}_{\ep,y_{j}}
         \Big(\sum_{t=1}^{k}U_{\ep,y_{t}}\Big)^{\frac{1}{2}}|\eta|\Big)=O\Big(e^{-\frac{c}{\ep^{2}}}\|\eta\|_{\ep}\Big),
\end{aligned}
\end{equation}
then we get (\ref{estimates of linear part}) from \eqref{example1} and \eqref{example}.
\end{proof}
\begin{lemma} \label{lem: error estimates}
It holds
\[
\|R_{\ep}(\var)\|_{\ep}=O\Big(\frac{1}{|\ln\ep|^{1-\theta}}\|\var\|_{\ep}\Big),~\mbox{for all}~
 \var\in \wp_{\ep},
\]
where $\wp_{\ep}$ was defined in \eqref{new space}.
 \end{lemma}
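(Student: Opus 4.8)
The plan is to promote the heuristic pointwise bound \eqref{R1} to an exact remainder and then pass to the norm by testing against an arbitrary $\eta\in H_\ep$. Write $\Sigma:=\sum_{j=1}^{k}U_{\ep,y_{\ep,j}}$ and $f(t)=t\log t$, so that $f'(t)=\log t+1$, $f''(t)=1/t$, and definition \eqref{nonlinear term} reads $R_\ep(\var)=2\bigl(f(\Sigma+\var)-f(\Sigma)-f'(\Sigma)\var\bigr)$. Taylor's formula with integral remainder gives, at every $x$ where $\Sigma+s\var>0$ for all $s\in[0,1]$,
\[
R_\ep(\var)=2\var^{2}\int_{0}^{1}\frac{1-s}{\Sigma+s\var}\,ds .
\]
Since $|R_\ep(\var)|\le C|\var|$ will follow below, $R_\ep(\var)\in L^{2}$, and, understanding $\langle R_\ep(\var),\eta\rangle_\ep=\int R_\ep(\var)\,\eta$ exactly as $\langle l_\ep,\eta\rangle_\ep$ was understood in Lemma~\ref{lem: estimate for the first order}, it suffices to prove
\[
\Bigl|\int R_\ep(\var)\,\eta\Bigr|\le\frac{C}{|\ln\ep|^{1-\theta}}\,\|\var\|_\ep\,\|\eta\|_\ep\qquad\text{for all }\eta\in H_\ep .
\]

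The first step is to control $\var$ by $\Sigma$ pointwise. By $(V_1)$ one has $e^{(V(y_j)+N)/2}\ge c_0:=e^{(\inf_{\R^N}V+N)/2}>0$, a constant independent of $\ep$ and of $y_j\in B_\delta(\xi_j)$, hence
\[
\Sigma(x)\ge c_0\sum_{j=1}^{k}e^{-|x-y_j|^{2}/(2\ep^{2})} .
\]
On the other hand, $\var\in\wp_\ep$ means $\|\var\|_*\le|\ln\ep|^{-(1-\theta)}$ by \eqref{new space}, that is, $|\var(x)|\le|\ln\ep|^{-(1-\theta)}\sum_{j=1}^{k}e^{-|x-y_j|^{2}/(2\ep^{2})}$ by \eqref{new norm}. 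Dividing the two inequalities,
\[
\frac{|\var(x)|}{\Sigma(x)}\le\frac{1}{c_0\,|\ln\ep|^{1-\theta}}\qquad\text{for every }x\in\R^{N},
\]
so for $\ep$ small we have $|\var|\le\tfrac12\Sigma$, whence $\Sigma+s\var\ge\tfrac12\Sigma>0$ on $[0,1]$ and the integral representation above is legitimate. Inserting this lower bound and using $\int_0^1 2(1-s)\,ds=1$ yields the pointwise bound $|R_\ep(\var)(x)|\le 2\var^{2}(x)/\Sigma(x)$ together with $\bigl\|\var/\Sigma\bigr\|_{L^\infty}\le C\,|\ln\ep|^{-(1-\theta)}$.

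For the second step, take any $\eta\in H_\ep$. Then, by Cauchy--Schwarz,
\[
\Bigl|\int R_\ep(\var)\,\eta\Bigr|\le 2\int\frac{|\var|}{\Sigma}\,|\var|\,|\eta|\le 2\Bigl\|\frac{\var}{\Sigma}\Bigr\|_{L^\infty}\int|\var|\,|\eta|\le\frac{C}{|\ln\ep|^{1-\theta}}\,\|\var\|_{2}\,\|\eta\|_{2}.
\]
Since $(V_1)$ gives $V+1\ge 1$, hence $\|u\|_2\le\|u\|_\ep$ for every $u\in H_\ep$, we may replace $\|\var\|_2\|\eta\|_2$ by $\|\var\|_\ep\|\eta\|_\ep$, which is precisely the displayed inequality; taking the supremum over $\|\eta\|_\ep\le1$ gives $\|R_\ep(\var)\|_\ep=O\bigl(|\ln\ep|^{-(1-\theta)}\|\var\|_\ep\bigr)$.

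The delicate point, and the heart of the argument, is the first step: the logarithm inside $R_\ep(\var)$ must never be evaluated at a nonpositive argument, and, beyond that, $\var/\Sigma$ must be small \emph{uniformly in $x$} to produce the gain $|\ln\ep|^{-(1-\theta)}$. This is exactly why the weight $\bigl(\sum_j e^{-|x-y_j|^{2}/(2\ep^{2})}\bigr)^{-1}$ was built into $\|\cdot\|_*$ in \eqref{new norm}: it matches the Gaussian decay of each $U_{\ep,y_j}$, so that $\var/\Sigma$ is controlled everywhere, not merely on the region where $\Sigma$ is of order one. With the ordinary $H^{1}$ (or $L^\infty$) norm this uniform control fails, which is precisely the obstruction \eqref{R1} described in the introduction; everything else is routine.
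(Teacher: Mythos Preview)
Your proof is correct and follows essentially the same route as the paper: Taylor expand $t\mapsto t\log t$ to obtain the pointwise bound $|R_\ep(\var)|\le C\var^2/\Sigma$, then use the membership $\var\in\wp_\ep$ to control $|\var|/\Sigma$ uniformly by $C|\ln\ep|^{-(1-\theta)}$, and finish by Cauchy--Schwarz against a test function $\eta\in H_\ep$. Your version is in fact more careful than the paper's, since you explicitly verify that $\Sigma+s\var>0$ for all $s\in[0,1]$ (so that the Taylor remainder is legitimate), a point the paper leaves implicit.
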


\begin{proof}
First, by \eqref{nonlinear term} and Taylor's expansion, we find  \eqref{R1}.
Then we can obtain
\[
\begin{aligned}
\langle R_{\ep}(\var),\eta\rangle_{\ep}=&O \Big(\int \Big(\sum_{j=1}^{k}e^{-\frac{|x-y_{j}|^{2}}{2\ep^{2}}}\Big)^{-1}|\var|\cdot|\var\eta|\Big)\\
=&O\Big(\|\varphi\|_{*}\int|\varphi \eta|\Big)
=O\Big(\frac{1}{|\ln\ep|^{1-\theta}}\|\var\|_{\ep}\|\eta\|_{\ep}\Big).
\end{aligned}
\] Thus we complete the proof.
\end{proof}

\section{Finite dimensional reduction}\label{Finite dimensional reduction}
In this section, we carry out the reduction argument.
For any fixed $y_{j}\in B_{\delta}(\xi_{j}),\ j=1,\cdots,k$, we consider the following problem:
\begin{equation}\label{project equation}
 P_{\ep}L_{\ep}\var=l_{\ep}+R_{\ep}(\var) ,\ \ \var\in E_{\ep,y}.
\end{equation}
\begin{lemma}
It holds
  \begin{equation}\label{l**}
   \|l_{\ep}\|_{*}=O\big(1\big),\ \ \ \ \|R_{\ep}(\var)\|_{*}=O\big(\|\var\|_{*}^{2}\big).
  \end{equation}
\end{lemma}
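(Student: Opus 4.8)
The plan is to trade everything for two elementary facts. Set $d_j:=|x-y_j|$ (with $y_j\in B_\delta(\xi_j)$), $C_j:=e^{(V(y_j)+N)/2}$, so that $U_{\ep,y_j}(x)=C_j e^{-d_j^2/(2\ep^2)}$, and $W(x):=\sum_{j=1}^{k}e^{-d_j^2/(2\ep^2)}$, so that $\|\var\|_*=\sup_{x\in\R^N}W(x)^{-1}|\var(x)|$. By $(V_1)$ there are constants $C_0\ge c_0>0$ with $c_0\le C_j\le C_0$ uniformly in $j$ and in $y_j$; hence
\[
c_0\,W(x)\ \le\ \sum_{j=1}^{k}U_{\ep,y_j}(x)\ \le\ C_0\,W(x),\qquad W(x)\ \ge\ e^{-\frac{1}{2\ep^2}\min_{t}d_t^2}\quad(x\in\R^N).
\]
The second fact is simply $\sup_{s\ge0}(1+s)e^{-s}<\infty$; this is the device that lets a logarithmic factor be absorbed into a Gaussian, and it is the whole reason the weight in $\|\cdot\|_*$ is the correct one.

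For $\|l_\ep\|_*$ I would estimate the two sums in \eqref{zero order} separately. Since $V$ is bounded, $|V(y_j)-V(x)|\le\sup_{\R^N}V$, and therefore
\[
W(x)^{-1}\Bigl|\sum_{j=1}^{k}\bigl(V(y_j)-V(x)\bigr)U_{\ep,y_j}(x)\Bigr|\ \le\ C_0\,\bigl(\sup_{\R^N}V\bigr)\,W(x)^{-1}\sum_{j=1}^{k}e^{-\frac{d_j^2}{2\ep^2}}\ =\ O(1).
\]
For the cross terms, put $s_j:=\tfrac{1}{2\ep^2}\bigl(d_j^2-\min_t d_t^2\bigr)\ge0$; then $\sum_{t}U_{\ep,y_t}/U_{\ep,y_j}\le (kC_0/c_0)\,e^{s_j}$, hence $\log\bigl(\sum_{t}U_{\ep,y_t}/U_{\ep,y_j}\bigr)\le C+s_j$, and using the lower bound $W(x)\ge e^{-\min_t d_t^2/(2\ep^2)}$,
\[
W(x)^{-1}\,U_{\ep,y_j}(x)\,\log\frac{\sum_{t}U_{\ep,y_t}(x)}{U_{\ep,y_j}(x)}\ \le\ C_0\,\frac{e^{-d_j^2/(2\ep^2)}}{W(x)}\,(C+s_j)\ \le\ C_0\,e^{-s_j}(C+s_j)\ =\ O(1).
\]
Summing over $j=1,\dots,k$ gives $\|l_\ep\|_*=O(1)$.

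For $\|R_\ep(\var)\|_*$ I would first record that, for $\var\in\wp_\ep$, the inequalities $\|\var\|_*\le|\ln\ep|^{-(1-\theta)}$ and $\sum_{t}U_{\ep,y_t}\ge c_0W$ give
\[
|\var(x)|\ \le\ \|\var\|_*\,W(x)\ \le\ \frac{W(x)}{|\ln\ep|^{1-\theta}}\ \le\ \frac{1}{c_0\,|\ln\ep|^{1-\theta}}\sum_{t=1}^{k}U_{\ep,y_t}(x)\ \le\ \tfrac12\sum_{t=1}^{k}U_{\ep,y_t}(x)
\]
once $\ep$ is small. Hence the intermediate point produced by the second-order Taylor expansion of $u\mapsto u\log u$ behind \eqref{nonlinear term} stays comparable to $\sum_{t}U_{\ep,y_t}$, which is exactly the pointwise bound \eqref{R1}: $|R_\ep(\var)|=O\bigl(\var^2(\sum_{t}U_{\ep,y_t})^{-1}\bigr)=O(\var^2 W^{-1})$. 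Consequently
\[
W(x)^{-1}\bigl|R_\ep(\var)(x)\bigr|\ \le\ C\,W(x)^{-2}\var(x)^2\ =\ C\bigl(W(x)^{-1}|\var(x)|\bigr)^2\ \le\ C\,\|\var\|_*^2,
\]
and taking the supremum over $x\in\R^N$ yields $\|R_\ep(\var)\|_*=O(\|\var\|_*^2)$, which completes \eqref{l**}.

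The step I expect to be the genuine obstacle is the estimate of the cross terms of $l_\ep$: although $\log\bigl(\sum_{t}U_{\ep,y_t}/U_{\ep,y_j}\bigr)$ grows like $d_j^2/\ep^2$ when $x$ lies near a peak $y_t$ with $t\ne j$, this growth is compensated exactly by the Gaussian decay of $U_{\ep,y_j}$ together with the weight defining $\|\cdot\|_*$ — quantitatively, by $\sup_{s\ge0}(1+s)e^{-s}<\infty$. Once the comparability $\sum_{j}U_{\ep,y_j}\asymp W$ and the lower bound $W\ge e^{-\min_t d_t^2/(2\ep^2)}$ are in hand, all the remaining estimates are routine.
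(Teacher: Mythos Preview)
Your proof is correct and follows essentially the same approach as the paper: bound the potential part of $l_\ep$ using that $V$ is bounded, bound the cross terms by comparing each $U_{\ep,y_j}$ to the weight $W$, and handle $R_\ep$ via the second-order remainder bound \eqref{R1}. Your treatment of the cross terms via $s_j=\tfrac{1}{2\ep^2}(d_j^2-\min_t d_t^2)$ and $\sup_{s\ge0}(1+s)e^{-s}<\infty$ is actually more explicit than the paper's, which merely points to the integral estimate \eqref{example} as being ``similar''; and your observation that one needs $|\var|\le\tfrac12\sum_t U_{\ep,y_t}$ (guaranteed by $\var\in\wp_\ep$) to justify \eqref{R1} is a hypothesis the paper uses tacitly.
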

\begin{proof}
 Recall \eqref{zero order},
since $V(x)\in C^{1}$ satisfies $(V_{1})$, we have
 \[\begin{aligned}
&\sup_{x\in\R^{N}}   \Big|\sum_{j=1}^{k}\Big(V(y_{j})-V(x)\Big)U_{\ep,y_{\ep,j}}\Big|
         \Big(\sum_{j=1}^{k}e^{-\frac{|x-y_{j}|^{2}}{2\ep^{2}}}\Big)^{-1}\\
= &O\Big( \sup_{x\in\R^{N}}\Big|\sum_{j=1}^{k}\big(V(y_{j})-V(x)\big)U_{\ep,y_{\ep,j}}\Big|
         \Big(\sum_{j=1}^{k}\big|V(y_{j})-V(x)\big|e^{-\frac{|x-y_{j}|^{2}}{2\ep^{2}}}\Big)^{-1}\Big)
         =O\big(1\big).
   \end{aligned}\]
  Similar to \eqref{example},
\[
\Big|\Big(\sum_{j=1}^{k}U_{\ep,y_{j}}\Big)
          \log \Big(\sum_{t=1}^{k}U_{\ep,y_{t}}\Big)-\sum_{j=1}^{k}\Big(U_{\ep,y_{j}}\log U_{\ep,y_{j}}\Big)\Big|
          \Big(\sum_{j=1}^{k}e^{-\frac{|x-y_{j}|^{2}}{2\ep^{2}}}\Big)^{-1}=O\big(1\big).\]
Thus we get
$\|l_{\ep}\|_{*}=O\big(1\big)$. Also by \eqref{nonlinear term},
\[
|R_{\ep}(\var)|\Big(\sum_{j=1}^{k}e^{-\frac{|x-y_{j}|^{2}}{2\ep^{2}}}\Big)^{-1}
=O\Big( \var^{2}\Big(\sum_{j=1}^{k}U_{\ep,y_{j}}\Big)^{-1}
        \Big(\sum_{j=1}^{k}e^{-\frac{|x-y_{j}|^{2}}{2\ep^{2}}}\Big)^{-1}\Big)
=O\Big(\|\var\|_{*}^{2}\Big).\]
Then  we obtain $\|R_{\ep}(\var)\|_{*}=O\big(\|\var\|_{*}^{2}\big)$.
\end{proof}

\begin{proposition}\label{contract lemma}
  Assume $N\geq3$, $u$ solves
 \begin{equation}\label{project lemma}
  P_{\ep}L_{\ep}u= l_{\ep}+ R_{\ep}(\var) ,~u\in H^{1}(\R^{N}),
  \end{equation}
  with $\var\in E_{\ep,y}$ satisfying
$$ \|u\|_{\ep}=O(\ep^{\frac{N}{2}+1}),\   \|\var\|_{\ep}=O(\ep^{\frac{N}{2}+1})~\mbox{and}~\|\var\|_{*}\leq \frac{1}{|\ln\ep|^{1-\theta}},$$
 where $\theta>0$ is a small positive constant. Then it holds
  $$
  \|u\|_{*}\leq \frac{1}{|\ln\ep|^{1-\theta}}.
  $$
\end{proposition}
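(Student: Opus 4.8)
\emph{Proof strategy.} The plan is to turn the projected equation into a genuine pointwise PDE, and then to control $u$ in the $\|\cdot\|_{*}$ norm separately on a slowly shrinking neighbourhood of the peaks and on its complement, the decisive feature being that the hypothesis $\|u\|_{\ep}=O(\ep^{N/2+1})$ carries one extra power of $\ep$ compared with the ``natural'' peak size $\ep^{N/2}$. Throughout, fix a small $\sigma>0$ and the slowly divergent radius $L_{\ep}:=|\ln\ep|^{\frac12-\frac{\theta}{4}}$, so that $L_{\ep}^{2}=|\ln\ep|^{1-\theta/2}$ satisfies $L_{\ep}^{2}=o(|\ln\ep|)$ and $L_{\ep}^{2}\ge|\ln\ep|^{1-\theta}$.

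First I would reduce to a pointwise identity. Since $P_{\ep}L_{\ep}u-L_{\ep}u\in K_{\ep}$, the assumption is equivalent to
\[
-\ep^{2}\De u+V(x)u-2\Big(\log\big(\textstyle\sum_{j}U_{\ep,y_{j}}\big)+1\Big)u=l_{\ep}+R_{\ep}(\var)+\sum_{i,j}c_{\ep,i,j}\frac{\pa U_{\ep,y_{j}}}{\pa x_{i}}=:F
\]
for suitable constants $c_{\ep,i,j}$. Testing against $\pa U_{\ep,y_{j}}/\pa x_{i}$, using the self-adjointness of $L_{\ep}$, the identity $\L_{\ep}\,\pa U_{\ep,y_{j}}/\pa x_{i}=0$ (hence $L_{\ep}\,\pa U_{\ep,y_{j}}/\pa x_{i}=O(|x-y_{j}|+e^{-c/\ep^{2}})\,\pa U_{\ep,y_{j}}/\pa x_{i}$), the near-orthogonality of the $\pa U_{\ep,y_{j}}/\pa x_{i}$, and the bounds of Lemmas \ref{lem: estimate for the first order} and \ref{lem: error estimates} together with $\|u\|_{\ep}=O(\ep^{N/2+1})$, one checks that each $|c_{\ep,i,j}|$ is a positive power of $\ep$, so that this term in $F$ is negligible; combined with \eqref{l**} this gives the pointwise bound $|F(x)|\le C\sum_{j}e^{-|x-y_{j}|^{2}/(2\ep^{2})}$ on $\R^{N}$.

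Next, the \emph{interior estimate}. For each $j$ put $\widetilde u(z):=u(\ep z+y_{j})$ on $B_{2L_{\ep}}$; it solves $-\De\widetilde u+\widetilde c_{\ep}\widetilde u=\ep^{2}F(\ep\,\cdot+y_{j})$ with $\widetilde c_{\ep}(z)=|z|^{2}+O(1)$ uniformly on $B_{2L_{\ep}}$ (since there $\sum_{t}U_{\ep,y_{t}}=U_{\ep,y_{j}}(1+O(e^{-c/\ep^{2}}))$ and $2\log U_{\ep,y_{j}}(\ep z+y_{j})=V(y_{j})+N-|z|^{2}$). Because $\|u\|_{L^{2}(\R^{N})}\le C\|u\|_{\ep}$ we get $\|\widetilde u\|_{L^{2}(B_{2L_{\ep}})}\le C\ep^{-N/2}\|u\|_{L^{2}}\le C\ep$ and $\|\ep^{2}F(\ep\,\cdot+y_{j})\|_{L^{\infty}(B_{2L_{\ep}})}\le C\ep^{2}$; the local $L^{2}$--$L^{\infty}$ elliptic estimate on unit subballs (with constant polynomial in $\|\widetilde c_{\ep}\|_{\infty}=O(L_{\ep}^{2})$) then yields $\|u\|_{L^{\infty}(B_{\ep L_{\ep}}(y_{j}))}\le C\ep|\ln\ep|^{c}$. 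Since $\sum_{t}e^{-|x-y_{t}|^{2}/(2\ep^{2})}\ge e^{-L_{\ep}^{2}/2}$ on $\bigcup_{j}B_{\ep L_{\ep}}(y_{j})$ and $e^{L_{\ep}^{2}/2}=e^{o(|\ln\ep|)}=\ep^{-o(1)}$, the peak region contributes $\le e^{L_{\ep}^{2}/2}C\ep|\ln\ep|^{c}=o\big(|\ln\ep|^{-(1-\theta)}\big)$ to $\|u\|_{*}$; here the extra $\ep$ in $\|u\|_{\ep}$ is exactly what forces $\widetilde u$ small in $L^{2}$, hence $u$ small near the peaks.

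Finally, the \emph{exterior estimate} on $\Om_{\ep}:=\R^{N}\setminus\bigcup_{j}B_{\ep L_{\ep}}(y_{j})$. Arguing as in the proof of Proposition \ref{inversibility}, on $\Om_{\ep}$ the zeroth order coefficient of $L_{\ep}$ obeys $V(x)-2\log(\sum_{j}U_{\ep,y_{j}})-2\ge(\min_{j}|x-y_{j}|)^{2}/\ep^{2}-C\ge\tfrac12 L_{\ep}^{2}$, so $L_{\ep}$ is coercive there with a large weight. A short computation shows that, for $\sigma$ small, $L_{\ep}\big(\sum_{j}e^{-(1-\sigma)|x-y_{j}|^{2}/(2\ep^{2})}\big)\ge\frac{\sigma}{C}\frac{(\min_{j}|x-y_{j}|)^{2}}{\ep^{2}}\sum_{j}e^{-(1-\sigma)|x-y_{j}|^{2}/(2\ep^{2})}$ on $\Om_{\ep}$ (near the nearest centre the $-\ep^{2}\De$-term gives $(1-\sigma)N-(1-\sigma)^{2}|x-y_{j}|^{2}/\ep^{2}$ and $-2\log\sum_{t}U_{\ep,y_{t}}$ gives $|x-y_{j}|^{2}/\ep^{2}+O(1)$, the other centres being exponentially negligible). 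Comparing $u$ with a suitable multiple of this supersolution on $\Om_{\ep}$ --- boundary data on $\bigcup_{j}\pa B_{\ep L_{\ep}}(y_{j})$ from the interior step, source from Step~1 --- and transferring the outcome back to the weight $\sum_{j}e^{-|x-y_{j}|^{2}/(2\ep^{2})}$ (with a separate, cruder argument in the far zone $\min_{j}|x-y_{j}|\gg\ep L_{\ep}$, where the coefficient is already $\gtrsim 1/\ep^{2}$), one obtains $\sup_{\Om_{\ep}}\big(\sum_{t}e^{-|x-y_{t}|^{2}/(2\ep^{2})}\big)^{-1}|u|\le C/L_{\ep}^{2}\le|\ln\ep|^{-(1-\theta)}$. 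Together with the interior bound this gives $\|u\|_{*}\le|\ln\ep|^{-(1-\theta)}$.

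I expect the last step to be the main obstacle: one has to produce a comparison function that is simultaneously a supersolution of $L_{\ep}$ on $\Om_{\ep}$, dominates the crude interior bound for $u$ on $\bigcup_{j}\pa B_{\ep L_{\ep}}(y_{j})$, and is comparable to the weight $\sum_{j}e^{-|x-y_{j}|^{2}/(2\ep^{2})}$ on the range of distances where $\|u\|_{*}$ is actually attained. The tension between ``supersolution of $L_{\ep}$'' (which tends to force a Gaussian of width $>\ep$) and ``comparable to the weight'' (width $=\ep$) is precisely what is negotiated at the scale $\ep L_{\ep}$, and the freedom $\theta>0$ is exactly the room needed to carry this out.
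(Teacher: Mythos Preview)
Your strategy is genuinely different from the paper's, and the difference is instructive. The paper does not split into interior/exterior regions with a maximum principle; instead it writes $-\ep^{2}\De u=(2-V)u+2u\log(\sum_{j}U_{\ep,y_{j}})+l_{\ep}+R_{\ep}(\var)+\sum a_{\ep,i,j}\pa_{x_i}U_{\ep,y_{j}}$ and inverts by the Newtonian potential, $u=u_{1}+\cdots+u_{5}$ with $u_{m}(x)=\frac{1}{\ep^{2}N(N-2)\omega_{N}}\int|z-x|^{-(N-2)}(\cdot)\,dz$. Each $u_{m}$ is then estimated in $\|\cdot\|_{*}$ directly, using for instance that $\int|z-x|^{-(N-2)}e^{-|z-y_{j}|^{2}/(2\ep^{2})}dz\le C\,\frac{\ep^{4}}{|x-y_{j}|^{2}}e^{-|x-y_{j}|^{2}/(2\ep^{2})}$ for $|x-y_{j}|\ge R\ep$, and a related bound for the dangerous piece $\frac{|z-y_{j}|^{2}}{\ep^{2}}e^{-|z-y_{j}|^{2}/(2\ep^{2})}$ coming from $\log U_{\ep,y_{j}}$. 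This produces an inequality of the schematic form $\|u\|_{*}\le\big(\tfrac{C}{R^{2}}+\tfrac{C}{R^{N-2}}+\tfrac{4(1+\alpha)}{N(N-2)\omega_{N}}\big)\|u\|_{*}+o(|\ln\ep|^{-1})$, and the key point --- where $N\ge3$ enters --- is that the constant in front of $\|u\|_{*}$ is strictly less than $1$ for $R$ large and $\alpha$ small, so one can absorb. No barrier is needed.

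Your exterior step, by contrast, has a genuine gap that you correctly flag but do not close. With $\sigma>0$ fixed, the comparison yields at best $|u|\le\frac{M}{L_{\ep}^{2}}\sum_{j}e^{-(1-\sigma)|x-y_{j}|^{2}/(2\ep^{2})}$ on $\Om_{\ep}$, hence $\big(\sum_{j}e^{-|x-y_{j}|^{2}/(2\ep^{2})}\big)^{-1}|u(x)|\le\frac{M}{L_{\ep}^{2}}\,e^{\sigma\min_{j}|x-y_{j}|^{2}/(2\ep^{2})}$, which blows up as $|x-y_{j}|/\ep\to\infty$ and already at $r=\ep L_{\ep}$ gives $\frac{M}{L_{\ep}^{2}}e^{\sigma L_{\ep}^{2}/2}$ --- not $\le|\ln\ep|^{-(1-\theta)}$ for any fixed $\sigma>0$ since $L_{\ep}^{2}=|\ln\ep|^{1-\theta/2}$. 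Letting $\sigma=\sigma_{\ep}\sim L_{\ep}^{-2}$ restores the supersolution property on $\Om_{\ep}$ and controls the ratio on any annulus $\ep L_{\ep}\le r\le K\ep L_{\ep}$ with $K$ fixed, but you then need an iteration or a separate argument for $r\gg\ep L_{\ep}$ that you have not supplied; ``a cruder argument in the far zone'' is exactly where the work is, because the weight in $\|\cdot\|_{*}$ is essentially the ground state and $L_{\ep}$ annihilates it up to $O(1)$, so no single Gaussian barrier of the same width is available. (A smaller point: your claim $|F|\le C\sum_{j}e^{-|x-y_{j}|^{2}/(2\ep^{2})}$ is not quite right for the kernel contribution, since $|\pa_{x_i}U_{\ep,y_{j}}|\sim\frac{|x-y_{j}|}{\ep^{2}}U_{\ep,y_{j}}$ and $|c_{\ep,i,j}|=O(\ep^{2})$ only gives $|x-y_{j}|\,e^{-|x-y_{j}|^{2}/(2\ep^{2})}$; the paper treats this term separately.) The paper's potential-theoretic route sidesteps all of this.
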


\begin{proof}
  From \eqref{propection} and \eqref{project lemma}, we have
  $$
  L_{\ep}u=l_{\ep}+R_{\ep}(\var)+\sum_{j=1}^{k}\sum_{i=1}^{N}a_{\ep,i,j}\frac{\partial U_{\ep,y_{j}}}{\partial x_{i}}, u\in H^{1}(\R^{N}).
  $$
Combining with the definition of $L_{\ep}$ in (\ref{linear part}), we get
\begin{equation*}
\begin{aligned}
 -\ep^{2}\Delta u=&(2-V(x))u+2u\log\big(\sum_{j=1}^{k}U_{\ep,y_{j}}\big)\\
 &+l_{\ep}+R_{\ep}(\var)+\sum_{j=1}^{k}\sum_{i=1}^{N}a_{\ep,i,j}\frac{\partial U_{\ep,y_{j}}}{\partial x_{i}}, \ u\in H^{1}(\R^{N}).
 \end{aligned}
\end{equation*}
Then we note
\begin{equation}\label{u expression}
  u(x)=u_{1}(x)+u_{2}(x)+u_{3}(x)+u_{4}(x)+u_{5}(x),
\end{equation}
with
$$
u_{1}(x)=\frac{1}{\ep^{2}}\frac{1}{N(N-2)\omega_N}\int\frac{1}{|z-x|^{N-2}}(2-V(z))u(z)dz,
$$
$$
u_{2}(x)=\frac{1}{\ep^{2}}\frac{1}{N(N-2)\omega_N}
\int\frac{2}{|z-x|^{N-2}}u(z)\log\bigg(\sum_{j=1}^{k}U_{\ep,y_{j}}(z)\bigg)dz,
$$
$$
u_{3}(x)=\frac{1}{\ep^{2}}\frac{1}{N(N-2)\omega_N}\int\frac{1}{|z-x|^{N-2}}l_{\ep}(z)dz,
$$
$$
u_{4}(x)=\frac{1}{\ep^{2}}\frac{1}{N(N-2)\omega_N}\int\frac{1}{|z-x|^{N-2}}R_{\ep}(\var(z))dz,
$$
$$
u_{5}(x)=\frac{1}{\ep^{2}}\frac{1}{N(N-2)\omega_N}\sum_{j=1}^{k}\sum_{i=1}^{N}a_{\ep,i,j}\int\frac{1}{|z-x|^{N-2}}\frac{\partial U_{\ep,y_{j}}}{\partial x_{i}}(z)dz.
$$
Now we estimate each term of (\ref{u expression}). We first give an elementary inequality
\begin{equation*}
  \sum_{j=1}^{k}a_{j}b_{j}\leq \Big(\sum_{j=1}^{k}a_{j}\Big)\cdot\Big(\sum_{j=1}^{k}b_{j}\Big),\ \ a_{j}, b_{j}>0,
\end{equation*}
which will be useful during the following process.\\
For $x\in \displaystyle\bigcap_{j=1}^{k}B^{c}_{R\ep}(y_{j})$, we have
\begin{equation*}
  \begin{aligned}
  |u_{1}(x)|
  &= \frac{1}{N(N-2)\omega_N\ep^{2}}\int\frac{1}{|z-x|^{N-2}}|2-V(z)||u(z)|dz\\
  &\leq\frac{C}{\ep^{2}}\int\frac{1}{|z-x|^{N-2}}|u(z)|dz\\
  &\leq\|u\|_{*}\frac{C}{\ep^{2}}\sum_{j=1}^{k}
  \int\frac{1}{|z-x|^{N-2}}e^{-\frac{|z-y_{j}|^{2}}{2\ep^{2}}}dz. \end{aligned}
\end{equation*}
Also $-\Delta \frac{1}{|z-x|^{N-2}}=\delta_z(x)$  in $\R^N$.
Let $w(x)=\frac{\ep^{4}}{|x-y_{j}|^{2}} e^{-\frac{|x-y_{j}|^{2}}{2\ep^{2}}}$, then
 $-\Delta w(x)\geq Ce^{-\frac{|x-y_{j}|^{2}}{2\ep^{2}}}$,
 \begin{equation}\label{estimate u1}
\int\frac{1}{|z-x|^{N-2}}e^{-\frac{|z-y_{j}|^{2}}{2\ep^{2}}}dz
\leq C\int \delta_z(x) w(z) dz=Cw(x)=C\frac{\ep^{4}}{|x-y_{j}|^{2}} e^{-\frac{|x-y_{j}|^{2}}{2\ep^{2}}}.
\end{equation}
This gives
\begin{equation*}
  \begin{aligned}
  |u_{1}(x)|\leq C\|u\|_{*}\sum_{j=1}^{k}\frac{\ep^{2}}{|x-y_{j}|^{2}} e^{-\frac{|x-y_{j}|^{2}}{2\ep^{2}}}, \ \ x\in \bigcap_{j=1}^{k}B^{c}_{R\ep}(y_{j}).
  \end{aligned}
\end{equation*}
 So we get
\begin{equation}\label{estimate u10}
\|u_{1}\|_{*}=O(\frac{1}{R^{2}})\|u\|_{*}, \ \ x\in \bigcap_{j=1}^{k}B^{c}_{R\ep}(y_{j}).
\end{equation}
For $x\in B_{R\ep}(y_{j}),\ j=1,\cdots,k$, we have
\begin{equation}\label{estimate u11}
  \begin{aligned}
  |u_{1}(x)|
=&O\Big({\ep^{-2}}\int\frac{1}{|z-x|^{N-2}}|2-V(z)||u(z)|dz\Big)\\
=&O\Big({\ep^{-2}}\Big(\int_{B^{c}_{2R\ep}(x)}+\int_{B_{2R\ep}(x)}\Big)\frac{1}{|z-x|^{N-2}}|u(z)|dz\Big)
=:A_{1}+B_{1}.
  \end{aligned}
\end{equation}
And then
\begin{equation}\label{estimate u12}
\begin{aligned}
A_{1}
=&O\Big({\ep^{-2}}\int_{B^{c}_{2R\ep}(x)}\frac{1}{|z-x|^{N-2}}|u(z)|dz\Big)=
O\Big({\ep^{-2}}\frac{1}{(2R\ep)^{N-2}}\int_{B^{c}_{2R\ep}(x)}|u(z)|dz\Big)\\
=&O\Big( \frac{\|u\|_{*}}{\ep^{N}(2R)^{N-2}}\int_{B^{c}_{R\ep}(y_{j})}
    \Big(\sum_{t=1}^{k}e^{-\frac{|z-y_{t}|^{2}}{2\ep^{2}}}\Big)dz\Big)
=O\Big(\frac{1}{R^{N-2}}\|u\|_{*}\Big),
\end{aligned}
\end{equation}
since $|z-y_{j}|\geq|z-x|-|x-y_{j}|\geq R\ep$ for $z\in B^{c}_{2R\ep}(x)$ and $x\in B_{R\ep}(y_{j})$. We find
\begin{equation}\label{estimate u13}
\begin{aligned}
B_{1}
&=O\Big(\ep^{-2}\Big(\int_{B_{2R\ep}(x)}\frac{1}{|z-x|^{p(N-2)}}\Big)^{\frac{1}{p}}
      \|u(z)\|_{2}^{\frac{2}{q}}
      \max_{z\in B_{2R\ep}(x)}|u(z)|^{1-\frac{2}{q}}\Big)\\
&=O\Big(\ep^{-2}\Big(\int_{0}^{2R\ep}\frac{r^{N-1}}{r^{N-\gamma}}\Big)^{\frac{1}{p}}
  \|u(z)\|_{\ep}^{\frac{2}{q}}\Big)
=O\Big( \ep^{\frac{\gamma}{p}-2}
  \cdot\ep^{(\frac{N}{2}+1-\tau)\frac{2}{q}}\Big)=O\Big(\ep^{\frac{4-2\gamma}{N-\gamma}(1-\tau)}\Big),
\end{aligned}
\end{equation}
where $\gamma>0$ small,\ $p=\frac{N-\gamma}{N-2}$ and $q=\frac{N-\gamma}{2-\gamma}$. So, by \eqref{estimate u11}-\eqref{estimate u13}, we know
\begin{equation}\label{estimate 1u101}
|u_{1}(x)|\Big(\sum_{j=1}^{k}e^{-\frac{|x-y_{j}|^{2}}{2\ep^{2}}}\Big)^{-1}=
O\big( \frac{1}{R^{N-2}}\|u\|_{*}+\ep^{\frac{2(2-\gamma)}{N-\gamma}(1-\tau)}\Big),\ \mbox{for}~\ x\in \bigcup_{j=1}^{k}B_{R\ep}(y_{j}).
\end{equation}
We conclude from \eqref{estimate u10} and \eqref{estimate 1u101} that
\begin{equation}\label{u1}
  \|u_{1}\|_{*}=O\Big(\frac{1}{R^{2}}\|u\|_{*}+
  \frac{1}{R^{N-2}}\|u\|_{*}+\ep^{\frac{2(2-\gamma)}{N-\gamma}(1-\tau)}\Big),\ x\in\R^{N} \mbox{and}~R\ \text{large enough}.
\end{equation}

Now we estimate $u_{2}(x)$. By a fact that for any $\alpha>0$,
\[
\Big|\log\Big(\sum_{s=1}^{k}U_{\ep,y_{s}}(z)\Big)\Big|\leq(1+\alpha)\Big|\log U_{\ep,y_{t}}(z)\Big|,\ \ \text{for any}\ \  t\in\{1,\cdots,k\},
\]
which gives
\[
\begin{aligned}
|u_{2}(x)|=&\frac{1}{\ep^{2}}\frac{1}{N(N-2)\omega_N}\int\frac{2}{|z-x|^{N-2}}u(z)
                      \Big|\log\Big(\sum_{j=1}^{k}U_{\ep,y_{j}}(z)\Big)\Big|dz\\
              \leq&\frac{1}{\ep^{2}}\frac{(1+\alpha)\|u\|_{*}}{N(N-2)\omega_N}\int\frac{2}{|z-x|^{N-2}}
                      \Big(\sum_{j=1}^{k}e^{-\frac{|z-y_{j}|^{2}}{2\ep^{2}}}\Big)
                      \Big|\log U_{\ep,y_{t}}(z)\Big|dz\\
             =&\frac{1}{\ep^{2}}\frac{(1+\alpha)\|u\|_{*}}{N(N-2)\omega_N}\int\frac{2}{|z-x|^{N-2}}
                      \Big(\sum_{j=1}^{k}e^{-\frac{|z-y_{j}|^{2}}{2\ep^{2}}}\Big)
                      \Big|\frac{V(y_{t})+N}{2}-\frac{|z-y_{t}|^{2}}{2\ep^{2}}\Big|dz  \\
              \leq&C\Big( {\ep^{-2}}\int\frac{2}{|z-x|^{N-2}}
                      \Big(\sum_{j=1}^{k}e^{-\frac{|z-y_{j}|^{2}}{2\ep^{2}}}\Big)dz\Big)\\
                    &+ \frac{1}{\ep^{2}}\frac{(1+\alpha)\|u\|_{*}}{N(N-2)\omega_N}
                    \sum_{j=1}^{k}\int\frac{1}{|z-x|^{N-2}}
                      \frac{|z-y_{j}|^{2}}{\ep^{2}}e^{-\frac{|z-y_{j}|^{2}}{2\ep^{2}}}dz=:u_{21}+u_{22}.
\end{aligned}
\]
For $x\in \displaystyle\bigcap_{j=1}^{k}B^{c}_{R\ep}(y_{j})$, by \eqref{estimate u1}, we have
\begin{equation}\label{u21}
\|u_{21}\|_{*}=O\Big(\frac{1}{R^{2}}\|u\|_{*}\Big),\ \ x\in \bigcap_{j=1}^{k}B^{c}_{R\ep}(y_{j}).
\end{equation}
while, if we denote $ \frac{(1+\alpha)}{N(N-2)\omega_N}=C_{N,\alpha}$,
\[
\begin{aligned}
u_{22}(x)=&C_{N,\alpha}\|u\|_{*}\sum_{j=1}^{k}\int\frac{2}{|z-x|^{N-2}}
                       \frac{|z-y_{j}|^{2}}{\ep^{4}}e^{-\frac{|z-y_{j}|^{2}}{2\ep^{2}}}dz\\
=&C_{N,\alpha}\|u\|_{*}\sum_{j=1}^{k}\Big(\int_{B^{c}_{R\ep}(y_{j})}+\int_{B_{R\ep}(y_{j})}\Big)\frac{2}{|z-x|^{N-2}}
                       \frac{|z-y_{j}|^{2}}{\ep^{4}}e^{-\frac{|z-y_{j}|^{2}}{2\ep^{2}}}dz
                       \\=:&u_{221}(x)+u_{222}(x).
\end{aligned}
\]
Take $R^{2}>2N$, for $z\in B^{c}_{R\ep}(y_{j})$, we get
\[\begin{aligned}
\frac{|z-y_{j}|^{2}}{\ep^{4}}e^{-\frac{|z-y_{j}|^{2}}{2\ep^{2}}}
=&\Delta e^{-\frac{|z-y_{j}|^{2}}{2\ep^{2}}}+\frac{N}{\ep^{2}}e^{-\frac{|z-y_{j}|^{2}}{2\ep^{2}}}\leq \Delta e^{-\frac{|z-y_{j}|^{2}}{2\ep^{2}}}
       + \frac{|z-y_{j}|^{2}}{2\ep^{4}}e^{-\frac{|z-y_{j}|^{2}}{2\ep^{2}}},
\end{aligned}\]
so we have
\[ \frac{|z-y_{j}|^{2}}{2\ep^{4}}e^{-\frac{|z-y_{j}|^{2}}{2\ep^{2}}}
\leq\Delta e^{-\frac{|z-y_{j}|^{2}}{2\ep^{2}}}.
\]
Then, we find
\[
u_{221}(x)
\leq C_{N,\alpha}\|u\|_{*}\sum_{j=1}^{k}\int_{B^{c}_{R\ep}(y_{j})}\frac{4}{|z-x|^{N-2}}\Delta e^{-\frac{|z-y_{j}|^{2}}{2\ep^{2}}}dz
\leq 4C_{N,\alpha}\|u\|_{*}\Big(\sum_{j=1}^{k}e^{-\frac{|x-y_{j}|^{2}}{2\ep^{2}}}\Big).
\]
On the other hand, by \eqref{estimate u1},
\[
\begin{aligned}
u_{222}(x)
              \leq&C_{N,\alpha}\|u\|_{*}\sum_{j=1}^{k}\int_{B_{R\ep}(y_{j})}\frac{1}{|z-x|^{N-2}}
                       \frac{R^{2}}{\ep^{2}}e^{-\frac{|z-y_{j}|^{2}}{2\ep^{2}}}dz\\
             =&\frac{C_{N,\alpha}R^{2}}{C(N,R)}\|u\|_{*}\sum_{j=1}^{k}\frac{1}{\ep^{2}}
                 \int_{B_{R\ep}(y_{j})}\frac{1}{|z-x|^{N-2}}\Delta\left(\frac{\ep^{4}}{|z-y_{j}|^{2}} e^{-\frac{|z-y_{j}|^{2}}{2\ep^{2}}}\right)dz\\
             =&\frac{C_{N,\alpha} R^{2}}{C(N,R)}\|u\|_{*}\sum_{j=1}^{k}
               \int_{B_{R\ep}(y_{j})}\delta_{z}(x)\frac{\ep^{2}}{|z-y_{j}|^{2}} e^{-\frac{|z-y_{j}|^{2}}{2\ep^{2}}}dz=0.
\end{aligned}
\]
As a result,
\begin{equation}\label{u222}
\|u_{22}\|_{*}\leq \frac{4(1+\alpha)}{N(N-2)\omega_N}\|u\|_{*}.
\end{equation}
Combing \eqref{u21} and \eqref{u222}, we get
\begin{equation}\label{u20}
\|u_{2}\|_{*}\leq\Big( \frac{C}{R^{2}}+\frac{4(1+\alpha)}{N(N-2)\omega_N}\Big)\|u\|_{*},\ \ R\ \ \text{large and} \ x\in \bigcap_{j=1}^{k}B^{c}_{R\ep}(y_{j}).
\end{equation}

For $x\in B_{R\ep}(y_{j}),\ j=1,\cdots,k$, similar to \eqref{estimate u11}-\eqref{estimate u13}, we can get
\begin{equation}\label{u200}
\big|u_{2}(x)\big|\Big(\sum_{j=1}^{k}e^{-\frac{|x-y_{j}|^{2}}{2\ep^{2}}}\Big)^{-1} \leq \frac{C}{R^{N-2}}\|u\|_{*}+ CR^{2}\ep^{\frac{2(2-\gamma)}{N-\gamma}(1-\tau)},\ \ x\in \bigcup_{j=1}^{k}B_{R\ep}(y_{j}).
\end{equation}
By \eqref{u20} and \eqref{u200}, we finally get
\begin{equation}\label{u2}
  \|u_{2}\|_{*}\leq\Big( \frac{C}{R^{2}}+ \frac{C}{R^{N-2}}+\frac{2(1+\alpha)}{N(N-2)\omega_N}\Big)\|u\|_{*}
  +CR^{2}\ep^{\frac{2(2-\gamma)}{N-\gamma}(1-\tau)},
\end{equation}
for suitably large $R$ and $\ep$ small.

\smallskip

Next we estimate $u_{3}$.
Recall (\ref{zero order}), we denote
\begin{equation*}
\begin{aligned}
  l_{\ep}&=\sum_{j=1}^{k}\big(V(y_{j})-V(x)\big)U_{\ep,y_{j}}\\
            &\ \ \ +2\Big(\big(\sum_{j=1}^{k}U_{\ep,y_{j}}\big)
    \log\big(\sum_{t=1}^{k}U_{\ep,y_{t}}\big)-\sum_{j=1}^{k}(U_{\ep,y_{j}}\log U_{\ep,y_{j}})\Big)\\
    &=:l_{\ep1}+l_{\ep2},
    \end{aligned}
\end{equation*}
and
\[
\begin{aligned}
u_{3}(x)&=\frac{1}{N(N-2)\omega_N\ep^{2}}\int\frac{1}{|z-x|^{N-2}}l_{\ep}(z)dz\\
              &=\frac{1}{N(N-2)\omega_N\ep^{2}}\Big[\sum_{j=1}^{k}\int\frac{1}{|z-x|^{N-2}}l_{\ep1}(z)dz
              + \sum_{j=1}^{k}\int\frac{1}{|z-x|^{N-2}}l_{\ep2}(z)dz\Big]\\
           &=:u_{31}(x)+u_{32}(x).
\end{aligned}
\]
For $x\in \displaystyle\bigcap_{j=1}^{k}B^{c}_{\ep\sqrt{|\ln\ep|}}(y_{j})$, we find
\[
\begin{aligned}
|u_{31}(x)|
=&O\Big({\ep^{-2}}\sum_{j=1}^{k}\int\frac{1}{|z-x|^{N-2}}|V(y_{j})-V(z)|U_{\ep,y_{j}}(z)dz\Big)\\
=&O\Big({\ep^{-2}}\sum_{j=1}^{k}\int\frac{1}{|z-x|^{N-2}}U_{\ep,y_{j}}(z)dz\Big).
\end{aligned}
\]
Then by \eqref{estimate u1},
we get
\[
|u_{31}(x)|\Big(\sum_{j=1}^{k}e^{-\frac{|x-y_{j}|^{2}}{2\ep^{2}}}\Big)^{-1}= O\Big(\sum_{j=1}^{k}\frac{\ep^{2}}{|x-y_{j}|^{2}}\Big)=O\Big(\frac{1}{|\ln\ep|}\Big), \ \  \ x\in \bigcap_{j=1}^{k}B^{c}_{\ep\sqrt{|\ln\ep|}}(y_{j}).
\]
Similarly, as
\[
\begin{aligned}
|u_{32}(x)|
=&O\Big({\ep^{-2}}\int\frac{1}{|z-x|^{N-2}}\sum_{j=1}^{k}U_{\ep,y_{j}}(z)
    \Big(\log\Big(\sum_{t=1}^{k}U_{\ep,y_{t}}(z)\Big)-\log U_{\ep,y_{j}}(z)\Big)dz\\
=&O\Big({\ep^{-2}}\int\frac{1}{|z-x|^{N-2}}\sum_{j=1}^{k}
U_{\ep,y_{j}}(z)\cdot\frac{\sum_{t=1}^{k}U_{\ep,y_{t}}(z)}{U_{\ep,y_{j}}(z)}dz\Big)\\
=&O\Big({\ep^{-2}}\sum_{t=1}^{k}\int\frac{k}{|z-x|^{N-2}}U_{\ep,y_{t}}(z)dz\Big),
\end{aligned}
\]
we also have
\[
|u_{32}(x)|\Big(\sum_{j=1}^{k}e^{-\frac{|x-y_{j}|^{2}}{2\ep^{2}}}\Big)^{-1}=O\Big(\frac{1}{|\ln\ep|}\Big), \ \  \ x\in \bigcap_{j=1}^{k}B^{c}_{\ep\sqrt{|\ln\ep|}}(y_{j}).
\]
These give us that
\begin{equation}\label{uu3}
\|u_{3}\|_{*}=O\big(\|u_{31}\|_{*}\big)+O\big(\|u_{32}\|_{*}\big)=O\Big(\frac{1}{|\ln\ep|}\Big), \ \  \ x\in \bigcap_{j=1}^{k}B^{c}_{\ep\sqrt{|\ln\ep|}}(y_{j}).
\end{equation}
Next, we consider the case $x\in B_{\ep\sqrt{|\ln\ep|}}(y_{j}),\ j=1,\cdots,k$.
\[
\begin{aligned}
|u_{3}(x)|
=&O\Big(\ep^{-2}\|l_{\ep}\|_{*} \sum_{j=1}^{k}
     \Big(\int_{B^{c}_{2\ep\sqrt{|\ln\ep|}}(x)}+
    \int_{B_{2\ep\sqrt{|\ln\ep|}}(x)}\Big)\frac{1}{|z-x|^{N-2}}
       e^{-\frac{|z-y_{j}|^{2}}{2\ep^{2}}}dz.
\end{aligned}
\]
Also by \eqref{l**}, we have
\[
\begin{aligned}
{\|l_{\ep}\|_{*}}&{\ep^{-2}}\sum_{j=1}^{k}
     \int_{B^{c}_{2\ep\sqrt{|\ln\ep|}}(x)}\frac{1}{|z-x|^{N-2}}
        e^{-\frac{|z-y_{j}|^{2}}{2\ep^{2}}}dz\\
=& O\Big(\frac{\|l_{\ep}\|_{*}}{\ep^{N}|\ln\ep|^{\frac{N-2}{2}}}
\sum_{j=1}^{k}
     \int_{B^{c}_{\ep\sqrt{|\ln\ep|}}(y_{j})}e^{-\frac{|z-y_{j}|^{2}}{2\ep^{2}}}dz\Big)=
O\Big(\frac{1}{|\ln\ep|^{\frac{N-2}{2}}}\Big),
\end{aligned}
\]
and
\[
\begin{aligned}
{\|l_{\ep}\|_{*}}&{\ep^{-2}}\sum_{j=1}^{k}
    \int_{B_{2\ep\sqrt{|\ln\ep|}}(x)}\frac{1}{|z-x|^{N-2}}
       e^{-\frac{|z-y_{j}|^{2}}{2\ep^{2}}}dz\\
=&O\Big( \|l_{\ep}\|_{*}{\ep^{-2}}\sum_{j=1}^{k}
    \Big(\int_{B_{2\ep\sqrt{|\ln\ep|}}(x)}\frac{1}{|z-x|^{p(N-2)}}\Big)^{\frac{1}{p}}
    \Big(\int_{B_{2\ep\sqrt{|\ln\ep|}}(y_{j})}e^{-\frac{q|z-y_{j}|^{2}}{2\ep^{2}}}\Big)^{\frac{1}{q}}\Big)\\
=&O\Big(\ep^{\frac{2(2-\gamma)}{N-\gamma}} |\ln\ep|^{\frac{\gamma(N-2)}{N-\gamma}}\Big),
\end{aligned}
\]
where $\gamma>0$ small,\ $p=\frac{N-\gamma}{N-2}$ and $q=\frac{N-\gamma}{2-\gamma}$.
So, we find
\begin{equation}\label{uu31}
\|u_{3}\|_{*}=O\Big( \frac{\ep^{\frac{1}{2}}}{|\ln\ep|^{\frac{N-2}{2}}}+\ep^{\frac{2(2-\gamma)}{N-\gamma}+\frac{1}{2}} |\ln\ep|^{\frac{\gamma(N-2)}{N-\gamma}}\Big),\ \ \ \ \ \ x\in B_{\ep\sqrt{|\ln\ep|}}(y_{j}),\ j=1,\cdots,k.
\end{equation}
From \eqref{uu3} and \eqref{uu31}, we get
\begin{equation}\label{u3}
  \|u_{3}\|_{*}=O\Big(\frac{1}{|\ln\ep|}\Big).
\end{equation}
By using \eqref{l**} and a similar estimate to $u_{3}$, we can get
\begin{equation}\label{u4}
  \|u_{4}\|_{*}=O\Big(\|\varphi\|_{*}^{2}\Big).
\end{equation}

Now similar to the estimate of $u_1$, we estimate $u_{5}$. First,
by $(\ref{estimates of linear part})$, we know
 \[
 |a_{\ep,i,j}|=O\Big(\ep^{-\frac{N}{2}+1}\|L_{\ep}u\|_{\ep}\Big)=O\Big(\ep^{-\frac{N}{2}+1}\|l_{\ep}\|_{\ep}\Big) = O\big(\ep^{2}\big).
 \]
On the other hand,  we have
\[
\begin{aligned}
|u_{5}(x)|=
&O\Big({\ep^{-4}}\sum_{j=1}^{k}\sum_{i=1}^{N}|a_{\ep,i,j}|\int  \frac{|z-y_{j}|}{|z-x|^{N-2}}
   e^{-\frac{|z-y_{j}|^{2}}{2\ep^{2}}}dz\Big)\\=
&O\Big({\ep^{-2}}\sum_{j=1}^{k} \int  \frac{|z-y_{j}|}{|z-x|^{N-2}}
   e^{-\frac{|z-y_{j}|^{2}}{2\ep^{2}}}dz\Big).
\end{aligned}
\]
For $x\in \displaystyle\bigcap_{j=1}^{k}B^{c}_{R\ep}(y_{j})$, let $w_1(x)=\frac{\ep^{4}}{|x-y_{j}|} e^{-\frac{|x-y_{j}|^{2}}{2\ep^{2}}}$, we find
 $-\Delta w_1(x)\geq C|x-y_{j}|e^{-\frac{|x-y_{j}|^{2}}{2\ep^{2}}}$ and then
 \begin{equation}\label{estimate u5}
\int\frac{|z-y_{j}|}{|z-x|^{N-2}}e^{-\frac{|z-y_{j}|^{2}}{2\ep^{2}}}dz
=O\Big(\frac{\ep^{4}}{|x-y_{j}|} e^{-\frac{|x-y_{j}|^{2}}{2\ep^{2}}}\Big).
\end{equation}
This gives
\begin{equation}\label{8-11}
  \begin{aligned}
  |u_{5}(x)|\leq C \sum_{j=1}^{k}\frac{\ep^{2}}{|x-y_{j}| } e^{-\frac{|x-y_{j}|^{2}}{2\ep^{2}}}\leq
 \frac{C\ep}{R}\sum_{j=1}^{k} e^{-\frac{|x-y_{j}|^{2}}{2\ep^{2}}}, \ \ x\in \bigcap_{j=1}^{k}B^{c}_{R\ep}(y_{j}).
  \end{aligned}
\end{equation}
For $x\in \displaystyle\bigcup_{j=1}^{k}B_{R\ep}(y_{j})$,  we have
\begin{equation}\label{estimate u51}
  |u_{5}(x)|
=O\Big({\ep^{-2}}\sum_{j=1}^{k}\Big(\int_{B^{c}_{2R\ep}(x)}+\int_{B_{2R\ep}(x)}\Big)   \frac{|z-y_{j}|}{|z-x|^{N-2}}
   e^{-\frac{|z-y_{j}|^{2}}{2\ep^{2}}}dz\Big)
=:A_{2}+B_{2},
\end{equation}
and then
\begin{equation}\label{estimate u52}
\begin{aligned}
A_{2}
=&O\Big({\ep^{-2}}\sum_{j=1}^{k}\int_{B^{c}_{2R\ep}(x)}\frac{|z-y_{j}|}{|z-x|^{N-2}}
   e^{-\frac{|z-y_{j}|^{2}}{2\ep^{2}}}dz\Big)\\=&
O\Big({\ep^{-2}}\sum_{j=1}^{k}\frac{1}{(2R\ep)^{N-2}}\int_{B^{c}_{R\ep}(y_{j})}|z-y_{j}|
   e^{-\frac{|z-y_{j}|^{2}}{2\ep^{2}}}dz\Big)
=O\Big(\frac{\ep}{R^{N-2}e^{R^2}}\Big).
\end{aligned}
\end{equation}
Also, we find
\begin{equation}\label{estimate u53}
\begin{aligned}
B_{2}
&=O\Big(\frac{R}{\ep}\int_{B_{2R\ep}(x)}\frac{1}{|z-x|^{N-2}}\Big)
=O\Big(R^3 \ep\Big).
\end{aligned}
\end{equation}
Then by \eqref{estimate u51}-\eqref{estimate u53}, we know
\begin{equation}\label{estimate u101}
|u_{5}(x)|\Big(\sum_{j=1}^{k}e^{-\frac{|x-y_{j}|^{2}}{2\ep^{2}}}\Big)^{-1}=O\Big(R^3e^{\frac{R^2}{2}} \ep\Big),\ \mbox{for}~\ x\in \bigcup_{j=1}^{k}B_{R\ep}(y_{j}).
\end{equation}
 So from \eqref{8-11} and \eqref{estimate u101}, we find
 \begin{equation}\label{u5}
 \|u_{5}\|_{*}=O(\ep).
 \end{equation}
Above all, from (\ref{u expression}), (\ref{u1}), (\ref{u2}), (\ref{u3}), (\ref{u4}), (\ref{u5}), we get
\[
\|u\|_{*}=O(\frac{1}{|\ln\ep|})\leq \frac{1}{|\ln\ep|^{1-\theta}}.
\]
\end{proof}
\begin{proposition} \label{prop: reduction map}
Assume $N\geq3$. Let $\delta>0$ be small such that $B_{\delta}(\xi_{i})\cap B_{\delta}(\xi_{j})=\emptyset$ for $i,j=1,\cdots,k$, $i\neq j$, there exists $\ep_{0}>0$ such that for any $\ep\in(0,\ep_{0}]$, $y_{j}\in B_{\delta}(\xi_{j})$, there is a unique map $\ensuremath{\var_{\ep,y}:B_{\delta}(\xi_{j})\to H_{\varepsilon}}$ with
$\ensuremath{y\mapsto\var_{\ep,y}\in E_{\ep,y}}$ satisfying (\ref{project equation}), where $y=(y_{1},\cdots,y_{k})$.
 Moreover,
\begin{equation}\label{estimate of small term}
\|\var_{\ep,y}\|_{\ep}\le C\|l_{\ep}\|_{\ep}
\le C\Big(\sum_{j=1}^{k}\big|\nabla V(y_{j})\big|\ep^{\frac{N}{2}+1}+\ep^{\frac{N}{2}+2}\Big),
\end{equation}
and
\begin{equation}
\|\var\|_{*}<\frac{1}{|\ln\ep|^{1-\theta}}.\label{*estimate}
\end{equation}
\end{proposition}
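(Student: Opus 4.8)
The plan is to recast \eqref{project equation} as a fixed point problem and apply the contraction mapping principle in a set that enforces \emph{both} available norms at once. By Proposition \ref{inversibility}, for $\ep\le\ep_1$, $\de\le\de_1$ and $y_j\in B_\de(\xi_j)$ the operator $P_\ep L_\ep$ is bijective on $E_{\ep,y}$ with $\|P_\ep L_\ep\var\|_\ep\ge\rho\|\var\|_\ep$, so \eqref{project equation} is equivalent to $\var=\mathcal T_{\ep,y}(\var)$, where
\[
\mathcal T_{\ep,y}(\var):=(P_\ep L_\ep)^{-1}P_\ep\big(l_\ep+R_\ep(\var)\big),\qquad \var\in E_{\ep,y}.
\]
I would iterate $\mathcal T_{\ep,y}$ on the set
\[
S:=\Big\{\var\in E_{\ep,y}\ :\ \|\var\|_\ep\le A\,\|l_\ep\|_\ep,\ \ \|\var\|_*\le\tfrac{1}{|\ln\ep|^{1-\theta}}\Big\},
\]
where $A$ is a large constant independent of $\ep$ and $y$. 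Since $|\nabla V(y_j)|$ is bounded, Lemma \ref{lem: estimate for the first order} shows the first constraint forces $\|\var\|_\ep=O(\ep^{N/2+1})$, so every $\var\in S$ satisfies the hypotheses needed to apply Lemma \ref{lem: error estimates} (indeed $\var\in\wp_\ep$) and Proposition \ref{contract lemma}. Endowed with the distance induced by $\|\cdot\|_\ep$, $S$ is a nonempty complete metric space.

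\textbf{Step 1 (self-map).} For $\var\in S$, Lemma \ref{lem: error estimates} gives $\|R_\ep(\var)\|_\ep=O\big(|\ln\ep|^{-(1-\theta)}\big)\|\var\|_\ep$, so there is a constant $C$ (depending only on $\rho$ and on the norm of $P_\ep$) with
\[
\|\mathcal T_{\ep,y}(\var)\|_\ep\le C\big(\|l_\ep\|_\ep+\|R_\ep(\var)\|_\ep\big)\le C\|l_\ep\|_\ep\Big(1+CA\,|\ln\ep|^{-(1-\theta)}\Big)\le A\,\|l_\ep\|_\ep
\]
provided $A>2C$ and $\ep$ is small. Moreover $u:=\mathcal T_{\ep,y}(\var)$ solves $P_\ep L_\ep u=l_\ep+R_\ep(\var)$ with $\|u\|_\ep,\|\var\|_\ep=O(\ep^{N/2+1})$ and $\|\var\|_*\le|\ln\ep|^{-(1-\theta)}$, so Proposition \ref{contract lemma} (whose proof in fact yields the sharper $\|u\|_*=O(1/|\ln\ep|)$) gives $\|u\|_*\le|\ln\ep|^{-(1-\theta)}$. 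Hence $\mathcal T_{\ep,y}(S)\subset S$.

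\textbf{Step 2 (contraction, existence, estimates).} For $\var_1,\var_2\in S$, differentiating \eqref{nonlinear term} in $\var$ gives, with $W:=\sum_{j=1}^{k} U_{\ep,y_j}$, the identity $\tfrac{\D}{\D t}R_\ep\big(t\var_1+(1-t)\var_2\big)=2\big(\log(W+t\var_1+(1-t)\var_2)-\log W\big)(\var_1-\var_2)$, and since $\|\var_i\|_*$ is small one has $|\log(W+\var)-\log W|=O\big(|\var|\,W^{-1}\big)$; feeding this into the computation already used for Lemma \ref{lem: error estimates} yields
\[
\big\|R_\ep(\var_1)-R_\ep(\var_2)\big\|_\ep=O\big((\|\var_1\|_*+\|\var_2\|_*)\,\|\var_1-\var_2\|_\ep\big)=O\big(|\ln\ep|^{-(1-\theta)}\big)\,\|\var_1-\var_2\|_\ep,
\]
whence $\|\mathcal T_{\ep,y}(\var_1)-\mathcal T_{\ep,y}(\var_2)\|_\ep\le\tfrac12\|\var_1-\var_2\|_\ep$ for $\ep$ small. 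The Banach fixed point theorem then gives, for each admissible $y$, a unique $\var_{\ep,y}\in S$ solving \eqref{project equation}; uniqueness in $S$ together with the a priori bound of Proposition \ref{inversibility} yields uniqueness in the stated class, and applying the implicit function theorem to $(y,\var)\mapsto\var-\mathcal T_{\ep,y}(\var)$ shows $y\mapsto\var_{\ep,y}$ is (at least) continuous. Finally, testing $P_\ep L_\ep\var_{\ep,y}=l_\ep+R_\ep(\var_{\ep,y})$ against $\var_{\ep,y}$ and using Proposition \ref{inversibility} gives $\rho\|\var_{\ep,y}\|_\ep\le\|l_\ep\|_\ep+o(1)\|\var_{\ep,y}\|_\ep$, hence $\|\var_{\ep,y}\|_\ep\le C\|l_\ep\|_\ep$; with Lemma \ref{lem: estimate for the first order} this is \eqref{estimate of small term}, while inspecting Proposition \ref{contract lemma} with $\var=\var_{\ep,y}$ gives $\|\var_{\ep,y}\|_*=O(1/|\ln\ep|)<|\ln\ep|^{-(1-\theta)}$ since $\theta>0$, which is \eqref{*estimate}.

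\textbf{Main obstacle.} The delicate point is not the abstract fixed-point scheme but the need to propagate control in \emph{two} norms simultaneously: the contraction must be run in $\|\cdot\|_\ep$, where $P_\ep L_\ep$ is invertible, yet the Lipschitz bound for the logarithmic term $R_\ep$ only closes when the competitors already lie in $\wp_\ep$, i.e. are controlled in $\|\cdot\|_*$ — and this $\|\cdot\|_*$-control is recovered from the $\|\cdot\|_\ep$-control only through the Newtonian-potential (elliptic regularity) estimates of Proposition \ref{contract lemma}. Arranging $S$ so that it is invariant both for the $\|\cdot\|_\ep$-contraction and for the $\|\cdot\|_*$-feedback is precisely where the weight $\big(\sum_{j}e^{-|x-y_j|^2/(2\ep^2)}\big)^{-1}$ in \eqref{new norm} does the essential work, taming the singularity of the nonlinearity $u\log u^2$ near $u=0$.
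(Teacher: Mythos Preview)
Your argument is correct and follows the same contraction-mapping strategy as the paper: work on a set $S$ constrained in both $\|\cdot\|_\ep$ and $\|\cdot\|_*$, use Proposition~\ref{inversibility} and Lemmas~\ref{lem: estimate for the first order}--\ref{lem: error estimates} for the $\|\cdot\|_\ep$-estimates, and invoke Proposition~\ref{contract lemma} to recover the $\|\cdot\|_*$-invariance. The one noteworthy difference is the choice of metric for the contraction: the paper endows $S$ with $\|\cdot\|_*$ and proves the contraction there (writing $\|B\var_1-B\var_2\|_*\le C\|R_\ep(\var_1)-R_\ep(\var_2)\|_*$, which implicitly relies on a $\|\cdot\|_*$-bound for $(P_\ep L_\ep)^{-1}$ acting on such differences), whereas you contract in $\|\cdot\|_\ep$, where the bounded invertibility of $P_\ep L_\ep$ is already established by Proposition~\ref{inversibility}. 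Your route is arguably cleaner on this point; the small price is that you must check that the $\|\cdot\|_*$-constraint in $S$ is closed under $\|\cdot\|_\ep$-convergence (so that $S$ is complete), which you asserted without comment---it follows since $H^1$-convergence implies a.e.\ convergence along a subsequence, hence the pointwise bound $|\var_n|\le |\ln\ep|^{-(1-\theta)}\sum_j e^{-|x-y_j|^2/(2\ep^2)}$ passes to the limit.
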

\begin{proof}
  By Proposition \ref{inversibility}, we can rewrite (\ref{project equation}) as
  \begin{equation*}\label{rewrite form}
 \var=B\var:=(P_{\ep}L_{\ep})^{-1}l_{\ep}+(P_{\ep}L_{\ep})^{-1}R_{\ep}(\var).
 \end{equation*}
  It follows from Proposition \ref{inversibility}  and (\ref{estimates of linear part}) that
\[
\|(P_{\ep}L_{\ep})^{-1}l_{\ep}\|_{\ep}\leq C\|l_{\ep}\|_{\ep}
\leq C\ep^{\frac{N}{2}+1}.
\]
Now we will apply the contraction mapping theorem in the set
\begin{equation}\label{SS}
S:=\left\{\var:\var\in E_{\ep,y}, \|\var\|_{\ep}\leq\ep^{\frac{N}{2}+1-\tau}, \ \|\var\|_{*}\leq\frac{1}{|\ln\ep|^{1-\theta}}\right\}
\end{equation}
endowed with the norm $\|\cdot\|_{*}$, where $\tau,\theta>0$ are some fixed small constants.

Then for any $\var_{1},\ \var_{2}\in S$, it holds
\[
\begin{aligned}
\|B\var_{1}-B\var_{2}\|_{*}
&\leq C\|R_{\ep}(\var_{1})-R_{\ep}(\var_{2})\|_{*} =C\|R'_{\ep}(\var_{1}+\theta(\var_{2}-\var_{1}))\cdot(\var_{1}-\var_{2})\|_{*}\\
&=C\|\log\left(1+\frac{\var_{1}+\theta(\var_{2}-\var_{1})}{\sum_{j=1}^{k}U_{\ep,y_{j}}}\right)\cdot(\var_{1}-\var_{2})\|_{*}\\
&\leq C\sum_{i=1}^{2}\|\var_{i}\|_{*}\cdot\|\var_{1}-\var_{2}\|_{*} \leq\frac{C}{|\ln\ep|^{1-\theta}}\|\var_{1}-\var_{2}\|_{*}
\leq \frac{1}{2}\|\var_{1}-\var_{2}\|_{*},
\end{aligned}
\]
where $\theta\in[0,1]$.
For any $\varphi\in  E_{\ep,y}$, by Lemma \ref{lem: estimate for the first order} and Lemma \ref{lem: error estimates}, we get
\begin{equation*}\label{concract}
\begin{aligned}
 &\|B\var\|_{\ep}
\leq C\|l_{\ep}\|_{\ep}+C\|R_{\ep}(\var)\|_{\ep} \leq C\ep^{\frac{N}{2}+1}+\frac{C}{|\ln\ep|^{1-\theta}}\|\var\|_{\ep}
 \leq \ep^{\frac{N}{2}+1-\tau}.
 \end{aligned}
\end{equation*}
On the other hand, applying Lemma \ref{contract lemma} to $u=B\var$, we have
\[
\|B\var\|_{*}\leq\frac{1}{|\ln\ep|^{1-\theta}}.
\]
So we get $B\var\in S$.
Then by the contraction mapping theorem, we conclude that for $\varepsilon,\ \delta$ sufficiently small, there exists
$\var_{\ep}\in E_{\ep, y}$ depending on $y$ and $\ep$, satisfying $
\var_{\ep}=B\var_{\ep}$.
Moreover, we know
\[
\|\var_{\ep}\|_{\ep}
=O\Big(\|l_{\ep}\|_{\ep}+ \|R_{\ep}(\var)\|_{\ep}\Big)
=O\Big(\|l_{\ep}\|_{\ep}+\frac{1}{|\ln\ep|^{1-\theta}}\|\var\|_{\ep}\Big),
\]
which gives
\[
\|\var_{\ep}\|_{\ep}=O\big(\|l_{\ep}\|_{\ep}\big)=O\Big(\sum_{j=1}^{k}\big|\nabla V(y_{j})\big|\ep^{\frac{N}{2}+1}+\ep^{\frac{N}{2}+2}\Big).
\]
\end{proof}

\section{Proof of Theorem \ref{thm: main reuslt-existence}}\label{Proof of the first Theorem}
 Theorem \ref{thm: main reuslt-existence} can be deduced from the following result.
\begin{theorem}\label{direct result}
Assume that $(V_{1})$ and $(V_{2})$ holds, $N\geq3$. Then, for $\ep>0$ sufficiently small, equation
\eqref{eq: Kirchhoff} has a solution of the form
\[
u_{\ep}=\sum_{j=1}^{k}U_{\ep,y_{\ep,j}}+\var_{\ep},
\]
for some $y_{\ep,j}\in B_{\delta}(\xi_{j})$, $\|\var_{\ep}\|_{\ep}=O(\ep^{\frac{N}{2}+1})$
 and $\|\var_{\ep}\|_{*}\leq\frac{1}{|\ln\ep|^{1-\theta}}$ with some small $\theta>0$.
\end{theorem}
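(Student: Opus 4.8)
The plan is to carry out Step (2), i.e.\ to choose $y=(y_{1},\dots,y_{k})$ with $y_{j}\in B_{\delta}(\xi_{j})$ so that all the constants in \eqref{new equation} vanish. By Proposition \ref{prop: reduction map}, for every such $y$ we have $\var_{\ep,y}\in E_{\ep,y}$ solving \eqref{project equation}, hence numbers $a_{\ep,i,j}=a_{\ep,i,j}(y)$ for which
\[
u_{\ep}:=\sum_{j=1}^{k}U_{\ep,y_{j}}+\var_{\ep,y}\quad\text{solves}\quad
-\ep^{2}\Delta u_{\ep}+V(x)u_{\ep}-u_{\ep}\log u_{\ep}^{2}=\sum_{j=1}^{k}\sum_{i=1}^{N}a_{\ep,i,j}\frac{\partial U_{\ep,y_{j}}}{\partial x_{i}}\ \text{ in }\R^{N}.
\]
Once we produce $y_{\ep}$ with $a_{\ep,i,j}(y_{\ep})=0$ for all $i,j$, the bounds on $\var_{\ep}=\var_{\ep,y_{\ep}}$ are exactly \eqref{estimate of small term} and \eqref{*estimate} (with $|\nabla V(y_{\ep,j})|=O(1)$), and $u_{\ep}>0$ follows from a maximum principle: near $y_{\ep,j}$ the term $U_{\ep,y_{\ep,j}}$ dominates $|\var_{\ep}|\le\|\var_{\ep}\|_{*}\sum_{t}e^{-|x-y_{t}|^{2}/(2\ep^{2})}$, so $u_{\ep}>0$ there, whereas on $\{u_{\ep}<0\}$ one has $\ep^{2}\Delta u_{\ep}^{-}=u_{\ep}^{-}\big(V-\log(u_{\ep}^{-})^{2}\big)\ge0$ because $u_{\ep}^{-}$ is small there, which forces $u_{\ep}^{-}\equiv0$.

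To find $y_{\ep}$ I would use local Pohozaev identities, following \cite{Peng-2018,Cao-Li-Luo-2015}. Fix a small $d>0$, independent of $\ep$, with $\overline{B_{2d}(\xi_{m})}\cap\overline{B_{2d}(\xi_{j})}=\emptyset$ for $m\ne j$ and $\delta\ll d$. Multiplying the equation above by $\partial_{x_{l}}u_{\ep}$ and integrating over $B_{d}(y_{m})$, the standard manipulations (using that $t\log t^{2}$ has primitive $\tfrac{1}{2}t^{2}\log t^{2}-\tfrac{1}{2}t^{2}$) give
\[
-\frac{1}{2}\int_{B_{d}(y_{m})}\frac{\partial V}{\partial x_{l}}\,u_{\ep}^{2}
=\sum_{i,j}a_{\ep,i,j}\int_{B_{d}(y_{m})}\frac{\partial U_{\ep,y_{j}}}{\partial x_{i}}\frac{\partial u_{\ep}}{\partial x_{l}}
+\mathcal{B}_{\ep,l,m},
\]
where $\mathcal{B}_{\ep,l,m}$ is a sum of boundary integrals over $\partial B_{d}(y_{m})$ of $\ep^{2}\big(\partial_{\nu}u_{\ep}\,\partial_{x_{l}}u_{\ep}-\tfrac{1}{2}|\nabla u_{\ep}|^{2}\nu_{l}\big)$ and of $\big(\tfrac{1}{2}Vu_{\ep}^{2}-\tfrac{1}{2}u_{\ep}^{2}\log u_{\ep}^{2}+\tfrac{1}{2}u_{\ep}^{2}\big)\nu_{l}$. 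On $\partial B_{d}(y_{m})$ one has $|x-y_{j}|\ge d/2$ for every $j$, hence $\sum_{j}U_{\ep,y_{j}}(x)\le Ce^{-c/\ep^{2}}$ and, by \eqref{*estimate}, $|\var_{\ep}(x)|\le Ce^{-c/\ep^{2}}$; applying interior elliptic estimates on $\ep$-balls (where $u_{\ep}$ solves $-\ep^{2}\Delta u_{\ep}+Vu_{\ep}=u_{\ep}\log u_{\ep}^{2}+\sum a_{\ep,i,j}\partial_{i}U_{\ep,y_{j}}$ with an exponentially small right-hand side) upgrades this to $|\nabla u_{\ep}(x)|\le Ce^{-c/\ep^{2}}$, and $u_{\ep}^{2}|\log u_{\ep}^{2}|\le C\ep^{-2}e^{-c/\ep^{2}}$. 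Therefore $\mathcal{B}_{\ep,l,m}=O(e^{-c/\ep^{2}})$.

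It remains to expand the two volume integrals. On the right-hand side the terms with $j\ne m$ are $O(e^{-c/\ep^{2}})$ since $\partial_{i}U_{\ep,y_{j}}$ is exponentially small on $B_{d}(y_{m})$; for $j=m$ one writes $u_{\ep}=U_{\ep,y_{m}}+(\text{exp.\ small})+\var_{\ep}$ and uses $\int_{\R^{N}}\partial_{x_{i}}U_{\ep,y_{m}}\,\partial_{x_{l}}U_{\ep,y_{m}}=\delta_{il}\,c_{2}(V(y_{m}))\ep^{N-2}$, $c_{2}>0$, together with $\int_{B_{d}(y_{m})}\partial_{x_{i}}U_{\ep,y_{m}}\,\partial_{x_{l}}\var_{\ep}=o(\ep^{N-2})$ (integrate by parts, bound $|\partial^{2}_{x_{l}x_{i}}U_{\ep,y_{m}}|\le C\ep^{-2}(1+|x-y_{m}|^{2}/\ep^{2})U_{\ep,y_{m}}$ and use \eqref{*estimate}); since $|a_{\ep,i,j}|=O(\ep^{2})$ (cf.\ the proof of Proposition \ref{contract lemma}), the right-hand side equals $c_{2}(V(y_{m}))\ep^{N-2}a_{\ep,l,m}+o(\ep^{N})$. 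On the left-hand side, $u_{\ep}^{2}=U_{\ep,y_{m}}^{2}+2U_{\ep,y_{m}}\var_{\ep}+\var_{\ep}^{2}+(\text{exp.\ small})$ on $B_{d}(y_{m})$; Taylor-expanding $\partial_{x_{l}}V$ about $y_{m}$ and using the radial symmetry of $U_{\ep,y_{m}}^{2}$ gives $\int_{B_{d}(y_{m})}\partial_{x_{l}}V\,U_{\ep,y_{m}}^{2}=c_{3}(V(y_{m}))\ep^{N}\partial_{x_{l}}V(y_{m})+O(\ep^{N+2})$, $c_{3}>0$, while $|\int\partial_{x_{l}}V\,U_{\ep,y_{m}}\var_{\ep}|\le C\|U_{\ep,y_{m}}\|_{2}\|\var_{\ep}\|_{2}=O(\ep^{N+1})$ and $|\int\partial_{x_{l}}V\,\var_{\ep}^{2}|\le C\|\var_{\ep}\|_{\ep}^{2}=O(\ep^{N+2})$ by \eqref{estimate of small term}. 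Combining,
\[
a_{\ep,l,m}(y)=-b\big(V(y_{m})\big)\,\ep^{2}\,\frac{\partial V}{\partial x_{l}}(y_{m})+o(\ep^{2}),\qquad 1\le l\le N,\ 1\le m\le k,
\]
where $b=c_{3}/(2c_{2})>0$ is continuous (in fact $b\equiv1$), and the $o(\ep^{2})$ is uniform for $y\in\prod_{j}B_{\delta}(\xi_{j})$.

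Finally, consider the continuous map $\Phi_{\ep}(y)=\ep^{-2}\big(a_{\ep,l,m}(y)\big)_{l,m}:\prod_{j}B_{\delta}(\xi_{j})\to\R^{kN}$. By the last display it is, on the boundary of $\prod_{j}B_{\delta}(\xi_{j})$, an $o(1)$-perturbation of $\Psi(y):=\big(-b(V(y_{m}))\nabla V(y_{m})\big)_{m}$. By $(V_{2})$, after shrinking $\delta$ we may assume $\nabla V$ has no zero in $\overline{B_{\delta}(\xi_{m})}\setminus\{\xi_{m}\}$, so $|\Psi(y)|\ge c_{\delta}>0$ on that boundary, and homotopy invariance of the Brouwer degree gives
\[
\deg\Big(\Phi_{\ep},\prod_{j}B_{\delta}(\xi_{j}),0\Big)=\deg\Big(\Psi,\prod_{j}B_{\delta}(\xi_{j}),0\Big)=\prod_{j=1}^{k}(-1)^{N}\operatorname{sgn}\det\big(\nabla^{2}V(\xi_{j})\big)\ne0.
\]
Hence $\Phi_{\ep}$ — and therefore $(a_{\ep,i,j})$ — has a zero $y_{\ep}\in\prod_{j}B_{\delta}(\xi_{j})$, completing the proof. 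The crux is the second and third steps: establishing the exponential decay of $u_{\ep}$ \emph{and its gradient} away from the concentration points — precisely where the new norm \eqref{new norm}, via \eqref{*estimate}, is indispensable — so that $\mathcal{B}_{\ep,l,m}$ is negligible, and isolating the leading term $c_{3}\ep^{N}\nabla V(y_{m})$ in $\int_{B_{d}(y_{m})}\partial_{x_{l}}V\,u_{\ep}^{2}$, which relies on the improved bound $\|\var_{\ep}\|_{\ep}=O(\ep^{N/2+1})$ rather than merely $O(\ep^{N/2})$.
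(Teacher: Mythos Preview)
Your argument is correct, but it follows a genuinely different route from the paper's own proof. The paper does not use Pohozaev identities for the existence step: instead it tests the residual $-\ep^{2}\Delta u_{\ep}+V(x)u_{\ep}-u_{\ep}\log u_{\ep}^{2}$ directly against the kernel directions $\partial_{x_{i}}U_{\ep,y_{j}}$ over all of $\R^{N}$. Because $\var_{\ep}\in E_{\ep,y}$, the terms $\langle\var_{\ep},\partial_{x_{i}}U_{\ep,y_{j}}\rangle_{\ep}$ vanish identically, and after expanding the logarithmic nonlinearity the paper obtains
\[
-\tfrac{1}{2}\ep^{N}\partial_{x_{i}}V(y_{j})\int (U^{j})^{2}+O\Big(\frac{\ep^{N}}{|\ln\ep|^{1-\theta}}\Big)=0,
\]
so that the reduced equation is $\partial_{x_{i}}V(y_{j})=O(|\ln\ep|^{-(1-\theta)})$, which is solved near $\xi_{j}$ by the nondegeneracy in $(V_{2})$. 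In particular no boundary terms arise and no gradient decay of $u_{\ep}$ is needed at this stage; the Pohozaev identity \eqref{eq: Pohozaev} together with the decay Lemma \ref{estimate about u} are reserved for Section \ref{Local uniqueness results}, where they are used to sharpen $|y_{\ep,j}-\xi_{j}|=o(\ep)$ and to prove uniqueness.

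Your approach buys robustness: the Pohozaev computation and Brouwer degree argument make the solvability of the finite-dimensional problem completely transparent and insensitive to the precise form of the ansatz. The price is that you must establish exponential decay of $u_{\ep}$ \emph{and} $\nabla u_{\ep}$ on $\partial B_{d}(y_{m})$ (your $\mathcal{B}_{\ep,l,m}=O(e^{-c/\ep^{2}})$), which you correctly extract from \eqref{*estimate} plus interior elliptic estimates; the paper's direct testing avoids this entirely by working on $\R^{N}$ and exploiting the built-in orthogonality of $\var_{\ep}$ to $K_{\ep}$. Both routes reduce to $\nabla V(y_{j})=o(1)$ and finish via $(V_{2})$.
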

First, Proposition \ref{prop: reduction map} implies the existence of $\var_{\ep}\in E_{\ep,y}$ , such that
\begin{equation}\label{new form}
  L_{\ep}\var_{\ep}-l_{\ep}-R_{\ep}(\var_{\ep})=\sum_{j=1}^{k}\sum_{i=1}^{N}a_{\ep,i,j}\frac{\partial U_{\ep,y_{j}}}{\partial x_{i}},
\end{equation}
for some constants $a_{\ep,i,j}$.
So we need to choose $y_{j}$ suitably such that
$a_{\ep,i,j}=0,\ i=1,\cdots,N,\ j=1,\cdots,k$.

The function in the right hand side of (\ref{new form}) belongs to
\[
E^{\bot}_{\ep,y}=span~\big\{\frac{\partial U_{\ep,y_{j}}}{\partial x_{i}},\ i=1,\cdots,N,\ j=1,\cdots,k\big\}.
\]
Therefore, we want to prove the left hand side of (\ref{new form}) belongs to $E_{\ep,y}$ , then the
function in the right hand side of (\ref{new form}) must be zero.

We first use the notation that
\[
u_{\varepsilon}=\sum_{j=1}^{k}U_{\ep,y_{j}}+\var_{\ep},
\]
Then, for any $\eta\in H_{\ep}$,
\[
\begin{aligned}
\langle L_{\ep}\var_{\ep}-l_{\ep}-R_{\ep}(\var_{\ep}),\eta\rangle=&\langle -\ep^{2}\Delta u_{\ep}+V(x)u_{\ep}-u_{\ep}\log u^{2}_{\ep},\eta\rangle\\=&\int(\ep^{2}\nabla u_{\ep}\nabla\eta+V(x)u_{\ep}\eta-u_{\ep}\eta\log u^{2}_{\ep}).
\end{aligned}
\]
\begin{lemma}
  Suppose that $y_{\ep,j}$ with $j=1,\cdots,k$ satisfies
  \begin{equation}\label{algebraic equation}
  \begin{aligned}
    &\int\Big(\ep^{2}\nabla u_{\ep}\nabla\frac{\partial U_{\ep,y_{\ep,j}}}{\partial x_{i}}+V(x)u_{\ep}\frac{\partial U_{\ep,y_{\ep,j}}}{\partial x_{i}}-u_{\ep}\frac{\partial U_{\ep,y_{\ep,j}}}{\partial x_{i}}\log u^{2}_{\ep}\Big)=0,\ \ i=1,\cdots,N.
    \end{aligned}
  \end{equation}
Then $a_{\ep,i,j}=0,\ i=1,\cdots,N,\ \ j=1,\cdots,k$.
\end{lemma}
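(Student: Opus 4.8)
The plan is to convert the hypothesis (\ref{algebraic equation}) into a homogeneous linear system for the $Nk$ unknowns $a_{\ep,i,j}$, and then to show that its coefficient matrix is invertible for $\ep$ small, which forces all $a_{\ep,i,j}$ to vanish.

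First I would record the purely algebraic identity used just above: for every $\eta\in H_{\ep}$,
\[
\langle L_{\ep}\var_{\ep}-l_{\ep}-R_{\ep}(\var_{\ep}),\eta\rangle=\int\Big(\ep^{2}\nabla u_{\ep}\cdot\nabla\eta+V(x)u_{\ep}\eta-u_{\ep}\eta\log u^{2}_{\ep}\Big),
\]
which follows from (\ref{linear part}), (\ref{zero order}), (\ref{nonlinear term}) and $u_{\ep}=\sum_{j=1}^{k}U_{\ep,y_{\ep,j}}+\var_{\ep}$. Then I would pair (\ref{new form}) with $\eta=\frac{\partial U_{\ep,y_{\ep,l}}}{\partial x_{m}}$ for each $m\in\{1,\dots,N\}$ and $l\in\{1,\dots,k\}$: by the identity above the left-hand side equals exactly the expression in (\ref{algebraic equation}) with $(i,j)=(m,l)$, hence is $0$ by assumption, while the right-hand side gives
\[
\sum_{j=1}^{k}\sum_{i=1}^{N}a_{\ep,i,j}\Big\langle\frac{\partial U_{\ep,y_{\ep,j}}}{\partial x_{i}},\frac{\partial U_{\ep,y_{\ep,l}}}{\partial x_{m}}\Big\rangle=0 .
\]
Thus $\mathcal{A}_{\ep}\mathbf{a}_{\ep}=0$, where $\mathbf{a}_{\ep}=(a_{\ep,i,j})\in\R^{Nk}$ and $\mathcal{A}_{\ep}$ is the Gram matrix of $\{\partial_{x_{i}}U_{\ep,y_{\ep,j}}\}_{i,j}$.

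It then suffices to show $\mathcal{A}_{\ep}$ is invertible for $\ep$ small. Using $U_{\ep,y_{\ep,j}}(x)=e^{(V(y_{\ep,j})+N)/2}e^{-|x-y_{\ep,j}|^{2}/(2\ep^{2})}$, whence $\partial_{x_{i}}U_{\ep,y_{\ep,j}}=-\ep^{-2}(x_{i}-y_{\ep,j,i})U_{\ep,y_{\ep,j}}$, a direct Gaussian computation gives for the diagonal blocks
\[
\Big\langle\frac{\partial U_{\ep,y_{\ep,j}}}{\partial x_{i}},\frac{\partial U_{\ep,y_{\ep,j}}}{\partial x_{m}}\Big\rangle=\tfrac{1}{2}\pi^{N/2}\,e^{V(y_{\ep,j})+N}\,\ep^{N-2}\,\delta_{im},
\]
while for $j\neq l$, completing the square in $|x-y_{\ep,j}|^{2}+|x-y_{\ep,l}|^{2}$ and using the uniform separation $|y_{\ep,j}-y_{\ep,l}|\ge c_{0}>0$ (valid because the balls $B_{\delta}(\xi_{j})$ are pairwise disjoint) shows the off-diagonal blocks are exponentially small, of order $\ep^{N-4}e^{-c_{0}^{2}/(4\ep^{2})}$. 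Since $V$ is bounded below by $(V_{1})$, the matrix $\ep^{-(N-2)}\mathcal{A}_{\ep}$ is block-diagonal up to an exponentially small error, with uniformly invertible diagonal blocks $\tfrac{1}{2}\pi^{N/2}e^{V(y_{\ep,j})+N}I_{N}$; hence $\mathcal{A}_{\ep}$ is invertible for all small $\ep$, so $\mathbf{a}_{\ep}=0$, i.e.\ $a_{\ep,i,j}=0$ for every $i$ and $j$.

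The algebraic identity and the Gaussian integrals are routine; the one point I would treat carefully — and which I see as the heart of the argument — is the uniform non-degeneracy of the leading Gram matrix, which rests on the orthogonality $\int\partial_{x_{i}}U\,\partial_{x_{m}}U\propto\delta_{im}$ coming from the radial Gaussian profile together with the uniform lower bound on $|y_{\ep,j}-y_{\ep,l}|$ that renders all cross-bubble interactions negligible.
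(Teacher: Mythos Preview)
Your proof is correct and follows exactly the paper's approach: pair (\ref{new form}) with the kernel functions $\partial_{x_{i}}U_{\ep,y_{\ep,j}}$, use the identity displayed just before the lemma to recognize the left-hand side as the expression in (\ref{algebraic equation}), and conclude from the invertibility of the resulting Gram matrix. The paper's own proof is extremely terse --- it simply writes down the linear system and asserts that it implies $a_{\ep,i,j}=0$ --- whereas you supply the explicit Gaussian computation showing the Gram matrix is block-diagonal with uniformly invertible blocks up to exponentially small cross-terms, which is exactly the justification the paper omits.
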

 \begin{proof}
   If (\ref{algebraic equation}) holds, then
   \[
   \sum_{s=1}^{k}\sum_{m=1}^{N}a_{\ep,m,s}\left\langle\frac{\partial U_{\ep,y_{\ep,s}}}{\partial x_{m}},\frac{\partial U_{\ep,y_{\ep,j}}}{\partial x_{i}} \right\rangle=0,\ i=1,\cdots,N,\ \ j=1,\cdots,k.
   \]
Which implies that $a_{\ep,i,j}=0,\ i=1,\cdots,N,\ \ j=1,\cdots,k$.
 \end{proof}

\begin{proof}[Proof of Theorem \ref{direct result}]
  We only need to solve the algebraic equations (\ref{algebraic equation}).
 The main task is to find the
main term for the function in the left hand side of (\ref{algebraic equation}). The
procedure is that we first estimate the left hand side of (\ref{algebraic equation}) with
$\var_{\ep}=0$ . Then we show that the contribution of the error term $\var_{\ep}$ to the
function in the left hand side of (\ref{algebraic equation}) is negligible.

Denote $
G_{\ep,y}:=\displaystyle\sum_{j=1}^{k}U_{\ep,y_{j}}$.
  From (\ref{eq: our main part}) and the symmetry of $U_{\ep,y_{j}}$ we get
  \begin{equation*}\label{first estimate}
  \begin{aligned}
    \int \ep^{2}\nabla G_{\ep,y}\nabla\frac{\partial U_{\ep,y_{j}}}{\partial x_{i}}
    =&-\int\sum_{s=1}^{k} V(y_{s})U_{\ep,y_{s}}\frac{\partial U_{\ep,y_{j}}}{\partial x_{i}}+\int\sum_{s=1}^{k} U_{\ep,y_{s}}\frac{\partial U_{\ep,y_{j}}}{\partial x_{i}}\log U^{2}_{\ep,y_{s}}\\
    =&-\int\sum_{s\neq j} V(y_{s})U_{\ep,y_{s}}\frac{\partial U_{\ep,y_{j}}}{\partial x_{i}}
       +\int\sum_{s\neq j} U_{\ep,y_{s}}\frac{\partial U_{\ep,y_{j}}}{\partial x_{i}}\log U^{2}_{\ep,y_{s}}\\
    =&O(e^{-\frac{c}{\ep^{2}}}),
    \end{aligned}
  \end{equation*}
  and
\[
\begin{aligned}
\int V(x)G_{\ep,y}\frac{\partial U_{\ep,y_{j}}}{\partial x_{i}}
=&\int (V(x)-V(y_{j}))U_{\ep,y_{j}}\frac{\partial U_{\ep,y_{j}}}{\partial x_{i}}+O(e^{-\frac{c}{\ep^{2}}})\\
=&\frac{1}{2} \int (\frac{\partial V(y_{j})}{\partial x_{i}}(x_{i}-y_{j,i})+O(|x-y_{j}|^{2}))\frac{\partial U^{2}_{\ep,y_{j}}}{\partial x_{i}}+O(e^{-\frac{c}{\ep^{2}}})\\
=&\frac{1}{2}\ep^{N}  \frac{\partial V(y_{j})}{\partial x_{i}}\int x_{i}\frac{\partial U^{2}_{\ep,y_{j}}(\ep x+y_{j})}{\partial x_{i}}+O(\ep^{N+1})\\
=&-\frac{1}{2}\ep^{N}\frac{\partial V(y_{j})}{\partial x_{i}}\int U^{2}_{\ep,y_{j}}(\ep x+y_{j})
       +O(\ep^{N+1}),
\end{aligned}
\]
for some $c>0$.
Moreover, similar to (\ref{example}), we have
\[
\int G_{\ep,y}\frac{\partial U_{\ep,y_{j}}}{\partial x_{i}}\log G^{2}_{\ep,y}
=2\int \sum_{s=1}^{k}U_{\ep,y_{s}}\frac{\partial U_{\ep,y_{j}}}{\partial x_{i}}
  \left(\log G_{\ep,y}-\log U_{\ep,y_{s}}\right)+O(e^{-\frac{c}{\ep^{2}}})=O(e^{-\frac{c}{\ep^{2}}}).
\]
From above, we obtain
  \begin{equation*}\label{third estimate}
  \begin{aligned}
\int &\Big(\ep^{2}\nabla G_{\ep,y}\nabla\frac{\partial U_{\ep,y_{j}}}{\partial x_{i}}
  +V(x)G_{\ep,y}\frac{\partial U_{\ep,y_{j}}}{\partial x_{i}}
  -G_{\ep,y}\frac{\partial U_{\ep,y_{j}}}{\partial x_{i}}\log G^{2}_{\ep,y}\Big)\\
=&-\frac{1}{2}\ep^{N}\frac{\partial V(y_{j})}{\partial x_{i}}\int U^{2}_{\ep,y_{j}}(\ep x+y_{j})
       +O(\ep^{N+1}).
    \end{aligned}
  \end{equation*}

Now we show that the contribution of the error term $\var_{\ep}$ to the
function in the left hand side of (\ref{algebraic equation}) is negligible.

As  $\var_{\ep}\in E_{\ep,y}$, for $i=1,\cdots,N,\ j=1,\cdots,k$, we have
\[
\begin{aligned}
\int&\Big(\ep^{2}\nabla (G_{\ep,y}+\var_{\ep})\nabla\frac{\partial U_{\ep,y_{j}}}{\partial x_{i}}
  +V(x)(G_{\ep,y}+\var_{\ep})\frac{\partial U_{\ep,y_{j}}}{\partial x_{i}}\Big)\\
 =&\int\Big(\ep^{2}\nabla G_{\ep,y}\nabla\frac{\partial U_{\ep,y_{j}}}{\partial x_{i}}
  +V(x)G_{\ep,y}\frac{\partial U_{\ep,y_{j}}}{\partial x_{i}}\Big).
  \end{aligned}
\]
On the other hand,
\[
\begin{aligned}
2&\int\Big (G_{\ep,y}+\var_{\ep}\Big)\frac{\partial U_{\ep,y_{j}}}{\partial x_{i}}
 \log\Big(G_{\ep,y}+\var_{\ep}\Big)\\
 =& 2\int G_{\ep,y}\frac{\partial U_{\ep,y_{j}}}{\partial x_{i}}\log G_{\ep,y}
  +2\int(\log G_{\ep,y}+1)\frac{\partial U_{\ep,y_{j}}}{\partial x_{i}}\var_{\ep}
  +O\Big(\int \frac{\var_{\ep}}{G_{\ep,y}+\theta\var_{\ep}}
          \frac{\partial U_{\ep,y_{j}}}{\partial x_{i}}\var_{\ep}\Big).
  \end{aligned}
\]
First from (\ref{eq: our main part}) and $\var_{\ep}\in E_{\ep,y}$, we have
\[
\begin{aligned}
 2\int&(\log U_{\ep,y_{j}}+1)\frac{\partial U_{\ep,y_{j}}}{\partial x_{i}}\var_{\ep}
\\ =&\int\Big(\ep^{2}\nabla\frac{\partial U_{\ep,y_{j}}}{\partial x_{i}}\nabla\var_{\ep}+V(y_{j})\frac{\partial U_{\ep,y_{j}}}{\partial x_{i}}\var_{\ep}\Big)=\int(V(y_{j})-V(x))\frac{\partial U_{\ep,y_{j}}}{\partial x_{i}}\var_{\ep}\\
 =&O(|\nabla V(y_{j})|\ep^{\frac{N}{2}}+\ep^{\frac{N}{2}+1})\|\var_{\ep}\|_{\ep}=O(|\nabla V(y_{j})|\ep^{N+1}+\ep^{N+2}).
  \end{aligned}
\]
So, similar to (\ref{example}), we get
\[
\begin{aligned}
2\int&(\log G_{\ep,y}+1)\frac{\partial U_{\ep,y_{j}}}{\partial x_{i}}\var_{\ep}\\
 =& 2\int[(\log G_{\ep,y}+1)-(\log U_{\ep,y_{j}}+1)]\frac{\partial U_{\ep,y_{j}}}{\partial x_{i}}\var_{\ep}
   +2\int(\log U_{\ep,y_{j}}+1)\frac{\partial U_{\ep,y_{j}}}{\partial x_{i}}\var_{\ep}\\
 =&\int(\log G_{\ep,y}-\log U_{\ep,y_{j}})\frac{\partial U_{\ep,y_{j}}}{\partial x_{i}}\var_{\ep}
    +2\int(\log U_{\ep,y_{j}}+1)\frac{\partial U_{\ep,y_{j}}}{\partial x_{i}}\var_{\ep}\\
 =&O(e^{-\frac{c}{\ep^{2}}})+O(|\nabla V(y_{j})|\ep^{N+1}+\ep^{N+2})=O(|\nabla V(y_{j})|\ep^{N+1}+\ep^{N+2}).
  \end{aligned}
\]For the other term, we have
\[
\begin{aligned}
\int \frac{\var_{\ep}}{G_{\ep,y}+\theta\var_{\ep}}
          \frac{\partial U_{\ep,y_{j}}}{\partial x_{i}}\var_{\ep}
&=O\Big(\int G^{-1}_{\ep,y}\var_{\ep}\frac{\partial U_{\ep,y_{j}}}{\partial x_{i}}\var_{\ep}\Big)=O\Big(\|\var_{\ep}\|_{*}\int \frac{\partial U_{\ep,y_{j}}}{\partial x_{i}}\var_{\ep}\Big)\\&
=O\Big(\|\var_{\ep}\|_{*}|\nabla U_{\ep,y_{j}}|_{2}\|\var_{\ep}\|_{\ep}\Big) =O\Big(\|\var_{\ep}\|_{*}\ep^{\frac{N}{2}-1}\ep^{\frac{N}{2}+1} \Big)\\
& =O\Big(\frac{\ep^{N}}{|\ln\ep|^{1-\theta}}\Big).
  \end{aligned}
\]
So we get
\[
\begin{aligned}
 \int&\Big(\ep^{2}\nabla u_{\ep}\nabla\frac{\partial U_{\ep,y_{j}}}{\partial x_{i}}
  +V(x)u_{\ep}\frac{\partial U_{\ep,y_{j}}}{\partial x_{i}}-
 u_{\ep}\frac{\partial U_{\ep,y_{j}}}{\partial x_{i}}
 \log u_{\ep}\Big)\\
 =&\int\Big(\ep^{2}\nabla G_{\ep,y}\nabla\frac{\partial U_{\ep,y_{j}}}{\partial x_{i}}
  +V(x)G_{\ep,y}\frac{\partial U_{\ep,y_{j}}}{\partial x_{i}}-2\int G_{\ep,y}\frac{\partial U_{\ep,y_{j}}}{\partial x_{i}}\log G_{\ep,y}\Big)
  +O\Big(\frac{\ep^{N}}{|\ln\ep|^{1-\theta}}\Big)\\
 =&-\frac{1}{2}\ep^{N}\frac{\partial V(y_{j})}{\partial x_{i}}\int U^{2}_{\ep,y_{j}}(\ep x+y_{j})
 +O\Big(\frac{\ep^{N}}{|\ln\ep|^{1-\theta}}\Big),\ \ \  i=1,\cdots,N.
\end{aligned}
\]
As a result,  (\ref{algebraic equation}) is equivalent to
\begin{equation}\label{critical term}
  \frac{\partial V(y_{j})}{\partial x_{i}}=O\Big(\frac{1}{|\ln\ep|^{1-\theta}}\Big),\ \ \  i=1,\cdots,N.
\end{equation}
By \eqref{critical term} and the assumption $(V_{2})$, we have
\[
\frac{\partial^{2}V(\xi_{j})}{\partial\xi_{j,i}\partial\xi_{j,l}}(y_{j}-\xi_{j})+o(|y_{j}-\xi_{j}|)
=O\Big(\frac{1}{|\ln\ep|^{1-\theta}}\Big),\ \ \  i,l=1,\cdots,N.
\]
Then  (\ref{critical term}) has a solution
$y_{\ep,j}\in B_{\delta}(\xi_{j})$. We complete the proof.
\end{proof}

\section{Local uniqueness results}\label{Local uniqueness results}
In this section, we prove the local uniqueness result  Theorem \ref{thm: uniqueness}.
First, we give an important estimate on $|y_{\ep,j}-\xi_{j}|$, which can be improved  by using a class of Pohozaev type identities.
And the crucial Pohozaev type identities we will use are as follows:
\begin{proposition} \label{prop: Pohozaev identity}Let $u$ be a
 positive solution of Eq. \eqref{eq: Kirchhoff}. Let $\Om$ be a bounded
 smooth domain in $\R^{N}$. Then, for each $i=1,\cdots,N$, there hold
 \begin{equation}
 \begin{aligned}\int_{\Om}\frac{\pa V(x)}{\pa x_{i}}u^{2} & =\int_{\pa\Om}\Big[ \ep^{2}\Big(|\na u|^{2}\nu_{i}-2\frac{\pa u}{\pa\nu}\frac{\pa u}{\pa x_{i}}\Big)  + (V(x)+1)u^{2}\nu_{i}\Big]-\int_{\pa\Om}\nu_{i}u^{2}\log u^{2},
 \end{aligned}
\label{eq: Pohozaev}
\end{equation}
 where $\nu=(\nu_{1},\cdots,\nu_{N})$ is the unit outward normal of $\pa\Om$.
\end{proposition}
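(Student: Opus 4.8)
The plan is to prove \eqref{eq: Pohozaev} as a \emph{local} Pohozaev identity: multiply \eqref{eq: Kirchhoff} by $\dfrac{\pa u}{\pa x_{i}}$, integrate over $\Om$, and integrate by parts, keeping all boundary contributions on $\pa\Om$. No decay at infinity is needed, since every integral is taken over the bounded set $\Om$. Before carrying this out I record the regularity used to justify the manipulations: a positive solution $u$ of \eqref{eq: Kirchhoff} is locally bounded (the nonlinearity $s\mapsto s\log s^{2}$ grows more slowly than any power of $s$, so a Brezis--Kato/Moser iteration applies), and since $s\mapsto s\log s^{2}$ extends continuously to $s=0$ and is H\"older continuous on bounded sets, $L^{p}$ and Schauder estimates give $u\in C^{2,\al}_{\loc}(\R^{N})$; in particular $u\in C^{2}(\overline{\Om})$, and the boundary term $\int_{\pa\Om}\nu_{i}u^{2}\log u^{2}$ is finite because $t\mapsto t\log t$ is continuous at $0$.

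Now multiply $-\ep^{2}\De u+V(x)u=u\log u^{2}$ by $\dfrac{\pa u}{\pa x_{i}}$ and integrate over $\Om$. For the diffusion term, integrating by parts twice and using $\displaystyle\int_{\Om}\na u\cdot\na\Big(\frac{\pa u}{\pa x_{i}}\Big)=\frac12\int_{\Om}\frac{\pa}{\pa x_{i}}|\na u|^{2}=\frac12\int_{\pa\Om}|\na u|^{2}\nu_{i}$ yields
\[
-\ep^{2}\int_{\Om}\De u\,\frac{\pa u}{\pa x_{i}}=-\ep^{2}\int_{\pa\Om}\frac{\pa u}{\pa\nu}\frac{\pa u}{\pa x_{i}}+\frac{\ep^{2}}{2}\int_{\pa\Om}|\na u|^{2}\nu_{i}.
\]
For the potential term, writing $V(x)u\,\partial_{i}u=\tfrac12 V(x)\,\partial_{i}(u^{2})$ and integrating by parts gives $\displaystyle\int_{\Om}V(x)u\,\frac{\pa u}{\pa x_{i}}=\frac12\int_{\pa\Om}V(x)u^{2}\nu_{i}-\frac12\int_{\Om}\frac{\pa V(x)}{\pa x_{i}}u^{2}$. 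For the right-hand side, note that $u\,\partial_{i}u\,\log u^{2}=\partial_{i}\big(F(u)\big)$ with $F(t)=\tfrac12 t^{2}\log t^{2}-\tfrac12 t^{2}$ (indeed $F'(t)=2t\log t=t\log t^{2}$), whence
\[
\int_{\Om}u\,\frac{\pa u}{\pa x_{i}}\log u^{2}=\int_{\pa\Om}F(u)\,\nu_{i}=\frac12\int_{\pa\Om}u^{2}\log u^{2}\,\nu_{i}-\frac12\int_{\pa\Om}u^{2}\nu_{i}.
\]

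Equating the sum of the two left-hand contributions to the right-hand one, the only interior term that survives is $-\tfrac12\int_{\Om}\partial_{i}V\,u^{2}$; solving for it, multiplying by $-2$, and collecting the boundary integrals --- the diffusion pieces assemble into $\ep^{2}\big(|\na u|^{2}\nu_{i}-2\frac{\pa u}{\pa\nu}\frac{\pa u}{\pa x_{i}}\big)$, while the $u^{2}\nu_{i}$ term produced by the logarithm merges with the $V(x)u^{2}\nu_{i}$ term into $(V(x)+1)u^{2}\nu_{i}$ --- gives exactly \eqref{eq: Pohozaev}. The computation itself is routine; the only point deserving care is the a priori regularity of $u$ and the convergence of the logarithmic boundary integral, which is why I isolate it at the start, and it follows from standard elliptic estimates together with the continuity of $t\mapsto t\log t$ at the origin.
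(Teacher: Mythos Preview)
Your proof is correct and follows exactly the approach the paper indicates: multiply \eqref{eq: Kirchhoff} by $\partial u/\partial x_{i}$, integrate over $\Om$, and integrate by parts. The paper gives only this one-line hint, so your added detail on regularity and on the primitive $F(t)=\tfrac12 t^{2}\log t^{2}-\tfrac12 t^{2}$ is a useful expansion rather than a departure.
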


 Proposition \ref{prop: Pohozaev identity} can be directly proved by multiplying both sides of Eq. \eqref{eq: Kirchhoff} by $\frac{\partial u}{\partial x_{i}}$ and then integrating by parts.  Next, similar to Proposition 2.2 in \cite{Luo-Peng-Wang}, we find
\begin{lemma}\label{estimate about u}
  If $\var_{\ep}$ in Theorem \ref{direct result} satisfies $\|\var_{\ep}\|_{\ep}=o(\ep^{\frac{N}{2}})$, then there exists a small constant $\tau>0$, such that
  \begin{equation*}\label{var differential}
   |\var_{\ep}(x)|+|\nabla \var_{\ep}(x)|=O\big(e^{-\frac{\tau}{\ep}}\big), ~ \mbox{for}~x\in\R^{N}\setminus \displaystyle\bigcup^k_{j=1} B_{\tau}(y_{\ep,j}).
  \end{equation*}

\end{lemma}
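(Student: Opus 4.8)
The plan is to estimate $\var_\ep$ and $\nabla\var_\ep$ separately on $\R^N\setminus\bigcup_{j=1}^k B_\tau(y_{\ep,j})$, with $\tau>0$ a fixed small constant: the decay of $\var_\ep$ itself can be read off almost immediately from the $*$-norm, while for the gradient I would use the equation together with interior elliptic estimates. First, since $\var_\ep$ is the function produced by Theorem \ref{direct result}, it satisfies $\|\var_\ep\|_*\le|\ln\ep|^{-(1-\theta)}$; hence, by the definition \eqref{new norm} and since $|x-y_{\ep,j}|\ge\tau$ for every $j$ on the set in question,
\[
|\var_\ep(x)|\ \le\ \|\var_\ep\|_*\sum_{j=1}^k e^{-\frac{|x-y_{\ep,j}|^2}{2\ep^2}}\ \le\ \frac{k}{|\ln\ep|^{1-\theta}}\,e^{-\frac{\tau^2}{2\ep^2}},\qquad x\in\R^N\setminus\bigcup_{j=1}^k B_\tau(y_{\ep,j}),
\]
which for $\ep$ small is far better than the claimed $O(e^{-\tau/\ep})$. (If instead one wanted to use only the hypothesis $\|\var_\ep\|_\ep=o(\ep^{N/2})$, the same bound would follow from a comparison argument: on $\R^N\setminus\bigcup_j B_{\ep R}(y_{\ep,j})$ the zeroth-order coefficient $V(x)-2-2\log(\sum_j U_{\ep,y_{\ep,j}})$ of $L_\ep$ is $\ge\tfrac12\inf_{\R^N}V>0$, cf.\ the proof of Proposition \ref{inversibility}, while $l_\ep+R_\ep(\var_\ep)$ is exponentially small there, so a constant barrier $Me^{-\tau/\ep}$ dominates $|\var_\ep|$ by the maximum principle.)

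Next I would write down the equation governing $\nabla\var_\ep$. Since the constants $a_{\ep,i,j}$ in \eqref{new form} vanish for the chosen $y_{\ep,j}$, the function $\var_\ep$ solves $L_\ep\var_\ep=l_\ep+R_\ep(\var_\ep)$; recalling \eqref{linear part} and moving the zeroth-order terms to the right, this reads $-\ep^2\Delta\var_\ep=g_\ep$ with
\[
g_\ep:=\Big(2\log\big(\textstyle\sum_{j=1}^k U_{\ep,y_{\ep,j}}\big)+2-V(x)\Big)\var_\ep+l_\ep+R_\ep(\var_\ep).
\]
From the explicit form $U_{\ep,y_{\ep,j}}(x)=e^{(V(y_{\ep,j})+N)/2}e^{-|x-y_{\ep,j}|^2/(2\ep^2)}$, the formulas \eqref{zero order}, \eqref{nonlinear term} (using $|R_\ep(\var_\ep)|=O(\var_\ep^2(\sum_j U_{\ep,y_{\ep,j}})^{-1})$ and $\|\var_\ep\|_*\le|\ln\ep|^{-(1-\theta)}$), and the crude bound $|\log(\sum_j U_{\ep,y_{\ep,j}}(x))|\le C(1+|x|^2/\ep^2)$ valid for $x$ away from the $y_{\ep,j}$, I would check that, on $\R^N\setminus\bigcup_{j=1}^k B_{\tau/2}(y_{\ep,j})$,
\[
|g_\ep(x)|\ \le\ C\ep^{-2}\,e^{-\frac{c}{\ep^2}\,\mathrm{dist}(x,\{y_{\ep,j}\})^2}
\]
for some $c>0$; the polynomial growth of $\log(\sum_j U_{\ep,y_{\ep,j}})$ is harmless here because $\var_\ep$, $l_\ep$ and $R_\ep(\var_\ep)$ all carry the Gaussian weight $\sum_j e^{-|x-y_{\ep,j}|^2/(2\ep^2)}$.

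Finally I would fix $x_0$ with $\mathrm{dist}(x_0,\{y_{\ep,j}\})\ge\tau$, so that $B_{\tau/2}(x_0)$ lies in the region of the previous display, apply the interior $W^{2,p}$-estimate ($p>N$) for $-\Delta\var_\ep=\ep^{-2}g_\ep$ on $B_{\tau/2}(x_0)$, and then use the Sobolev embedding $W^{2,p}\hookrightarrow C^1$ to obtain
\[
|\nabla\var_\ep(x_0)|\ \le\ C\Big(\|\var_\ep\|_{L^\infty(B_{\tau/2}(x_0))}+\ep^{-2}\|g_\ep\|_{L^\infty(B_{\tau/2}(x_0))}\Big)\ \le\ C\ep^{-2}e^{-\frac{c\tau^2}{4\ep^2}}.
\]
Combined with the first step, both $|\var_\ep(x)|$ and $|\nabla\var_\ep(x)|$ on $\R^N\setminus\bigcup_j B_\tau(y_{\ep,j})$ are then $\le e^{-\tau/\ep}$ once $\ep$ is small (the gains $\tau^2/\ep^2\gg\tau/\ep$ swallow the polynomial prefactors), which gives the lemma with the stated $\tau$. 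I expect the only genuinely delicate point to be the far-field estimate of $g_\ep$ in the second paragraph: controlling the unbounded coefficient $\log(\sum_j U_{\ep,y_{\ep,j}})$ against the Gaussian decay of $\var_\ep$, which is precisely where working in the $*$-norm rather than in $H_\ep$ is what makes things go through.
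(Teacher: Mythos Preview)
Your argument is correct. Note, however, that the paper does not actually prove this lemma: it simply points to Proposition~2.2 in \cite{Luo-Peng-Wang} and states the result. So there is no in-paper proof to compare against, and your write-up is strictly more informative than what appears here.

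A brief comparison with what the cited reference presumably does: the approach in \cite{Luo-Peng-Wang} (and the way the lemma is invoked later in Section~\ref{Local uniqueness results}) relies only on the energy smallness $\|\var_\ep\|_\ep=o(\ep^{N/2})$ together with a comparison/maximum-principle argument outside $\bigcup_j B_{R\ep}(y_{\ep,j})$, followed by elliptic gradient estimates. Your main route instead exploits the $*$-norm bound $\|\var_\ep\|_*\le|\ln\ep|^{-(1-\theta)}$ from Theorem~\ref{direct result}, which immediately gives the pointwise Gaussian decay of $\var_\ep$ without any barrier construction; this is a genuine shortcut that the paper's framework makes available but that a ``generic'' concentration argument does not. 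The trade-off is scope: in Proposition~\ref{prop: rate of concentration} and especially in Lemma~\ref{minus lemma} the same decay is needed for \emph{arbitrary} $k$-peak solutions $u_\ep^{(i)}$, where no $*$-norm control is assumed a priori. For that application one must fall back on the comparison argument you sketch parenthetically (using that $V(x)-2-2\log\sum_j U_{\ep,y_{\ep,j}}\ge\tfrac12\inf V$ outside $\bigcup_j B_{R\ep}(y_{\ep,j})$); this is the version closer in spirit to the cited reference. Your proposal already identifies both routes, so nothing is missing---just be aware that the comparison variant is the one doing the work in Section~\ref{Local uniqueness results}.

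One cosmetic remark: the final bound you obtain is $C\ep^{-2}e^{-c\tau^2/\ep^2}$, which is $\le e^{-\tau'/\ep}$ for any $\tau'>0$ once $\ep$ is small; strictly speaking you may have to relabel the constant $\tau$ in the conclusion (or simply observe that $e^{-c\tau^2/\ep^2}\ll e^{-\tau/\ep}$), but this is harmless.
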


\begin{proposition} \label{prop: rate of concentration} Let
$u_{\ep}=\displaystyle\sum_{j=1}^{k}U_{\ep,y_{\ep,j}}+\var_{\ep}$
be a solution of \eqref{eq: Kirchhoff}.
Then
\begin{eqnarray}\label{asymptotic behavior 2.1}
|y_{\ep,j}-\xi_{j}|=o(\ep).
\end{eqnarray}
\end{proposition}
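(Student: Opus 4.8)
Recall that $\var_{\ep}$ obeys \eqref{estimate of small term} and \eqref{*estimate}; combined with \eqref{critical term} this gives $\|\var_{\ep}\|_{\ep}=O\big(\ep^{N/2+1}/|\ln\ep|^{1-\theta}\big)=o(\ep^{N/2})$, so that Lemma \ref{estimate about u} applies. The plan is to feed the decomposition $u_{\ep}=\sum_{t}U_{\ep,y_{\ep,t}}+\var_{\ep}$ into the local Pohozaev identity of Proposition \ref{prop: Pohozaev identity} on a small fixed ball $\Om=B_{d}(y_{\ep,j})$, with $d>0$ chosen so that $B_{2d}(\xi_{1}),\dots,B_{2d}(\xi_{k})$ are pairwise disjoint. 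On $\pa B_{d}(y_{\ep,j})$ each Gaussian $U_{\ep,y_{\ep,t}}$ together with its first derivatives is $O(e^{-c/\ep^{2}})$; moreover \eqref{*estimate} gives $|\var_{\ep}|=O(e^{-c/\ep^{2}})$ there, and Lemma \ref{estimate about u} gives $|\var_{\ep}|+|\nabla\var_{\ep}|=O(e^{-\tau/\ep})$ there. Hence every boundary term on the right of \eqref{eq: Pohozaev} is $O(e^{-c/\ep})$ for some $c>0$, so that $\int_{B_{d}(y_{\ep,j})}\frac{\pa V(x)}{\pa x_{i}}u_{\ep}^{2}\,dx=O(e^{-c/\ep})$ for $i=1,\dots,N$.

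Then I would expand the left-hand side. Since $U_{\ep,y_{\ep,t}}=O(e^{-c/\ep^{2}})$ on $B_{d}(y_{\ep,j})$ for $t\neq j$, on this ball $u_{\ep}^{2}=U_{\ep,y_{\ep,j}}^{2}+2U_{\ep,y_{\ep,j}}\var_{\ep}+\var_{\ep}^{2}+O(e^{-c/\ep^{2}})$. Writing $\frac{\pa V}{\pa x_{i}}(x)=\frac{\pa V}{\pa x_{i}}(y_{\ep,j})+\sum_{l}\frac{\pa^{2}V}{\pa x_{i}\pa x_{l}}(y_{\ep,j})(x_{l}-y_{\ep,j,l})+O(|x-y_{\ep,j}|^{2})$, the radial symmetry of $U_{\ep,y_{\ep,j}}^{2}$ about $y_{\ep,j}$ kills the linear term, and the scaling $x=\ep z+y_{\ep,j}$ gives
\[
\int_{B_{d}(y_{\ep,j})}\frac{\pa V(x)}{\pa x_{i}}\,U_{\ep,y_{\ep,j}}^{2}\,dx=\kappa_{j}\,\ep^{N}\,\frac{\pa V(y_{\ep,j})}{\pa x_{i}}+O(\ep^{N+2}),
\]
with $\kappa_{j}=e^{V(y_{\ep,j})+N}\int_{\R^{N}}e^{-|z|^{2}}\,dz$ bounded away from $0$ uniformly in $\ep$. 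For the cross term and the quadratic term I would apply the Cauchy--Schwarz inequality with $\|U_{\ep,y_{\ep,j}}\|_{2}=O(\ep^{N/2})$ and the sharp bound $\|\var_{\ep}\|_{\ep}=O\big(\ep^{N/2+1}/|\ln\ep|^{1-\theta}\big)$; this yields $\big|\int_{B_{d}(y_{\ep,j})}\frac{\pa V}{\pa x_{i}}U_{\ep,y_{\ep,j}}\var_{\ep}\big|=O\big(\ep^{N+1}/|\ln\ep|^{1-\theta}\big)$ and $\big|\int_{B_{d}(y_{\ep,j})}\frac{\pa V}{\pa x_{i}}\var_{\ep}^{2}\big|=O\big(\ep^{N+2}/|\ln\ep|^{2(1-\theta)}\big)$. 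Collecting the estimates and dividing by $\ep^{N}$, I get $\kappa_{j}\frac{\pa V(y_{\ep,j})}{\pa x_{i}}=O(\ep^{2})+O\big(\ep/|\ln\ep|^{1-\theta}\big)=o(\ep)$, hence $\nabla V(y_{\ep,j})=o(\ep)$.

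To conclude, $(V_{2})$ gives $\nabla V(\xi_{j})=0$ and the invertibility of the Hessian $D^{2}V(\xi_{j})$, whence $\nabla V(y_{\ep,j})=D^{2}V(\xi_{j})(y_{\ep,j}-\xi_{j})+O(|y_{\ep,j}-\xi_{j}|^{2})$; since $y_{\ep,j}\to\xi_{j}$, this forces $|y_{\ep,j}-\xi_{j}|\le C\,|\nabla V(y_{\ep,j})|\,(1+o(1))=o(\ep)$, which is \eqref{asymptotic behavior 2.1}.

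The main obstacle is the bookkeeping for the correction $\var_{\ep}$: every contribution of $\var_{\ep}$, namely the interior cross term $\int U_{\ep,y_{\ep,j}}\var_{\ep}$ (the delicate one), the quadratic term $\int\var_{\ep}^{2}$, and all the boundary integrals on $\pa B_{d}(y_{\ep,j})$, must be shown to be of strictly smaller order than $\ep^{N+1}$. This rests on the sharp bound $\|\var_{\ep}\|_{\ep}=O\big(\ep^{N/2+1}/|\ln\ep|^{1-\theta}\big)$ and not merely on $\|\var_{\ep}\|_{\ep}=o(\ep^{N/2})$ (which would give only $|y_{\ep,j}-\xi_{j}|=o(1)$), together with the pointwise decay of $\var_{\ep}$ and $\nabla\var_{\ep}$ away from the peaks supplied by Lemma \ref{estimate about u}. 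A minor but genuine point is that the Pohozaev ball must be centered at $y_{\ep,j}$, not at $\xi_{j}$, so that the odd term in the Taylor expansion of $\pa_{i}V$ integrates to zero exactly against the radial weight $U_{\ep,y_{\ep,j}}^{2}$.
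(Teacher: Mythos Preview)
Your proof is correct and follows the same route as the paper's: Pohozaev identity \eqref{eq: Pohozaev} on $B_{d}(y_{\ep,j})$, exponentially small boundary terms via Lemma \ref{estimate about u}, Taylor expansion of $\partial_{i}V$ about $y_{\ep,j}$ combined with the radial symmetry of $U_{\ep,y_{\ep,j}}^{2}$, and finally the invertibility of $D^{2}V(\xi_{j})$ from $(V_{2})$. The only bookkeeping difference is that the paper subtracts the constant $\partial_{i}V(y_{\ep,j})$ \emph{before} splitting $u_{\ep}^{2}=U_{\ep,y_{\ep,j}}^{2}+2U_{\ep,y_{\ep,j}}\var_{\ep}+\var_{\ep}^{2}$, so its cross term already carries an extra factor $|x-y_{\ep,j}|$ and only $\|\var_{\ep}\|_{\ep}=o(\ep^{N/2})$ is needed, whereas you bound $\int \partial_{i}V\cdot U_{\ep,y_{\ep,j}}\var_{\ep}$ directly and therefore rely on the sharper $\|\var_{\ep}\|_{\ep}=O(\ep^{N/2+1}/|\ln\ep|^{1-\theta})$ --- both versions give $o(\ep^{N+1})$ and the conclusion is the same.
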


\begin{proof}

 Let $u=u_{\ep},\ \Omega=B_{\delta}(y_{\ep,j})$ in \eqref{eq: Pohozaev},  we obtain
  \begin{equation}
 \begin{aligned}\int_{B_{\delta}(y_{\ep,j})}\frac{\pa V(x)}{\pa x_{i}}u_{\ep}^{2} & =\ep^{2}\int_{\pa B_{\delta}(y_{\ep,j})}\Big(|\na u_{\ep}|^{2}\nu_{i}-2\frac{\pa u_{\ep}}{\pa\nu}\frac{\pa u_{\ep}}{\pa x_{i}}\Big) \\
 & \quad+\int_{\pa B_{\delta}(y_{\ep,j})}(V(x)+1)u_{\ep}^{2}\nu_{i}-\int_{\pa B_{\delta}(y_{\ep,j})}\nu_{i}u_{\ep}^{2}\log u_{\ep}^{2},
 \end{aligned}
\label{eq: Pohozaev1}
\end{equation}
From Lemma \ref{estimate about u}, we have
\begin{equation}\label{boundary estimate}
 |u_{\ep}|+|\nabla u_{\ep}|\leq Ce^{-\frac{\gamma}{\ep}},\ \ \forall x\in\partial B_{\delta}(y_{\ep,j}),\ j=1,\cdots,k,
\end{equation}
here and in what follows $\gamma>0$ denote a constant which may change from line to line.
 By \eqref{boundary estimate}, for $x\in\partial B_{\delta}(y_{\ep,j})$, we find
 \[
\big| u_{\ep}^{2}\log u_{\ep}^{2}\big|\leq Ce^{-\frac{2\gamma}{\ep}}\big(\frac{2\gamma}{\ep}-\log C\big)
 =O(e^{-\frac{\gamma}{\ep}}).
 \]
So, \eqref{eq: Pohozaev1} equivalent to
 \begin{equation}\label{2.13}
   \int_{B_{\delta}(y_{\ep,j})}\frac{\pa V(x)}{\pa x_{i}}u_{\ep}^{2}=O(e^{-\frac{\gamma}{\ep}}).
 \end{equation}

On the other hand,
\begin{equation}
 \begin{aligned}
\int_{B_{\delta}(y_{\ep,j})}&\left(\frac{\pa V(x)}{\pa x_{i}}-\frac{\pa V(y_{\ep,j})}{\pa x_{i}}\right)u_{\ep}^{2}\\
  =&\int_{B_{\delta}(y_{\ep,j})}\langle\nabla^{2}V(y_{\ep,j}),x-y_{\ep,j}\rangle u_{\ep}^{2}
     +O\left(\int_{B_{\delta}(y_{\ep,j})}|x-y_{\ep,j}|^{2}u_{\ep}^{2}\right)\\
  =&\int_{B_{\delta}(y_{\ep,j})}\langle\nabla^{2}V(y_{\ep,j}),x-y_{\ep,j}\rangle
  (U_{\ep,y_{\ep,j}}^{2}+2U_{\ep,y_{\ep,j}}\var_{\ep}+\var_{\ep}^{2})
  +O(e^{-\frac{\gamma}{\ep}}+\ep^{N+2}).
 \end{aligned}
\label{cp left}
\end{equation}
Here we use Lemma \ref{estimate about u}.
Now, by the symmetry of $U_{\ep,y_{\ep,j}}$, we have
\[
\int_{B_{\delta}(y_{\ep,j})}\langle\nabla^{2}V(y_{\ep,j}),x-y_{\ep,j}\rangle U^{2}_{\ep,y_{\ep,j}}=0.
\]
By H\"{o}lder inequality and \eqref{estimate of small term}, we can get
\[
\int_{B_{\delta}(y_{\ep,j})}\langle\nabla^{2}V(y_{\ep,j}),x-y_{\ep,j}\rangle2U_{\ep,y_{\ep,j}}\var_{\ep}
+
\int_{B_{\delta}(y_{\ep,j})}\langle\nabla^{2}V(y_{\ep,j}),x-y_{\ep,j}\rangle\var_{\ep}^{2}=o(\ep^{N+1}).
\]
Inserting above into \eqref{cp left} and combine with \eqref{2.13}, we obtain
\[
\int_{B_{\delta}(y_{\ep,j})}\frac{\pa V(y_{\ep,j})}{\pa x_{i}}u_{\ep}^{2}=o(\ep^{N+1}).
\]
Then, for $ l=1,\cdots,N$,
\[
\int_{B_{\delta}(y_{\ep,j})}\langle\frac{\nabla^{2} V(\xi_{j})}{\pa x_{i}\pa x_{l}},y_{\ep,j,l}-\xi_{j,l}\rangle u_{\ep}^{2}=o(\ep^{N+1}).
\]
So, combining the condition $(V_{2})$ and
$
\displaystyle\int_{B_{\delta}(y_{\ep,j})} u_{\ep}^{2}=O(\ep^{N})$,
we get \eqref{asymptotic behavior 2.1}.
 \end{proof}

\begin{lemma}  Assume
$u_{\ep}=\displaystyle\sum_{j=1}^{k}U_{\ep,y_{\ep,j}}+\var_{\ep}$
be a solution of \eqref{eq: Kirchhoff}.
Then
\begin{equation*}\label{small quantity}
  \|\var_{\ep}\|_{\ep}=O(\ep^{\frac{N}{2}+2}).
\end{equation*}
\end{lemma}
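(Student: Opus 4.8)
The plan is to bootstrap the bound $\|\var_{\ep}\|_{\ep}=O(\ep^{\frac N2+1})$ from Theorem~\ref{direct result} up to $O(\ep^{\frac N2+2})$; the gain comes entirely from the improved localization of the peaks established in Proposition~\ref{prop: rate of concentration}. Since $u_{\ep}=\sum_{j=1}^{k}U_{\ep,y_{\ep,j}}+\var_{\ep}$ is the solution produced by Theorem~\ref{direct result}, we have $\var_{\ep}\in E_{\ep,y}$ with $y=(y_{\ep,1},\dots,y_{\ep,k})$, so Proposition~\ref{prop: reduction map} — specifically the estimate \eqref{estimate of small term}, whose constant is uniform for $y_{\ep,j}\in B_{\delta}(\xi_{j})$ — gives
\[
\|\var_{\ep}\|_{\ep}\le C\Big(\sum_{j=1}^{k}\big|\nabla V(y_{\ep,j})\big|\,\ep^{\frac N2+1}+\ep^{\frac N2+2}\Big).
\]
Thus it suffices to prove $|\nabla V(y_{\ep,j})|=o(\ep)$ for each $j=1,\dots,k$.

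To this end I would first apply Proposition~\ref{prop: rate of concentration}: its hypothesis $\|\var_{\ep}\|_{\ep}=o(\ep^{\frac N2})$ holds because we already have the (weaker) bound $O(\ep^{\frac N2+1})$, whence $|y_{\ep,j}-\xi_{j}|=o(\ep)$. Then, invoking $(V_{2})$ — namely $\nabla V(\xi_{j})=0$ together with the twice differentiability of $V$ at $\xi_{j}$ — a Taylor expansion yields
\[
\nabla V(y_{\ep,j})=\nabla^{2}V(\xi_{j})\,(y_{\ep,j}-\xi_{j})+o\big(|y_{\ep,j}-\xi_{j}|\big)=o(\ep).
\]
Substituting into the displayed inequality gives $\|\var_{\ep}\|_{\ep}=O\big(o(\ep)\cdot\ep^{\frac N2+1}+\ep^{\frac N2+2}\big)=O(\ep^{\frac N2+2})$, which is the assertion.

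The proof is short since all the substantial estimates are already in place; the one point I would verify carefully is the absence of circular reasoning. Both Proposition~\ref{prop: rate of concentration} and Lemma~\ref{estimate about u}, on which its proof rests, use only the a priori bound $\|\var_{\ep}\|_{\ep}=o(\ep^{\frac N2})$, which is strictly weaker than the conclusion we are after; thus the bound of Theorem~\ref{direct result} feeds into Proposition~\ref{prop: rate of concentration} to give $|y_{\ep,j}-\xi_{j}|=o(\ep)$, hence $|\nabla V(y_{\ep,j})|=o(\ep)$, and only then does \eqref{estimate of small term} deliver the sharper $\|\var_{\ep}\|_{\ep}=O(\ep^{\frac N2+2})$, so no vicious circle occurs. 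Alternatively, instead of citing \eqref{estimate of small term} one may re-derive it on the spot from Proposition~\ref{inversibility} and Lemmas~\ref{lem: estimate for the first order}--\ref{lem: error estimates}, which would make the lemma self-contained.
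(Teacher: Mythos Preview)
Your argument is correct and matches the paper's strategy: both feed the localization estimate $|y_{\ep,j}-\xi_{j}|=o(\ep)$ from Proposition~\ref{prop: rate of concentration} back into a bound of the form $\|\var_{\ep}\|_{\ep}\le C\|l_{\ep}\|_{\ep}$ to upgrade $\ep^{N/2+1}$ to $\ep^{N/2+2}$. The only cosmetic difference is that the paper re-derives that bound on the spot via the coercivity inequality $\rho\|\var_{\ep}\|_{\ep}^{2}\le\langle L_{\ep}\var_{\ep},\var_{\ep}\rangle$ on $E_{\ep,y}$ and writes the outcome directly as $\ep^{\frac{N}{2}}O(\ep^{2}+\ep|y_{\ep,j}-\xi_{j}|)$, whereas you cite \eqref{estimate of small term} and then Taylor-expand $\nabla V(y_{\ep,j})$ --- which is precisely the alternative you mention at the end.
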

\begin{proof}
First, we know the following  property
\begin{eqnarray}\label{property}
 \rho\|\var_{\ep}\|_{\ep}^2\leq\langle L_{\ep}\var_{\ep}, \var_{\ep}\rangle,  &  & \var_{\ep}\in E_{\ep,y}.
\end{eqnarray}
As the proof of (\ref{property}) is standard (see e.g. \cite{Cao-Noussair-Yan-1999}), we omit the details.
We mainly estimate $\langle L_{\ep}\var_{\ep}, \var_{\ep}\rangle$.
From \eqref{new form equation}, we have
\begin{equation*}\label{compute property}
  \langle L_{\ep}\var_{\ep}, \var_{\ep}\rangle=\int l_{\ep}\var_{\ep}+\int R_{\ep}(\var_{\ep})\var_{\ep},
\end{equation*}
where $L_{\ep}\var_{\ep},\ l_{\ep}$ and $R_{\ep}(\var_{\ep})$ are defined in \eqref{linear part}--\eqref{nonlinear term}. By \eqref{example}, we get
\[
\int 2\Big(\big(\sum_{j=1}^{k}U_{\ep,y_{\ep,j}}\big)
    \log\big(\sum_{t=1}^{k}U_{\ep,y_{\ep,t}}\big)-\sum_{j=1}^{k}(U_{\ep,y_{\ep,j}}\log U_{\ep,y_{\ep,j}})\Big)\var_{\ep}=O(e^{-\frac{c}{\ep^{2}}}\|\var_{\ep}\|_{\ep}).
\]
Under the condition $(V_{2})$, we obtaian
\[  \int\sum_{j=1}^{k}\big(V(y_{\ep,j})-V(x)\big)U_{\ep,y_{\ep,j}}\var_{\ep}
=\ep^{\frac{N}{2}}O(\ep^{2}+\ep|y_{\ep,j}-\xi_{j}|)\|\var_{\ep}\|_{\ep}.
 \]
 So, we find
\begin{equation}\label{5.11}
 \int l_{\ep}\var_{\ep}=\ep^{\frac{N}{2}}O(\ep^{2}+\ep|y_{\ep,j}-\xi_{j}|)\|\var_{\ep}\|_{\ep}.
\end{equation}

By \eqref{nonlinear term} and \eqref{*estimate}, we have
\begin{equation}\label{5.12}
\begin{aligned}
\int R_{\ep}(\var_{\ep})\var_{\ep}=
&\int2\bigg[\Big(\sum_{j=1}^{k}U_{\ep,y_{\ep,j}}+\var_{\ep}\Big)
    \log\Big(\sum_{t=1}^{k}U_{\ep,y_{\ep,t}}+\var_{\ep}\Big)\\
&-\Big(\sum_{j=1}^{k}U_{\ep,y_{\ep,j}}\Big)
    \log\Big(\sum_{t=1}^{k}U_{\ep,y_{\ep,t}}\Big)-
    \Big(\log\big(\sum_{t=1}^{k}U_{\ep,y_{\ep,t}}\big)+1\Big)\var_{\ep}\bigg]\var_{\ep}\\
=&O\Big(\int {\var_{\ep}^{2}}\big(\sum_{j=1}^{k}U_{\ep,y_{\ep,j}}\var_{\ep}\big)^{-1}\Big)
=O\Big(\|\var_{\ep}\|_{*}\|\var_{\ep}\|^{2}_{\ep}\Big)=o(1)\|\var_{\ep}\|_{\ep}^{2}.
\end{aligned}\end{equation}
Combining \eqref{asymptotic behavior 2.1}, \eqref{property}-\eqref{5.12}, we get
\begin{equation*}
\|\var_{\ep}\|_{\ep}=\ep^{\frac{N}{2}}O(\ep^{2}+\ep|y_{\ep,j}-\xi_{j}|)=O(\ep^{\frac{N}{2}+2}).
\end{equation*}
\end{proof}

Now we devoted to prove Theorem \ref{thm: uniqueness}.
We argue by way of contradiction.
Assume $u_{\ep}^{(i)}=\displaystyle\sum_{j=1}^{k}U_{\ep,y^{(i)}_{\ep,j}}+\var_{\ep}^{(i)}(i=1,2)$
are two distinct solutions concentrating around $\xi_{j}$.
 Set
\[
\eta_{\ep}=\frac{u_{\ep}^{(1)}-u_{\ep}^{(2)}}{\|u_{\ep}^{(1)}-u_{\ep}^{(2)}\|_{L^{\wq}(\R^{N})}},
\]
then
\begin{equation}\label{eq: comparison eq. 1}
 -\ep^{2}\De\eta_{\ep}+V(x)\eta_{\ep}=C_{\ep}(x)\eta_{\ep},
\end{equation}
where
\[
C_{\ep}(x)= 2\Big[\log \big(u_{\ep}^{(1)}+t(u_{\ep}^{(2)}-u_{\ep}^{(1)})\big)+1\Big],\ \ \ 0\leq t\leq1.
\]
It is clear that
$\|\eta_{\ep}\|_{L^{\wq}(\R^{N})}=1$.
We will prove that
\begin{equation}\label{3.1}
 \|\eta_{\ep}\|_{L^{\wq}(\R^{N})}=o(1)
\end{equation} to obtain a contradiction.
For fixed $j\in\{1,\cdots,k\}$, set \[
\eta_{\ep,j}(x)=\eta_{\ep}(\ep x+y^{(1)}_{\ep,j}).
\]
To prove (\ref{3.1}), we will prove that
$ \|\eta_{\ep,j}\|_{L^{\wq}(B_{R}(0))}=o(1)$ and $\|\eta_{\ep,j}\|_{L^{\wq}(\R^{N}\backslash B_{R}(0))}=o(1)$ holds separately.

First we study the asymptotic behavior of $\eta_{\ep,j}$.
\begin{proposition}\label{prop: convergence of xi_epsilon}
There exist $d_{\beta,j}\in\R$, $\beta=1,\cdots,N,\ j=1,\cdots,k$, such that (up to a subsequence)
\begin{eqnarray*}
\eta_{\ep,j}\to\sum_{\beta=1}^{N}d_{\beta,j}\frac{\partial U^{j}}{\partial x_{\beta}},  & \text{in }C_{\loc}^{1}(\R^{N}),
\end{eqnarray*}
as $\ep\to0$, where $ U^{j}$ solves
\[
-\ep^{2}\De U^{j}+V(\xi_{j})U^{j}=U^{j}\log (U^{j})^{2}.
\] .\end{proposition}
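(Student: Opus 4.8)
The plan is to show that $\eta_{\ep,j}$ is bounded in $C^{1,\al}_{\loc}(\R^N)$, extract a convergent subsequence, identify the limit equation, and then use the non-degeneracy of $U^j$ to conclude that the limit lies in the kernel. First I would rewrite \eqref{eq: comparison eq. 1} in the rescaled variable $x\mapsto \ep x+y^{(1)}_{\ep,j}$: setting $\eta_{\ep,j}(x)=\eta_\ep(\ep x+y^{(1)}_{\ep,j})$ and $\widetilde V_{\ep,j}(x)=V(\ep x+y^{(1)}_{\ep,j})$, the equation becomes
\begin{equation*}
-\De\eta_{\ep,j}+\widetilde V_{\ep,j}(x)\eta_{\ep,j}=\widetilde C_{\ep,j}(x)\eta_{\ep,j},\qquad x\in\R^N,
\end{equation*}
where $\widetilde C_{\ep,j}(x)=2\big[\log\big(u^{(1)}_\ep(\ep x+y^{(1)}_{\ep,j})+t(u^{(2)}_\ep-u^{(1)}_\ep)(\ep x+y^{(1)}_{\ep,j})\big)+1\big]$. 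Since $\|\eta_\ep\|_{L^\infty}=1$ we have $\|\eta_{\ep,j}\|_{L^\infty}=1$; and since $y^{(1)}_{\ep,j}\to\xi_j$ by $\var_\ep\in E_{\ep,y}$ and Proposition \ref{prop: rate of concentration}, $\widetilde V_{\ep,j}\to V(\xi_j)$ locally uniformly.

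The key point is to control $\widetilde C_{\ep,j}$ on compact sets. Writing $u^{(l)}_\ep=\sum_{i}U_{\ep,y^{(l)}_{\ep,i}}+\var^{(l)}_\ep$, in the rescaled variable near $y^{(1)}_{\ep,j}$ one has $U_{\ep,y^{(1)}_{\ep,j}}(\ep x+y^{(1)}_{\ep,j})=e^{\frac{V(y^{(1)}_{\ep,j})+N}{2}}e^{-|x|^2/2}\to U^j(x)$ uniformly on compacts, while the other bumps $U_{\ep,y^{(l)}_{\ep,i}}$ with $i\neq j$ are exponentially small there, and $\|\var^{(l)}_\ep\|_\ep=O(\ep^{N/2+2})$ combined with Lemma \ref{estimate about u} (and elliptic estimates on the rescaled $\var$) shows $\var^{(l)}_\ep(\ep x+y^{(1)}_{\ep,j})\to 0$ locally uniformly. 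Hence $u^{(l)}_\ep(\ep x+y^{(1)}_{\ep,j})\to U^j(x)>0$ locally uniformly, so $\widetilde C_{\ep,j}(x)\to 2(\log U^j(x)+1)$ locally uniformly, and in particular $\widetilde C_{\ep,j}$ is bounded on each $B_R(0)$. By standard interior $L^p$ and Schauder estimates applied to the displayed equation, $\{\eta_{\ep,j}\}$ is bounded in $C^{1,\al}_{\loc}(\R^N)$; Arzelà–Ascoli and a diagonal argument give a subsequence with $\eta_{\ep,j}\to\eta_j$ in $C^1_{\loc}(\R^N)$, and passing to the limit, $\eta_j$ solves
\begin{equation*}
-\De\eta_j+V(\xi_j)\eta_j=2(\log U^j+1)\eta_j,\qquad\text{i.e.}\qquad \L\eta_j=0.
\end{equation*}
Here $\L$ is exactly the linearized operator at $U^j$ from the Preliminaries (with $\omega=V(\xi_j)$), and elliptic regularity plus the exponential decay $|U^j(x)|=e^{\frac{V(\xi_j)+N-|x|^2}{2}}$ of the coefficient give $\eta_j\in H^1(\R^N)$ after checking a growth bound (the $L^\infty$ bound on $\eta_j$ together with the exponentially decaying potential forces $\eta_j$ to decay). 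The non-degeneracy result $\Ker\L=\span\{\pa U^j/\pa x_\beta:1\le\beta\le N\}$ then yields $\eta_j=\sum_{\beta=1}^N d_{\beta,j}\frac{\pa U^j}{\pa x_\beta}$ for some constants $d_{\beta,j}\in\R$, which is the claim.

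The main obstacle I expect is the justification that $\widetilde C_{\ep,j}$ is locally bounded — equivalently that $u^{(l)}_\ep(\ep x+y^{(1)}_{\ep,j})$ stays bounded away from $0$ on compacts so that its logarithm does not blow up. This is where the peak structure $u^{(l)}_\ep=\sum_i U_{\ep,y^{(l)}_{\ep,i}}+\var^{(l)}_\ep$, the smallness of $\var^{(l)}_\ep$ in both $\|\cdot\|_\ep$ and $\|\cdot\|_*$, Lemma \ref{estimate about u}, and the concentration estimate $|y^{(1)}_{\ep,j}-y^{(2)}_{\ep,j}|=o(\ep)$ (a consequence of Proposition \ref{prop: rate of concentration}) all have to be combined; once this uniform convergence $u^{(l)}_\ep(\ep x+y^{(1)}_{\ep,j})\to U^j$ is in hand, the rest is the routine elliptic-compactness-plus-non-degeneracy scheme. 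A secondary technical point is verifying the decay/$H^1$-membership of the limit $\eta_j$ so that the non-degeneracy theorem applies; this follows from the $L^\infty$ bound and a comparison/maximum-principle argument against the exponentially small tail of $2(\log U^j+1)$, analogous to the away-from-peak estimates used elsewhere in the paper.
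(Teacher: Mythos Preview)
Your proposal is correct and follows essentially the same scheme as the paper: rescale \eqref{eq: comparison eq. 1}, use $\|\eta_{\ep,j}\|_{L^\infty}\le 1$ together with local boundedness of the coefficient $\widetilde C_{\ep,j}$ to invoke elliptic regularity and extract a $C^1_{\loc}$ limit, identify the limiting equation as the linearization at $U^j$, and conclude via non-degeneracy. The paper handles the coefficient by expanding $u^{(2)}_\ep-u^{(1)}_\ep$ through a mean-value formula on the bumps (your direct argument that both $u^{(l)}_\ep(\ep x+y^{(1)}_{\ep,j})\to U^j$ locally uniformly is equivalent and arguably cleaner), and it glosses over the $H^1$-membership of $\eta_j$ that you flag as a secondary point.
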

\begin{proof}
We will prove that the limiting function of $\eta_{\ep,j}$ belongs to the kernel of the linear operator associated to $U^{j}$.

In view of $\|\eta_{\ep,j}\|_{L^{\wq}(\R^{N})}\leq1$, the elliptic regularity theory implies
that $\eta_{\ep,j}\in C_{\loc}^{1,\theta}(\R^{N})$  with respect
to $\ep$ for some $\theta\in(0,1)$.
As a consequence, we assume (up to a subsequence) that
\begin{eqnarray*}
\eta_{\ep,j}\to\eta_{j} &  & \text{in }C_{\loc}^{1}(\R^{N}).
\end{eqnarray*}
We claim that $\eta_{j}$ satisfies
\begin{equation}
-\De\eta_{j}+V(\xi_{j})\eta_{j}=2\Big[\log U^{j}+1\Big]\eta_{j}.\label{eq: blow-up equations}
\end{equation}
Then by the fact that that $U^{j}$ is nondegenerate, we have $\eta_{j}=\sum_{\beta=1}^{N}d_{\beta,j}\frac{\partial U^{j}}{\partial x_{\beta}}$ for some $d_{\beta,j}\in\R$ ($\beta=1,\cdots,N$), and thus
Proposition \ref{prop: convergence of xi_epsilon} is proved.

Next, we prove (\ref{eq: blow-up equations}).
From \eqref{eq: comparison eq. 1}, we have $\eta_{\ep,j}$ satisfies
\begin{equation}\label{origion eq}
-\De\eta_{\ep,j}=-\ep^{2}\De\eta_{\ep}(\ep x+y^{(1)}_{\ep,j})=-V(\ep x+y^{(1)}_{\ep,j})\eta_{\ep,j}+C_{\ep}(\ep x+y^{(1)}_{\ep,j})\eta_{\ep,j}.
\end{equation}
Now we estimate $C_{\ep}(\ep x+y^{(1)}_{\ep,j})$.
From \eqref{asymptotic behavior 2.1},
\begin{equation}\label{minius}
\begin{aligned}
  U_{\ep,y^{(1)}_{\ep,t}}-U_{\ep,y^{(2)}_{\ep,t}}
  &=\frac{y^{(1)}_{\ep,t}-y^{(2)}_{\ep,t}}{\ep}
  \nabla U_{y_{t}}\Big(\frac{x-y^{(1)}_{\ep,t}+\theta(y^{(1)}_{\ep,t}-y^{(2)}_{\ep,t})}{\ep}\Big)\\
  &=o(1)\nabla U_{y_{t}}\Big(\frac{x-y^{(1)}_{\ep,t}+\theta(y^{(1)}_{\ep,t}-y^{(2)}_{\ep,t})}{\ep}\Big),
  \end{aligned}
\end{equation}
where $0<\theta<1,\ t=1,\cdots,k$ and $U_{y_{t}}$ satisfies
\[
-\De U_{y_{t}}+V(y_{\ep,t})U_{y_{t}}=U_{y_{t}}\log U_{y_{t}}^{2}.
\]
For simplicity, here and what follows, we denote \[
 z_{\ep,t}:=\frac{x-y^{(1)}_{\ep,t}+\theta(y^{(1)}_{\ep,t}-y^{(2)}_{\ep,t})}{\ep}.
 \]
Then,
\begin{equation}\label{big u minus}
\begin{aligned}
u_{\ep}^{(1)}-u_{\ep}^{(2)}&=\sum_{t=1}^{k}\Big(U_{\ep,y^{(1)}_{\ep,t}}-U_{\ep,y^{(2)}_{\ep,t}}\Big)
+O(|\var_{\ep}^{(1)}|+|\var_{\ep}^{(2)}|)\\
&=o(1)\sum_{t=1}^{k}\nabla U_{y_{t}}\Big(z_{\ep,t}\Big)+O(|\var_{\ep}^{(1)}|+|\var_{\ep}^{(2)}|).
 \end{aligned}
 \end{equation}
 So, for $x\in B_{d}(y^{(1)}_{\ep,j})$
 \[\begin{aligned}
C_{\ep}(x)=& 2\Big[\log \big(u_{\ep}^{(1)}+t(u_{\ep}^{(2)}-u_{\ep}^{(1)})\big)+1\Big]\\
=& 2\log\Big(\sum_{s=1}^{k}U_{\ep,y^{(1)}_{\ep,s}}(x)
+o(1)\sum_{t=1}^{k}\nabla U_{y_{t}}(z_{\ep,t})+O(|\var_{\ep}^{(1)}|+|\var_{\ep}^{(2)}|)\Big)+2,
 \end{aligned}\]
Then, we know
 \[\begin{aligned}
C_{\ep}(\ep x+y^{(1)}_{\ep,j})
=& 2\log\Big(U_{\ep,y^{(1)}_{\ep,j}}(\ep x+y^{(1)}_{\ep,j})+o(1)\nabla U_{y_{j}}(z_{\ep,j,j})
+\sum_{s\neq j}U_{\ep,y^{(1)}_{\ep,s}}(\ep x+y^{(1)}_{\ep,j}) \\
& +o(1)\sum_{t\neq j}\nabla U_{y_{t}}(z_{\ep,t,j})+O\big(|\var_{\ep}^{(1)}(\ep x+y^{(1)}_{\ep,j})|+|\var_{\ep}^{(2)}(\ep x+y^{(1)}_{\ep,j})|\big)\Big)+2\\
=& 2\log\Big(U_{\ep,y^{(1)}_{\ep,j}}(\ep x+y^{(1)}_{\ep,j})+o(1)\nabla U_{y_{j}}(z_{\ep,j,j})+o(1)\Big)+2\\
&+O\Big(\frac{\sum_{s\neq j}U_{\ep,y^{(1)}_{\ep,s}}(\ep x+y^{(1)}_{\ep,j})+o(1)\sum_{t\neq j}\nabla U_{y_{t}}(z_{\ep,t,j})}{U_{\ep,y^{(1)}_{\ep,j}}(\ep x+y^{(1)}_{\ep,j})}\Big)\\
=& 2\log\Big(U_{\ep,y^{(1)}_{\ep,j}}(\ep x+y^{(1)}_{\ep,j})+o(1)\nabla U_{y_{j}}(z_{\ep,j,j})\Big)+2+O(e^{-\frac{\gamma}{\ep}}),
\ \ x\in B_{\frac{d}{\ep}}(0),
 \end{aligned}\]
where $$z_{\ep,t,j}=\frac{\ep x+y^{(1)}_{\ep,j}-y^{(1)}_{\ep,t}+\theta(y^{(1)}_{\ep,t}-y^{(2)}_{\ep,t})}{\ep}$$ and
$\gamma>0$ is a constant.
Now recall \eqref{origion eq}, we know
\begin{equation}\label{origion eq2}
\begin{aligned}
-&\De\eta_{\ep,j}+V(\ep x+y^{(1)}_{\ep,j})\eta_{\ep,j}\\&=\Big(2\log\big(U_{\ep,y^{(1)}_{\ep,j}}(\ep x+y^{(1)}_{\ep,j})+o(1)\nabla U_{y_{j}}(z_{\ep,j})+o(1)\big)+2+O(e^{-\frac{\gamma}{\ep}})\Big)\eta_{\ep,j}.
\end{aligned}\end{equation}
Letting $\ep\rightarrow0$ in \eqref{origion eq2},
we obtain (\ref{eq: blow-up equations}). The proof is completed.
\end{proof}

 Next, similar to Lemma \ref{estimate about u}, we find
\begin{lemma}\label{minus lemma}
  There exists a small constant $d>0$, such that
  \[
  |\eta_{\ep}(x)|+
  |\nabla\eta_{\ep}(x)|=O\big(e^{-\frac{d}{\ep}}\big),\ \ \forall x\in\R^{N}\setminus\bigcup_{j=1}^{k}B_{d}(y^{(1)}_{\ep,j}).
  \]
\end{lemma}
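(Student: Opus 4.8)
The plan is to adapt the comparison-function argument behind Lemma~\ref{estimate about u} to the linear equation~\eqref{eq: comparison eq. 1} for $\eta_{\ep}$, the point being that \emph{far from the peaks the logarithmic coefficient works in our favour}. Rewriting~\eqref{eq: comparison eq. 1} as
\[
-\ep^{2}\De\eta_{\ep}+\bigl(V(x)-C_{\ep}(x)\bigr)\eta_{\ep}=0,\qquad C_{\ep}(x)=2\log\bigl((1-t)u^{(1)}_{\ep}+tu^{(2)}_{\ep}\bigr)+2,
\]
and recalling that $u^{(1)}_{\ep},u^{(2)}_{\ep}$ are both concentrated at the $\xi_{j}$ with $|y^{(1)}_{\ep,j}-y^{(2)}_{\ep,j}|=o(\ep)$ by Proposition~\ref{prop: rate of concentration}, the convex combination $(1-t)u^{(1)}_{\ep}+tu^{(2)}_{\ep}$ is positive but exponentially small outside a fixed neighbourhood of $\{y^{(1)}_{\ep,j}\}$. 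Hence $C_{\ep}(x)\le-\ga\ep^{-1}$ there for some $\ga>0$, so the effective potential $V(x)-C_{\ep}(x)\ge\ga\ep^{-1}$ is positive and blows up; this is exactly what makes an exponential barrier a supersolution.

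Concretely, I would fix $d>0$ small, larger than the constant $\tau$ of Lemma~\ref{estimate about u}, so that on $\Om_{\ep}:=\R^{N}\setminus\bigcup_{j=1}^{k}\overline{B_{d}(y^{(1)}_{\ep,j})}$ one has, combining $U_{\ep,y}(x)=e^{(V(y)+N)/2}e^{-|x-y|^{2}/(2\ep^{2})}$, Lemma~\ref{estimate about u} applied to $\var_{\ep}^{(1)},\var_{\ep}^{(2)}$, and $|y^{(1)}_{\ep,j}-y^{(2)}_{\ep,j}|=o(\ep)$, the bound $0<u^{(i)}_{\ep}(x)\le Ce^{-\ga\ep^{-1}}$ for $i=1,2$ and $\ep$ small, hence $V(x)-C_{\ep}(x)\ge\ga\ep^{-1}$ on $\Om_{\ep}$. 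Writing $y_{j}:=y^{(1)}_{\ep,j}$, I would then use
\[
\Psi_{\ep}(x)=\sum_{j=1}^{k}e^{-2(|x-y_{j}|-d)/\ep},
\]
for which $-\ep^{2}\De\Psi_{\ep}\ge-4\Psi_{\ep}$ by a direct computation, so that $-\ep^{2}\De\Psi_{\ep}+(V-C_{\ep})\Psi_{\ep}\ge(\ga\ep^{-1}-4)\Psi_{\ep}\ge0$ on $\Om_{\ep}$ for $\ep$ small. On $\partial\Om_{\ep}$ we have $\Psi_{\ep}\ge1\ge\|\eta_{\ep}\|_{L^{\wq}(\R^{N})}\ge|\eta_{\ep}|$, while both $\eta_{\ep}$ and $\Psi_{\ep}$ tend to $0$ at infinity (recall $u^{(i)}_{\ep}\in H^{1}(\R^{N})$); since $V-C_{\ep}>0$ on $\Om_{\ep}$, the maximum principle on this unbounded domain, applied to $\Psi_{\ep}\pm\eta_{\ep}$, yields $|\eta_{\ep}(x)|\le\Psi_{\ep}(x)$ on $\Om_{\ep}$. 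Restricting to $x\in\R^{N}\setminus\bigcup_{j=1}^{k}B_{2d}(y_{j})$, where $|x-y_{j}|-d\ge d$, gives $|\eta_{\ep}(x)|\le ke^{-2d/\ep}$, which is the asserted exponential bound for $\eta_{\ep}$ (with small constant $2d$).

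Finally, the bound on $|\na\eta_{\ep}|$ follows from this pointwise bound by standard local elliptic estimates: on a ball of fixed radius $\sim d$ centred at a point of $\R^{N}\setminus\bigcup_{j}B_{3d}(y_{j})$ one rescales $\eta_{\ep}(\ep\,\cdot\,+x_{0})$ and applies interior gradient estimates; the coefficient $C_{\ep}-V$ on such a ball is at most polynomially large in $\ep^{-1}$, by the Gaussian decay of the positive solutions $u^{(i)}_{\ep}$, and this polynomial factor is absorbed by the exponential smallness of $\eta_{\ep}$ --- exactly as for the gradient part of Lemma~\ref{estimate about u}. I expect the only genuinely delicate step to be the first one: checking that $C_{\ep}(x)$ is large and negative away from the peaks, so that $\Psi_{\ep}$ is a supersolution and the maximum principle applies. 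Everything after that is routine.
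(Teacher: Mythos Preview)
Your argument is correct and is precisely the comparison/maximum-principle approach the paper has in mind: the paper does not prove this lemma but simply refers back to Lemma~\ref{estimate about u} (itself quoted from \cite{Luo-Peng-Wang}), whose proof is exactly the barrier construction you outline --- positivity of the effective potential $V-C_{\ep}$ away from the peaks, an exponential supersolution, and elliptic gradient estimates. Two minor points worth tightening: the claim that $\eta_{\ep}\to0$ at infinity is not immediate from $u^{(i)}_{\ep}\in H^{1}$ alone, but the maximum principle on the exterior domain goes through anyway because $\eta_{\ep}\in L^{2}(\R^{N})$ (test against a cut-off times the negative part of $\Psi_{\ep}\pm\eta_{\ep}$); and for the gradient step, the assertion that $|C_{\ep}-V|$ is ``polynomially large in $\ep^{-1}$'' needs the lower bound $u^{(i)}_{\ep}\ge\tfrac12\sum_{j}U_{\ep,y^{(i)}_{\ep,j}}$, which follows from the $\|\var^{(i)}_{\ep}\|_{*}$ control in \eqref{*estimate} --- once you have that, the product $|C_{\ep}-V|\,|\eta_{\ep}|$ is still $O(e^{-c/\ep})$ and the rescaled interior estimate gives what you want.
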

\begin{proposition} \label{lem: vanishing limit function}Let $d_{\beta,j}$
be defined as in Proposition \ref{prop: convergence of xi_epsilon}.
Then
\begin{eqnarray*}
d_{\beta,j}=0 &  & \text{for }\beta=1,\cdots,N.\  \ j=1,\cdots,k.
\end{eqnarray*}
 \end{proposition}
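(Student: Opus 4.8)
The plan is to read off the constants $d_{\beta,j}$ from a \emph{local Pohozaev identity for the difference} $u_{\ep}^{(1)}-u_{\ep}^{(2)}$ on the ball $B_{\delta}(y^{(1)}_{\ep,j})$: after the blow-up $x=\ep z+y^{(1)}_{\ep,j}$ this identity collapses, at the relevant order $\ep^{N+1}$, to the homogeneous linear system $\sum_{l=1}^{N}\frac{\pa^{2}V(\xi_{j})}{\pa x_{i}\pa x_{l}}\,d_{l,j}=0$, whereupon the non-degeneracy hypothesis $(V_{2})$ forces $d_{\beta,j}=0$. Concretely, I would apply Proposition~\ref{prop: Pohozaev identity} — the identity \eqref{eq: Pohozaev} — to each of the two positive solutions on $\Om=B_{\delta}(y^{(1)}_{\ep,j})$ and subtract. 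Writing $(u_{\ep}^{(1)})^{2}-(u_{\ep}^{(2)})^{2}=(u_{\ep}^{(1)}+u_{\ep}^{(2)})(u_{\ep}^{(1)}-u_{\ep}^{(2)})$ and dividing by $\|u_{\ep}^{(1)}-u_{\ep}^{(2)}\|_{L^{\wq}(\R^{N})}$ turns the left-hand side into $\int_{B_{\delta}(y^{(1)}_{\ep,j})}\frac{\pa V(x)}{\pa x_{i}}(u_{\ep}^{(1)}+u_{\ep}^{(2)})\eta_{\ep}$. Every boundary integrand on $\pa B_{\delta}(y^{(1)}_{\ep,j})$ also factors the common $u_{\ep}^{(1)}-u_{\ep}^{(2)}=\|u_{\ep}^{(1)}-u_{\ep}^{(2)}\|_{L^{\wq}}\eta_{\ep}$ (using a mean value theorem for $u^{2}\log u^{2}$, whose derivative is harmless since both solutions are $O(e^{-\gamma/\ep})$ there), leaving a product of $|u_{\ep}^{(l)}|+|\na u_{\ep}^{(l)}|=O(e^{-\gamma/\ep})$ (Lemma~\ref{estimate about u} together with the Gaussian form of $U_{\ep,y}$) and $|\eta_{\ep}|+|\na\eta_{\ep}|=O(e^{-\gamma/\ep})$ (Lemma~\ref{minus lemma}), valid at distance $\sim\delta$ from all peaks. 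Hence
\[
\int_{B_{\delta}(y^{(1)}_{\ep,j})}\frac{\pa V(x)}{\pa x_{i}}\,\big(u_{\ep}^{(1)}+u_{\ep}^{(2)}\big)\,\eta_{\ep}=O\big(e^{-\gamma/\ep}\big),\qquad i=1,\dots,N.
\]

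Into this identity I would insert the Taylor expansion $\frac{\pa V(x)}{\pa x_{i}}=\frac{\pa V(y^{(1)}_{\ep,j})}{\pa x_{i}}+\sum_{l=1}^{N}\frac{\pa^{2}V(y^{(1)}_{\ep,j})}{\pa x_{i}\pa x_{l}}(x_{l}-y^{(1)}_{\ep,j,l})+O(|x-y^{(1)}_{\ep,j}|^{2})$ and evaluate through $x=\ep z+y^{(1)}_{\ep,j}$. Since $u_{\ep}^{(l)}=\sum_{t}U_{\ep,y^{(l)}_{\ep,t}}+\var_{\ep}^{(l)}$ with $|\var_{\ep}^{(l)}(x)|\le\|\var_{\ep}^{(l)}\|_{*}\sum_{t}e^{-|x-y^{(l)}_{\ep,t}|^{2}/(2\ep^{2})}$ and $|y^{(1)}_{\ep,j}-y^{(2)}_{\ep,j}|=o(\ep)$ (Proposition~\ref{prop: rate of concentration}), one gets the uniform bound $\big(u_{\ep}^{(1)}+u_{\ep}^{(2)}\big)(\ep z+y^{(1)}_{\ep,j})\le Ce^{-|z|^{2}/4}$ on $B_{\delta/\ep}(0)$ and the pointwise limit $\big(u_{\ep}^{(1)}+u_{\ep}^{(2)}\big)(\ep z+y^{(1)}_{\ep,j})\to2U^{j}(z)$; combined with $\eta_{\ep,j}\to\sum_{\beta}d_{\beta,j}\frac{\pa U^{j}}{\pa x_{\beta}}$ (Proposition~\ref{prop: convergence of xi_epsilon}, with $\|\eta_{\ep,j}\|_{L^{\wq}}\le1$) and dominated convergence, the three terms are: (i) the constant term equals $\frac{\pa V(y^{(1)}_{\ep,j})}{\pa x_{i}}\int_{B_{\delta}}(u_{\ep}^{(1)}+u_{\ep}^{(2)})\eta_{\ep}=o(\ep)\cdot O(\ep^{N})=o(\ep^{N+1})$, using $\na V(y^{(1)}_{\ep,j})=O(|y^{(1)}_{\ep,j}-\xi_{j}|)=o(\ep)$ (from $\na V(\xi_{j})=0$ and \eqref{asymptotic behavior 2.1}) and Cauchy--Schwarz with $\|\eta_{\ep}\|_{L^{2}(B_{\delta})}=O(\ep^{N/2})$; (ii) the linear term equals $-\ep^{N+1}\Big(\int_{\R^{N}}(U^{j})^{2}\Big)\sum_{l=1}^{N}\frac{\pa^{2}V(\xi_{j})}{\pa x_{i}\pa x_{l}}d_{l,j}+o(\ep^{N+1})$, because $\int_{\R^{N}}z_{l}\,U^{j}\frac{\pa U^{j}}{\pa x_{\beta}}=-\tfrac12\delta_{l\beta}\int_{\R^{N}}(U^{j})^{2}$; (iii) the quadratic remainder is $O(\ep^{N+2})$, bounding each Gaussian piece and each $\var_{\ep}^{(l)}$ (the latter via $\|\var_{\ep}^{(l)}\|_{\ep}=O(\ep^{N/2+2})$, established above) in $L^{2}$ against $\|\eta_{\ep}\|_{L^{2}(B_{\delta})}=O(\ep^{N/2})$. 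Substituting these into the displayed identity and dividing by $\ep^{N+1}$ gives $\big(\int_{\R^{N}}(U^{j})^{2}\big)\sum_{l=1}^{N}\frac{\pa^{2}V(\xi_{j})}{\pa x_{i}\pa x_{l}}d_{l,j}=o(1)$ for every $i$; the left-hand side is a fixed constant, so it vanishes, and since $\int_{\R^{N}}(U^{j})^{2}>0$ while $\big(\frac{\pa^{2}V(\xi_{j})}{\pa x_{i}\pa x_{l}}\big)$ is invertible by $(V_{2})$, we conclude $d_{\beta,j}=0$ for all $\beta$; as $j$ was arbitrary, the proposition follows.

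The main obstacle is not the Pohozaev bookkeeping but the decay estimates it rests on: one needs uniform-in-$\ep$ Gaussian-type upper bounds for $u_{\ep}^{(l)}$ and, crucially, for $\eta_{\ep}$ on $B_{\delta}(y^{(1)}_{\ep,j})$, both to pass the blow-up limit inside the integrals (dominated convergence) and to make the remainder terms genuinely $o(\ep^{N+1})$. For $u_{\ep}^{(l)}$ this follows from the explicit profile $U_{\ep,y}$ and the $\|\cdot\|_{*}$-control of $\var_{\ep}^{(l)}$; for $\eta_{\ep}$ it requires a barrier/maximum-principle argument exploiting that the zeroth-order coefficient $V(x)-C_{\ep}(x)$ in \eqref{eq: comparison eq. 1} is large and positive outside $\bigcup_{t}B_{R\ep}(y^{(1)}_{\ep,t})$ (this already underlies Lemma~\ref{minus lemma} and is readily upgraded to the Gaussian decay of $\eta_{\ep,j}$ used here, which in particular gives $\|\eta_{\ep}\|_{L^{2}(B_{\delta})}=O(\ep^{N/2})$). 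The second delicate point is keeping precise track of the powers of $\ep$ so that the Hessian term, sitting at order exactly $\ep^{N+1}$, is cleanly separated from the error — this is where the sharp estimate $\|\var_{\ep}^{(l)}\|_{\ep}=O(\ep^{N/2+2})$ and the oddness identities for the radial profile $U^{j}$ do the decisive work.
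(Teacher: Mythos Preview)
Your proposal is correct and follows essentially the same route as the paper: apply the Pohozaev identity \eqref{eq: Pohozaev} to each of $u_{\ep}^{(1)}$, $u_{\ep}^{(2)}$ on $B_{d}(y^{(1)}_{\ep,j})$, subtract and divide by $\|u_{\ep}^{(1)}-u_{\ep}^{(2)}\|_{L^{\infty}}$, kill the boundary terms via Lemmas~\ref{estimate about u} and \ref{minus lemma}, Taylor-expand $\partial_{x_{i}}V$ around the concentration point, and pass to the blow-up limit using Proposition~\ref{prop: convergence of xi_epsilon} together with $\|\eta_{\ep}\|_{L^{2}(B_{\delta})}=O(\ep^{N/2})$ and $\|\var_{\ep}^{(l)}\|_{\ep}=O(\ep^{N/2+2})$ to isolate the Hessian term at order $\ep^{N+1}$. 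The only cosmetic difference is that you expand at $y^{(1)}_{\ep,j}$ and then use $|\nabla V(y^{(1)}_{\ep,j})|=o(\ep)$, whereas the paper expands directly at $\xi_{j}$; your final invocation of the invertibility of the Hessian from $(V_{2})$ is in fact stated more explicitly than in the paper.
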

 \begin{proof}


Applying (\ref{eq: Pohozaev}) to $u_{\ep}^{(1)}$ and $u_{\ep}^{(2)}$
with $\Om=B_{d}(y^{(1)}_{\ep,j})$, where $d$ is chosen such that $0<d<\min_{i\neq j}|y^{(1)}_{\ep,i}-y^{(1)}_{\ep,j}|$, we have
\begin{equation}\label{3.10}
\begin{aligned} \int_{B_{d}(y^{(1)}_{\ep,j})}&\frac{\pa V(x)}{\pa x_{i}}(u_{\ep}^{(1)}+u_{\ep}^{(2)})\eta_{\ep}\\
=&\int_{\pa B_{d}(y^{(1)}_{\ep,j})}\left(\ep^{2}\langle\nabla(u_{\ep}^{(1)}+u_{\ep}^{(2)}),\nabla\eta_{\ep}\rangle
 +V(x)\langle u_{\ep}^{(1)}+u_{\ep}^{(2)}, \eta_{\ep}\rangle\right)\nu_{i}\\
 &-2\ep^{2}\int_{\pa B_{d}(y^{(1)}_{\ep,j})}\left(\frac{\pa \eta_{\ep}}{\pa \nu}\frac{\pa u_{\ep}^{(1)}}{\pa x_{i}}+\frac{\pa \eta_{\ep}}{\pa x_{i}}\frac{\pa u_{\ep}^{(1)}}{\pa \nu}\right)
    -2\int_{\pa B_{d}(y^{(1)}_{\ep,j})}A_{\ep}(x)\eta_{\ep}(x)\nu_{i},
 \end{aligned}
\end{equation}
 where $1\leq i\leq N$ and
 \[
 A_{\varepsilon}(x)=\frac{(u_{\ep}^{(1)})^{2}\log (u_{\ep}^{(1)})^{2}-(u_{\ep}^{(2)})^{2}\log (u_{\ep}^{(2)})^{2}}{u_{\ep}^{(1)}-u_{\ep}^{(2)}}=4\tilde{u}_{\ep}\log \tilde{u}_{\ep}+2\tilde{u}_{\ep}
 \]
with $\tilde{u}_{\ep}=u_{\ep}^{(1)}+\theta(u_{\ep}^{(1)}-u_{\ep}^{(2)}).$
 By \eqref{big u minus}, we have for $x\in\pa B_{d}(y^{(1)}_{\ep,j})$,
 \[
 \tilde{u}_{\ep}=\sum_{s=1}^{k}U_{\ep,y^{(1)}_{\ep,s}}(x)
+o(1)\sum_{t=1}^{k}\nabla U_{y_{t}}(z_{\ep})+O(|\var_{\ep}^{(1)}|+|\var_{\ep}^{(2)}|)=O(e^{-\frac{\gamma}{\ep}}),
 \]
 Notice that $|\eta_{\ep}|\leq1$, so
 \[
 \int_{\pa B_{d}(y^{(1)}_{\ep,j})}A_{\ep}(x)\eta_{\ep}(x)\nu_{i}=O(e^{-\frac{\gamma}{\ep}}).
 \]
By \eqref{boundary estimate} and Lemma \ref{minus lemma}, we have
 \[
 \int_{\pa B_{d}(y^{(1)}_{\ep,j})}\left(\ep^{2}\langle\nabla(u_{\ep}^{(1)}+u_{\ep}^{(2)}),\nabla\eta_{\ep}\rangle
 +V(x)\langle u_{\ep}^{(1)}+u_{\ep}^{(2)}, \eta_{\ep}\rangle\right)\nu_{i}=O(e^{-\frac{\gamma}{\ep}})
 \]
 and
 \[
 2\ep^{2}\int_{\pa B_{d}(y^{(1)}_{\ep,j})}\left(\frac{\pa \eta_{\ep}}{\pa \nu}\frac{\pa u_{\ep}^{(1)}}{\pa x_{i}}+\frac{\pa \eta_{\ep}}{\pa x_{i}}\frac{\pa u_{\ep}^{(1)}}{\pa \nu}\right)=O(e^{-\frac{\gamma}{\ep}}).
 \]
 So, \eqref{3.10} equivalent to
 \begin{equation}\label{3.101}
   \int_{B_{d}(y^{(1)}_{\ep,j})}\frac{\pa V(x)}{\pa x_{i}}(u_{\ep}^{(1)}+u_{\ep}^{(2)})\eta_{\ep}
   =O(e^{-\frac{\gamma}{\ep}}).
 \end{equation}
As $V(x)$ satisfies $(V_{2})$, for $l=1,\cdots,N$, we have
\begin{equation}\label{3.102}
\begin{aligned}
\int_{B_{d}(y^{(1)}_{\ep,j})}&\frac{\pa V(x)}{\pa x_{i}}(u_{\ep}^{(1)}+u_{\ep}^{(2)})\eta_{\ep}\\=&\int_{B_{d}(y^{(1)}_{\ep,j})}\Big[\big\langle\frac{\nabla^{2} V(\xi_{j})}{\pa x_{i}\pa x_{l}},x_{l}-\xi_{j,l}\big\rangle +O\Big(|x-\xi_{j}|^{2}\Big)\Big] (u_{\ep}^{(1)}+u_{\ep}^{(2)})\eta_{\ep}.  \end{aligned}
\end{equation}
From \eqref{big u minus}, we have
\[
u_{\ep}^{(1)}+u_{\ep}^{(2)}=2\sum_{s=1}^{k}U_{\ep,y^{(1)}_{\ep,s}}(x)
+o(1)\sum_{t=1}^{k}\nabla U_{y_{t}}(z_{\ep,t})+O(|\var_{\ep}^{(1)}|+|\var_{\ep}^{(2)}|).
\]
Then, we find
\[
\begin{aligned}
 \int_{B_{d}(y^{(1)}_{\ep,j})}&\langle\frac{\nabla^{2} V(\xi_{j})}{\pa x_{i}\pa x_{l}},x_{l}-\xi_{j,l}\rangle (u_{\ep}^{(1)}+u_{\ep}^{(2)})\eta_{\ep}\\
  =&2\frac{\nabla^{2} V(\xi_{j})}{\pa x_{i}\pa x_{l}}\int_{B_{d}(y^{(1)}_{\ep,j})}(x_{l}-\xi_{j,l})U_{\ep,y^{(1)}_{\ep,j}}(x)\eta_{\ep}\\
  &+o(1)\frac{\nabla^{2} V(\xi_{j})}{\pa x_{i}\pa x_{l}}\int_{B_{d}(y^{(1)}_{\ep,j})}(x_{l}-\xi_{j,l})\nabla U_{y_{j}}(z_{\ep,j})\eta_{\ep}\\
  &+O\Big(\int_{B_{d}(y^{(1)}_{\ep,j})}(x_{l}-\xi_{j,l})\big(|\var_{\ep}^{(1)}|+|\var_{\ep}^{(2)}|\big)
  \eta_{\ep}\Big)
  +O(e^{-\frac{\gamma}{\ep}}).\end{aligned}
\]
By \eqref{asymptotic behavior 2.1} and Proposition \ref{prop: convergence of xi_epsilon}, we have
\[
\begin{aligned}
\int_{B_{d}(y^{(1)}_{\ep,j})}&(x_{l}-\xi_{j,l})U_{\ep,y^{(1)}_{\ep,j}}(x)\eta_{\ep}\\
  =&\ep^{N+1}\int_{B_{\frac{d}{\ep}}(0)}(z_{l}-\frac{y^{(1)}_{\ep,j,l}-\xi_{j,l}}{\ep})
      U_{\ep,y^{(1)}_{\ep,j}}(\ep z+y^{(1)}_{\ep,j})\Big(\sum_{\beta=1}^{N}d_{\beta,j}\frac{\partial U^{j}}{\partial z_{\beta}}+o(1)\Big)\\
   =&\ep^{N+1}d_{l,j}\int_{B_{\frac{d}{\ep}}(0)}z_{l}U^{j}\frac{\partial U^{j}}{\partial z_{l}}+o(\ep^{N+1}).
  \end{aligned}
\]
Similarly,  as $\|\eta_{\ep}\|=O(\varepsilon^\frac{N}{2})$ and $\|\var_{\ep}\|=O(\varepsilon^{\frac{N}{2}+2})$, we have
\[
\int_{B_{d}(y^{(1)}_{\ep,j})}|x_{l}-\xi_{j,l}|\Big(o(1)|\nabla U_{y_{j}}(z_{\ep,j})|+|\var_{\ep}^{(1)}|+|\var_{\ep}^{(2)}|\Big)\eta_{\ep}=o(\ep^{N+1}),
\]
and
\[
\int_{B_{d}(y^{(1)}_{\ep,j})}|x-\xi_{j}|^{2}(u_{\ep}^{(1)}+u_{\ep}^{(2)})\eta_{\ep}=O(\ep^{N+2}).
\]
Combining above with \eqref{3.101}, \eqref{3.102}, we have
\[
2d_{l,j}\int_{B_{\frac{d}{\ep}}(0)}z_{l}U^{j}\frac{\partial U^{j}}{\partial z_{l}}=o(1).
  \]
Then $d_{l,j}=0$ for $l=1,\cdots,N,\ j=1,\cdots,k,$ since $U^{j}$ is a radially symmetric decreasing function.
 \end{proof}

\begin{proof}[\textbf{Proof of Theorem \ref{thm: uniqueness}}]
  Propositions \ref{prop: convergence of xi_epsilon} and   \ref{lem: vanishing limit function} show that
   \[
   |\eta_{\ep,j}|=o(1),\ \ \ x\in B_{R}(0),
   \] for any $j=1,\cdots,k$, which means \[
    |\eta_{\ep}|=o(1),\ \ \ x\in B_{R\ep}(y^{(1)}_{\ep,j}).
   \]
   On the other hand, by using maximum principle, we can prove \[
|\eta_{\ep}|=o(1),\ \ \ x\in \R^{N}\backslash \bigcup_{j=1}^{k}B_{R\ep}(y^{(1)}_{\ep,j}).
 \]
 we can refer to \cite[Proposition 3.5]{Cao-Li-Luo-2015} for the similar detail proof. Consequently, we get (\ref{3.1}), which contradict to  $\|\eta_{\ep}\|_{\infty}=1.$
  The proof of local uniqueness is completed.
\end{proof}

\noindent\textbf{Acknowledgments}

The authors would like to thank Profs. Shuangjie Peng and Shusen Yan for many useful suggestions during
the preparation of this paper. The first author (Peng Luo) was partially supported by NSFC grants (No.11701204, No.11831009)


\begin{thebibliography}{10}
{\footnotesize

\bibitem{Alves-2018} \textsc{C.O. Alves, D.C. de Morais Filho,}
\emph{Existence and concentration of positive solutions for a Schr\"{o}dinger logarithmic equation.} Z. Angew. Math. Phys.  \textbf{69} (2018), 144, 22 pp.

\bibitem{Alves-2019} \textsc{C.O. Alves, C. Ji,}
\emph{Multiple positive solutions for a Schr\"{o}dinger logarithmic equation.}   arXiv:1901.10329v1.

\bibitem{Ardila-2016} \textsc{A.H. Ardila,}
\emph{Orbital stability of Gausson solutions to logarithmic Schr\"{o}dinger equations.} Electron. J. Differential Equations  \textbf{335} (2016), 1-9.

\bibitem{Ardila-2017} \textsc{A.H. Ardila,}
\emph{Existence and stability of standing waves for nonlinear fractional Schr\"{o}dinger equation with logarithmic
nonlinearity.} Nonlinear Analysis  \textbf{155} (2107),  52-64.

\bibitem{Bahri-1989} \textsc{A. Bahri,}
\emph{Critical Points at Infinity in Some Variational Problems.} Pitman Res. Notes Math. Ser.,
vol. 182, Longman Scientific and Technical, 1989.

\bibitem{Bialynicki-1976} \textsc{I. Bialynicki-Birula, J. Mycielski,}
\emph{Nonlinear wave mechanics.} Ann. Phys  \textbf{100} (1976), 62-93.

\bibitem{Buljan-2003} \textsc{ H. Buljan, A. Siber, M. Soljacic, T. Schwartz, M. Segev, D.N. Christodoulides,} \emph{Incoherent white light solitons in
logarithmically saturable noninstantaneous nonlinear media.}  Phys. Rev. E  68 (2003), Article ID 036607.

\bibitem{Cao-Li-Luo-2015}
 \textsc{D. Cao, S. Li, P. Luo,} Uniqueness of positive bound states with multi-bump for nonlinear Schr\"odinger equations. Calc Var Partial Differential Equations, 2015, 54:4037-4063

\bibitem{Cao-Noussair-Yan-1999} \textsc{D. Cao, E.S. Noussair, S. Yan},
\emph{Solutions with multiple peaks for nonlinear elliptic equations.}
Proc. Royal Soc. Edinburgh \textbf{129}(1999), 235-264.

\bibitem{Cao-Peng-Yan-2019} \textsc{D. Cao, S. Peng, S. Yan}, \emph{Singularly perturbed methods for nonlinear elliptic problems.} Cambridge University Press, 2019.

\bibitem{Cazenave-1983} \textsc{T. Cazenave,} \emph{Stable solutions of the logarithmic Schr\"{o}dinger equation.} Nonlinear. Anal. \textbf{7} (1983), 1127-1140.

\bibitem{Cazenave-1980} \textsc{T. Cazenave, A. Haraux,} \emph{Equations d'\'{e}volution avec non-lin\'{e}arit\'{e} logarithmique.} Ann. Fac. Sci. Toulouse Math.  \textbf{2} (1980), 21-51.

\bibitem{Cazenave-1982} \textsc{T. Cazenave, P.L. Lions,} \emph{Orbital stability of standing waves for some nonlinear Schr\"{o}dinger equations.} Comm. Math.
Phys. \textbf{85} (1982),  549-561.

\bibitem{dAvenia-2014} \textsc{P. d'Avenia, E. Montefusco, M. Squassina,}
\emph{On the logarithmic Schr\"{o}dinger equation}. Commun. Contemp. Math. \textbf{16} (2014), 1350032, 15pp.

\bibitem{dAvenia-2015} \textsc{P. d'Avenia, M. Squassina, M. Zenari,}
\emph{Fractional logarithmic Schr\"{o}dinger equations}. Math. Methods Appl. Sci. \textbf{38} (2015), 5207-5216.


\bibitem{De Martino-2003} \textsc{S. De Martino, M. Falanga, C. Godano, G. Lauro,} \emph{Logarithmic Schr\"{o}dinger-like equation as a model for magma transport.} Europhys Lett. \textbf{63} (2003),  472-475.

\bibitem{Guerrero-2010} \textsc{P. Guerrero, J.L. L\'{o}pez, J. Nieto,} \emph{Global $H^{1}$solvability of the $3D$ logarithmic Schr\"{o}dinger equation.} Nonlinear Anal. Real World Appl., \textbf{11} (2010), 79-87.

\bibitem{Hefter-1985} \textsc{E.F. Hefter,} \emph{Application of the nonlinear Schr\"{o}dinger equation with a logarithmic inhomogeneous term to nuclear physics.} Phys. Rev. \textbf{32} (1985), 1201-1204.

\bibitem{Luo-Peng-Wang} \textsc{P. Luo, S. Peng, C. Wang}
\emph{Uniqueness of positive solutions with Concentration for the Schr\"odinger-Newton problem.} arXiv:1703.00777.

\bibitem{Ji-2016} \textsc{C. Ji, A. Szulkin,}
\emph{A logarithmic Schr\"{o}dinger equation with asymptotic conditions on the potential.} J. Math. Anal. Appl.,
\textbf{437} (2016),  241-254.


\bibitem{Lieb-2001} \textsc{E.H. Lieb, M. Loss,} \emph{Gradute Studies in Mathematics.} AMS, Providence,
Rhodeisland. (2001).

\bibitem{Peng-2018}\textsc{S. Peng, C. Wang, S. Yan} \emph{Construction of solutions via local Pohozaev
identities.} J. Funct. Anal. \textbf{274} (2018), 2606šC2633.

\bibitem{Shuai-2019} \textsc{W. Shuai,} \emph{Multiple solutions for logarithmic Schr\"{o}dinger equations.}

\bibitem{Squassina-2015} \textsc{M. Squassina, A. Szulkin,} \emph{Multiple solutions to logarithmic Schr\"{o}dinger equations with periodic potential.} Calc. Var. Partial Differential Equations \textbf{54} (2015), 585-597.

\bibitem{Squassina-2017} \textsc{M. Squassina, A. Szulkin,} \emph{Erratum to: Multiple solutions to logarithmic Schr\"{o}dinger equations with periodic potential.} Calc. Var. Partial Differential Equations \textbf{56} (2017), 4 pp.

\bibitem{Szulkin-1986} \textsc{A. Szulkin,} \emph{Minimax principles for lower semicontinuous functions and applications to nonlinear boundary value problems.} Ann. Inst. H. Poincar\'{e} Anal. Non Lin\'{e}aire \textbf{3} (1986), 77šC109.

\bibitem{Tanaka-2017} \textsc{K. Tanaka, C. Zhang,} \emph{Multi-bump solutions for logarithmic Schr?dinger equations.} Calc. Var. Partial Differential Equations  \textbf{56} (2017),  35 pp.

\bibitem{Zloshchastiev-2010} \textsc{K.G. Zloshchastiev,} \emph{Logarithmic nonlinearity in theories of quantum gravity: Origin of time and observational consequences.} Grav. Cosmol. \textbf{16}(2010), 288-297.

 \bibitem{Zloshchastiev-2011}\textsc{K.G. Zloshchastiev,} \emph{pontaneous symmetry breaking and mass generation as built-in phenomena in logarithmic nonlinear quantum theory.} Acta Physica Polonica B  \textbf{42} (2011), 261-292.

}



















%




%



%
%
%




%
%
%

















%







\end{thebibliography}
\end{document}